\newcommand{\tfr}{time-frequency representation}
\newcommand{\stft}{short-time Fourier transform}
\newcommand{\Wd}{Wigner distribution}
\newcommand{\modsp}{modulation space}
\newtheorem{theorem}{Theorem}[section]
\newtheorem{corollary}[theorem]{Corollary}
\newtheorem{lemma}[theorem]{Lemma}
\numberwithin{equation}{section}
\newtheorem{proposition}[theorem]{Proposition}
\newtheorem{remark}[theorem]{Remark}
\newcommand{\beqa}{\begin{eqnarray*}}
\newcommand{\eeqa}{\end{eqnarray*}}
\newcommand{\field}[1]{\mathbb{#1}}
\newcommand{\bR}{\field{R}}        
\newcommand{\bZ}{\field{Z}}        
\def\la{\lambda}
 \def\cF{\mathcal{F}}              
 \def\cS{\mathcal{S}}
 \def\cU{\mathcal{U}}
 \def\cC{\mathcal{C}}
\def\a{\aleph}
\def\vgf{V_gf}
\def\rd{\bR^d}
\def\rdd{{\bR^{2d}}}
\def\lrdd{L^2(\rdd)}
\def\zd{\bZ^d}
\def\intrd{\int_{\rd}}
\def\intrdd{\int_{\rdd}}
\def\R{\right)}
\def\<{\left<}
\def\>{\right>}
\def\mv1{M_v^1}
\def\mpq{M^{p,q}}
\def\phas{(x,\o )}
\def\mn{(m,n)}
\def\mn'{(m',n')}
\def\o{\omega}
\def\a{\alpha}
\def\b{\beta}
\def\z{\zeta}
\def\R{\mathbb{R}}
\def\Ren{\mathbb{R}^d}
\def\Renn{\mathbb{R}^{2d}}
\def\sch{\mathcal{S}}
\def\Fur{\mathcal{F}}
\def\f{\varphi}
\def\Sn2{S_{2}(L^{2}(\Ren))}
\def\S1{S_{1}(L^{2}(\Ren))}
\def\sig00{\sigma_{0,0}}
\def\la{\langle}
\def\ra{\rangle}
\begin{document}

\begin{abstract}
We give a complete
characterization of the
continuity of
pseudodifferential operators
with symbols in modulation
spaces $M^{p,q}$, acting on a
given Lebesgue space $L^r$.
Namely, we find the full
range of triples $(p,q,r)$,
for which such a boundedness
occurs. More generally, we
completely characterize the
same problem for operators
acting on Wiener amalgam
space $W(L^r,L^s)$ and even
on modulation spaces
$M^{r,s}$. Finally the action
of pseudodifferential
operators with symbols in
$W(\Fur L^1,L^\infty)$ is
also investigated.
\end{abstract}

\title[Pseudodifferential operators on Lebesgue spaces]{
 Pseudodifferential operators on $L^p$, Wiener
amalgam and modulation spaces}
\author{Elena Cordero and Fabio Nicola}
\address{Department of Mathematics,  University of Torino,
Via Carlo Alberto 10, 10123
Torino, Italy}
\address{Dipartimento di Matematica, Politecnico di
Torino, Corso Duca degli
Abruzzi 24, 10129 Torino,
Italy}
\thanks{The second author was partially supported by
the Progetto MIUR
Cofinanziato 2007 ``Analisi
Armonica''}
\email{elena.cordero@unito.it}
\email{fabio.nicola@polito.it}
\subjclass[2000]{35S05,46E30}
\date{}
\keywords{short-time Fourier
transform, modulation spaces,
Wiener amalgam spaces,
pseudodifferential operators}

\maketitle

\section{Introduction}
A pseudodifferential operator
in $\R^d$ with symbol
$a\in\cS'(\R^d)$ is defined
by the formula
\begin{equation}\label{KN}
a(x,D)f(x)=\intrd a\phas \hat f(\o)
e^{2\pi i x\o}\,d\o,\qquad
f\in\cS(\R^d),
\end{equation}
where
$\hat{f}(x)=\Fur{f}(x)=\int_{\R^d}
e^{-2\pi i x\omega} f(x)\,dx$ is the
Fourier transform of $f$. Hence
$a(x,D)f$ is well-defined as a
temperate distribution.\par
Pseudodifferential operators arise at
least in three different frameworks:
partial differential equations (PDEs),
quantum mechanics and engineering.  In
PDEs they were introduced independently
in \cite{hormander65} and
\cite{kohn-nirenberg65}. Since then,
many symbol classes have been
considered, according to several
applications to PDEs. In particular, a
deep analysis of such operators have
been carried on for H\"ormander's
classes $S^m_{\rho,\delta}$, $m\in\R$,
$0\leq\delta\leq\rho\leq1$, of smooth
functions $a(x,\omega)$ satisfying the
estimates
$|\partial^\alpha_x\partial^\beta_\omega
a(x,\omega)|\leq
C_{\alpha,\beta}(1+|\omega|)^{m+\delta|\alpha|-\rho|\beta|}$.

Bundedness results on $L^p$-based
Sobolev spaces for those operators are
of special interest because they imply
regularity results for the solutions of
the corresponding PDEs.\par The basic
result in this connection is the
boundedness on $L^2$ of operators in
the above classes, with $\delta<\rho$,
which can be achieved by means of the
symbolic calculus. Indeed,
$L^2$-boundedness still holds for
$0\leq\delta=\rho<1$ and even for
symbols in $\cC^{2d+1}(\R^{2d})$, which
is the classical
Calder\'on-Vaillancourt theorem
\cite{calderon71,calderon72}.
Boundedness on $L^p$, $1<p<\infty$,
holds for symbols in $S^0_{1,\delta}$,
$0\leq\delta<1$, but generally fails
for $\rho<1$ and a loss of derivatives
may then occur. We refer the reader to
\cite{hormander-book,stein93,taylor1}
and the references therein for a
detailed account. There are also many
results for symbols which are smooth
and behaves as usual with respect to
$\o$, but less regular with respect to
$x$, e.g. just belonging to some
H\"older class. In this connection see
the books \cite{taylor1,taylor2}, where
important applications to nonlinear
equations are presented as well.\par
The smoothness of the symbol or the
boundedness of all derivatives of the
symbol are not necessary for the
boundedness of pseudodifferential
operators on $L^2(\rd)$. Being
motivated by this argument, many
authors (see, e.g.,
\cite{CM78,Co75,ka76,Na77}) contributed
to investigate the minimal assumption
on the regularity of symbols for the
corresponding operators to be bounded
on $L^2$. In particular, Sugimoto
\cite{Su88} showed that symbols in the
Besov space
$B_{d/2,d/2}^{(\infty,\infty),(1,1)}$
imply $L^2$-boundedness (see also
\cite{sugimoto2} and the references
therein for extensions to the $L^p$
framework). In 1994/95 Sj\"ostrand
introduced a new symbol class, larger
than $S^0_{0,0}$, which was then
recognized to be  the modulation space
$M^{\infty,1}(\R^d)$, first introduced
in time-frequency analysis by
Feichtinger
\cite{feichtinger80,F1,feichtinger83}.
For $1\leq p,q\leq\infty$, the
modulation space $M^{p,q}(\R^d)$
consists of the temperate distributions
$f$ such that the function
$\Fur(\overline{g(\cdot-x)}f)(\omega)$
belongs to the mixed-norm space
$L^{p,q}$ (see \eqref{normamista}
below), where $g$ (the so-called
window) is any non-zero Schwartz
function. The role of the factor
$g(\cdot-x)$ is that of localizing $f$
near the point $x$. Roughly speaking,
distributions in $M^{p,q}$ have
therefore the same local regularity as
a function whose Fourier transform is
in $L^q$, but decay at infinity like a
function in $L^p$ (see \cite{book} and
Section \ref{section2} below for
details). In
\cite{sjostrand94,sjostrand95}
Sj\"ostrand proved that symbols in
$M^{\infty,1}$ give rise to
$L^2$-bounded operators. In view of the
inclusion $\cC^{2d+1}(\R^{2d})\subset
M^{\infty,1}(\R^d)$, this result
represented an important generalization
of the classical Calderon-Vaillancourt
Theorem. Since then, several extensions
appeared, mostly due to Gr\"ochenig and
collaborators. In particular, in
\cite{book,GH99}, symbols in
$M^{\infty,1}$ were proved to produce
bounded operators on all $M^{p,q}$.
Further refinements appeared in
\cite{grochenig-ibero,labate2,Toft04,Toftweight}.\par
We now come more specifically to the
results of the present paper. Examples
show that symbols merely on $L^\infty$
generally do not produce bounded
operator in $L^2$, but some additional
regularity condition should be assumed.
The above Sj\"ostrand's result is just
an instance of this. There is a
 space larger than $M^{\infty,1}$ which
still consists of bounded functions
having locally the same regularity as a
function whose Fourier transform is
integrable. It is  the so-called Wiener
amalgam space $W(\Fur L^1,L^\infty)$,
the sub-space  of temperate
distributions $f$ such that the
function
$\Fur(\overline{g(\cdot-x)}f)(\omega)$
belongs to $L^\infty_x L^1_\omega$ (see
\eqref{normainversa} below). A natural
question which arises is whether
pseudodifferential operators with
symbols  in $W(\Fur L^1,L^\infty)$ are
$L^2$-bounded. Fourier multipliers with
symbols in $W(\Fur L^1,L^\infty)$ are
indeed bounded on $L^2$ and the same
holds, more generally, for symbols in
$W(\Fur L^1,L^\infty)$ of the type
$a(x,\omega)=a_1(x)a_2(\omega)$ (see
Proposition \ref{tensoriale} below).
However, contrary to what these special
cases could suggest, we shall show in
Proposition \ref{t27bis2} that, for
more general symbols in that class,
boundedness on $L^2$ may fail.
\par Another natural question
is which modulation spaces
give rise to bounded
operators on $L^p$,
$p\not=2$. We do not know
results in this connection in
the existent literature. We
give here a complete answer
to this problem, in the
following form (see Corollary
\ref{29-1}, Proposition \ref
{t41bis-1} and Figure
1).\par\medskip {\it Let
$1\leq p,q,r\leq\infty$ such
that
\begin{equation}\label{31-0}
\frac{1}{p}\geq
\left|\frac{1}{r}-\frac{1}{2}\right|+\frac{1}{q'},\qquad
q\leq\min\{r,r'\}.
\end{equation}
Then every symbol $a\in M^{p,q}$ gives
rise to a bounded operator $a(x,D)$ on
$L^r$.\\ Viceversa, if this conclusion
holds true, then the constraints in
\eqref{31-0} must be satisfied.
}\par\medskip\noindent
\begin{center}
\includegraphics[width=7truecm]{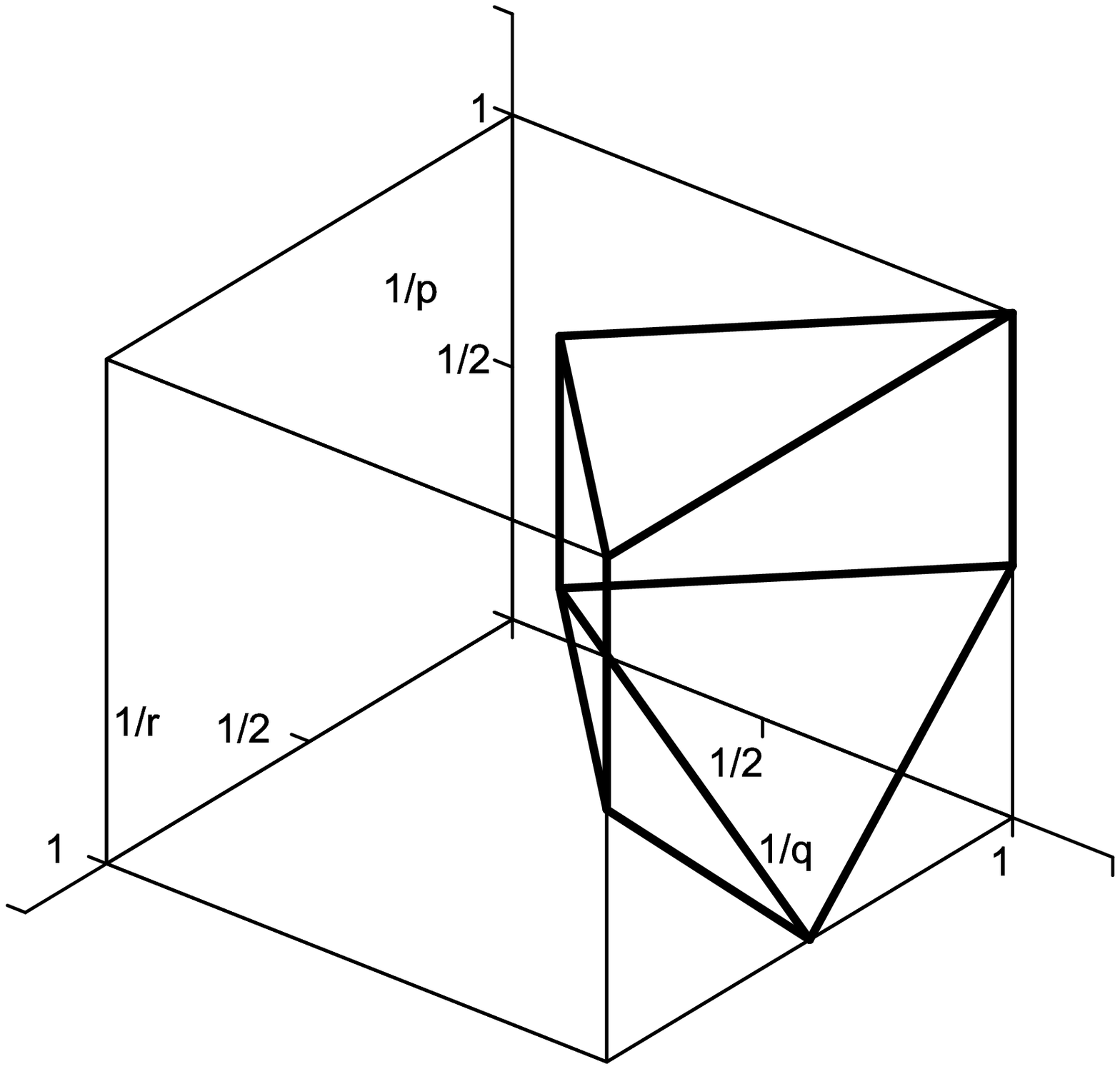}
\end{center}

           \begin{center}{Figure 1:
           \small The triples $(1/r,1/q,1/p)$ inside
            the convex polyhedron are exactly those for which every symbol in $M^{p,q}$ produces a bounded operator on $L^r$.}
           \end{center}

 To avoid
technicalities, we only consider the
action of $a(x,D)$ on Schwartz
functions, so that the definition of
boundedness which is relevant here
requires a small subtlety when
$r=\infty$; see Section
\ref{section4}.\par
 Actually,
we address to the more general problem
of boundedness on the so-called Wiener
amalgam spaces $W(L^p,L^q)$, $1\leq
p,q\leq\infty$, which generalize the
Lebesgue spaces. We recall that a
measurable function $f$ belongs to
$W(L^p,L^q)$ if the following norm
\begin{equation}\label{wienerdis}\|f\|_{W(L^p,L^q)}= \left(\sum_{n\in\zd}\left(\intrd
|f(x)T_n\chi_\mathcal{Q}(x)|^p\right)^{\frac
qp}\right)^{\frac 1q},
\end{equation}
where  $\mathcal{Q}=[0,1)^d$
(with the usual adjustments
if $p=\infty$ or $q=\infty$)
is finite (see \cite{Heil03}
and Section \ref{section2}
below). In particular,
$W(L^p,L^p)=L^p$. For
heuristic purposes, functions
in $W(L^p,L^q)$ may be
regarded as functions which
are locally in $L^p$ and
decay at infinity like a
function in $L^q$. In this
connection, our results read
as follows (see Theorem
\ref{29-0} and Proposition
\ref{t41bis-1}).\par\medskip
 {\it
Let $1\leq p,q,r,s\leq\infty$ such that
\begin{equation}\label{31-1}
\frac{1}{p}\geq
\left|\frac{1}{r}-\frac{1}{2}\right|+\frac{1}{q'},\qquad
q\leq\min\{r,r',s,s'\}.
\end{equation}
Then every symbol $a\in M^{p,q}$ gives
rise to a bounded operator $a(x,D)$ on
$W(L^r,L^s)$.\\
Viceversa, if this conclusion holds
true, then the constraints in
\eqref{31-1} must be satisfied.}\par
\medskip
Finally, we investigate the
boundedness of $a(x,D)$ on
modulation spaces. We wonder
whether there are results
other than those which follow
by interpolation from the
known ones. It turns out that
this is not the case, as
shown by the following result
(see Theorem \ref{1a} and
Proposition
\ref{1-3}).\par\medskip {\it
Let $1\leq p,q,r,s\leq\infty$
such that
\begin{equation}\label{31-2}
 p\leq q',\qquad
q\leq\min\{r,r',s,s'\}.
\end{equation}
Then every symbol $a\in M^{p,q}$ gives
rise to a
bounded operator $a(x,D)$ on $M^{r,s}$.\\
Viceversa, if this conclusion
holds true, then the
constraints in \eqref{31-2}
must be
satisfied.}\par\medskip This
last result generalizes
\cite{GH04}, where the above
necessary conditions were
proved in the case $r=s=2$
(i.e. for
$L^2$-boundedness).\par So
far we considered
pseudodifferential operators
in the form \eqref{KN}, which
is usually referred to as the
Kohn-Nirenberg
correspondence. However, as
shown in Section
\ref{section4}, all the above
results concerning symbols in
modulation spaces apply to
the Weyl quantization as
well, defined in terms of the
the cross-\Wd\ $W(f,g)$
in~\eqref{eq3232} by $\la
L_\sigma f,g\ra=\la
\sigma,W(g,f)\ra,\
f,g\in\sch(\Ren)$, or
directly as
\begin{equation}\label{equiv1}
L_\sigma f(x)=\int e^{2\pi
i(x-y)\omega}\sigma\left(\frac{x+y}{2},\omega\right)
f(y)\,dy\,d\omega.
\end{equation}
Instead, we shall prove in
Section \ref{section6} that,
contrary to what happens for
modulation spaces, Wiener
amalgam spaces $W(\Fur
L^p,L^q)$, for $p\not=q$ are
not invariant under the
action of the mateplectic
operator which switches the
Kohn-Nirenberg and Weyl
symbol of a
pseudodifferential operator,
so that the above mentioned
conunterexamples for symbols
in $W(\Fur L^1,L^\infty)$
will be provided for both
Kohn-Nirenberg and Weyl
operators.

\par\medskip\noindent
The paper is organized as
follows. Section
\ref{section2} is devoted to
preliminary definitions and
properties of the involved
function spaces. In Sections
\ref{section3} and
\ref{section4} we study
sufficient and necessary
conditions, respectively, for
the boundedness on Wiener
amalgam spaces (results for
the Lebesgue spaces are
attained there as a
particular case). Section
\ref{section5} provides
necessary and sufficient
conditions for the
boundedness on modulation
spaces.
 Finally Section
 \ref{section6} is devoted to
 some result for operators with
 symbols in $W(\Fur
 L^p,L^q)$.

\vskip0.5truecm \textbf{Notation.} To
be definite, let us fix some notation
we shall use later on (and have already
used in this Introduction). We define
$xy=x\cdot y$, the scalar product on
$\Ren$. We define by
$\cC^\infty_0(\rd)$ the space of smooth
functions on $\rd$ with compact
support. The Schwartz class is denoted
by  $\sch(\Ren)$, the space of tempered
distributions by  $\sch'(\Ren)$.   We
use the brackets $\la f,g\ra$ to denote
the extension to $\sch (\Ren)\times\sch
'(\Ren)$ of the inner product $\la
f,g\ra=\int f(t){\overline {g(t)}}dt$
on $L^2(\Ren)$.  The Fourier transform
is normalized to be ${\hat
  {f}}(\o)=\Fur f(\o)=\int
f(t)e^{-2\pi i t\o}dt$.
Moreover we set
$f^\ast(x)=f(-x)$. Throughout
the paper, we shall use the
notation $A\lesssim B$,
$A\gtrsim B$ to indicate
$A\leq c B$, $A\geq c B$
respectively, for a suitable
constant $c>0$, whereas $A
\asymp B$ if $A\leq c B$ and
$B\leq k A$, for suitable
$c,k>0$.

\section{Preliminary
results}\label{section2}
\subsection{Function Spaces}\label{FS}
For $1\leq p\leq \infty$, recall the $\cF L^p$ spaces, defined by
$$\cF L^p(\rd)=\{f\in\cS'(\rd)\,:\, \exists \,h\in L^p(\rd),\,\hat h=f\};
$$
they are Banach spaces equipped with the norm
\begin{equation}\label{flp}
\| f\|_{\cF L^p}=\|h\|_{L^p},\quad\mbox{with} \,\hat h=f.
\end{equation}
\par

The   mixed-norm space $L^{p,q}(\rdd)$, $1\leq p,q\leq\infty$, consists of all measurable functions on $\rdd$ such that the norm
\begin{equation}\label{normamista}\|F\|_{L^{p,q}}=\left(\intrd\left(\intrd |F\phas|^p dx\right)^{\frac qp} d\o\right)^{\frac1q}
\end{equation}
(with obvious modifications when $p=\infty$ or $q=\infty$)
is finite.
\par

The function spaces  $L^pL^q(\rdd)$, $1\leq p,q\leq\infty$, consists of all measurable functions on $\rdd$ such that the norm
\begin{equation}\label{normainversa}\|F\|_{L^pL^q}=\left(\intrd\left(\intrd |F\phas|^q d\o\right)^{\frac pq} dx\right)^{\frac1q}
\end{equation}
(with obvious modifications when $p=\infty$ or $q=\infty$)
is finite. Notice that, for $p=q$, we have  $L^pL^p(\rdd)=L^{p,p}(\rdd)=L^p(\rdd)$.

\textbf{Wiener amalgam spaces}. We
briefly recall the definition and the
main properties of Wiener amalgam
spaces. We refer to
\cite{feichtinger80,feichtinger83,fournier-stewart85,Heil03}
for details.\par
 Let $g \in
\cC_0^\infty$ be a test function that
satisfies $\|g\|_{L^2}=1$. We will
refer to $g$ as a window function. Let
$B$ one of the following Banach spaces:
$L^p, \cF L^p$, $L^{p,q}$, $L^pL^q$,
$1\leq p,q\leq \infty$. Let $C$ be one
of the following Banach spaces: $L^p$,
$L^{p,q}$, $L^pL^q$, $1\leq
p,q\leq\infty$. For any given temperate
distribution $f$ which is locally in
$B$ (i.e. $g f\in B$, $\forall
g\in\cC_0^\infty$), we set $f_B(x)=\|
fT_x g\|_B$.

The {\it Wiener amalgam space} $W(B,C)$
with local component $B$ and global
component  $C$ is defined as the space
of all temperate distributions $f$
locally in $B$ such that $f_B\in C$.
Endowed with the norm
$\|f\|_{W(B,C)}=\|f_B\|_C$, $W(B,C)$ is
a Banach space. Moreover, different
choices of $g\in \cC_0^\infty$ generate
the same space and yield equivalent
norms.

If  $B=\Fur L^1$ (the Fourier
algebra),  the space of
admissible windows for the
Wiener amalgam spaces $W(\Fur
L^1,C)$ can be enlarged to
the so-called Feichtinger
algebra $W(\Fur L^1,L^1)$.
Recall  that the Schwartz
class $\sch$
  is dense in $W(\Fur L^1,L^1)$.\par
   The following properties of
Wiener amalgam spaces  will
be frequently used in the
sequel.
\begin{lemma}\label{WA}
  Let $B_i$, $C_i$, $i=1,2,3$, be Banach spaces  such that $W(B_i,C_i)$ are well
  defined. Then,
  \begin{itemize}
  \item[(i)] \emph{Convolution.}
  If $B_1\ast B_2\hookrightarrow B_3$ and $C_1\ast
  C_2\hookrightarrow C_3$, we have
  \begin{equation}\label{conv0}
  W(B_1,C_1)\ast W(B_2,C_2)\hookrightarrow W(B_3,C_3).
  \end{equation}
  \item[(ii)]\emph{Inclusions.} If $B_1 \hookrightarrow B_2$ and $C_1 \hookrightarrow C_2$,
   \begin{equation*}
   W(B_1,C_1)\hookrightarrow W(B_2,C_2).
  \end{equation*}
  \noindent Moreover, the inclusion of $B_1$ into $B_2$ need only hold ``locally'' and the inclusion of $C_1 $ into $C_2$  ``globally''.
   In particular, for $1\leq p_i,q_i\leq\infty$, $i=1,2$, we have
  \begin{equation}\label{lp}
  p_1\geq p_2\,\mbox{and}\,\, q_1\leq q_2\,\Longrightarrow W(L^{p_1},L^{q_1})\hookrightarrow
  W(L^{p_2},L^{q_2}).
  \end{equation}
  \item[(iii)]\emph{Complex interpolation.} For $0<\theta<1$, we
  have
\[
  [W(B_1,C_1),W(B_2,C_2)]_{[\theta]}=W\left([B_1,B_2]_{[\theta]},[C_1,C_2]_{[\theta]}\right),
  \]
if $C_1$ or $C_2$ has absolutely
continuous norm. The same holds if
every Wiener amalgam space is replaced
by the closure of the Schwartz space
into itself.\par\noindent
    \item[(iv)] \emph{Duality.}
    If $B',C'$ are the topological dual spaces of the Banach spaces $B,C$ respectively, and
    the space of test functions $\cC_0^\infty$ is dense in both $B$ and $C$, then
\begin{equation}\label{duality}
W(B,C)'=W(B',C').
\end{equation}
 \item[(v)] \emph{Pointwise products.}
  If $B_1\cdot B_2\hookrightarrow B_3$ and $C_1\cdot
  C_2\hookrightarrow C_3$, we have
  \begin{equation}\label{point0}
  W(B_1,C_1)\cdot W(B_2,C_2)\hookrightarrow W(B_3,C_3).
  \end{equation}
  \end{itemize}
  \end{lemma}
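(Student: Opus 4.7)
The plan is to reduce each of the five assertions to discrete statements about sequences indexed by $\mathbb{Z}^d$, using a standard equivalent description of the Wiener amalgam norm in terms of a bounded uniform partition of unity (BUPU). Concretely, I would fix $\psi\in\cC_0^\infty(\rd)$ supported in a neighborhood of the origin with $\sum_{n\in\zd} T_n\psi\equiv 1$, and show that, for any admissible local/global pair $(B,C)$,
\begin{equation*}
\|f\|_{W(B,C)}\asymp \bigl\|\,(\|T_n\psi\cdot f\|_{B})_{n\in\zd}\,\bigr\|_{C_{d}},
\end{equation*}
where $C_d$ denotes the natural sequence-space version of $C$ (so $L^p$ becomes $\ell^p$, $L^{p,q}$ becomes $\ell^{p,q}$, etc.). The equivalence with the window-based definition $\|f\cdot T_xg\|_B$ is routine once one remarks that only finitely many $T_n\psi$ overlap the support of $T_xg$, and different test windows give equivalent norms. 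Armed with this discretization, items (i)--(v) become standard sequence-space manipulations.

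For (ii) the inclusions are immediate from the discretized norm, and the one-line proof of \eqref{lp} is the corresponding monotonicity of $\ell^p$-norms. For (v) one observes that $T_n\psi\cdot(fh)$ is supported close to $n$, so the hypothesis $B_1\cdot B_2\hookrightarrow B_3$ locally controls the $B_3$-norm of $T_n\psi\cdot(fh)$ by $\|T_n\psi\, f\|_{B_1}\cdot\|T_{n'}\psi\, h\|_{B_2}$ for $n'$ in a fixed finite neighborhood of $n$; summing in $n$ and applying $C_1\cdot C_2\hookrightarrow C_3$ yields \eqref{point0}. For (i) the same idea works but one must account for the spreading of supports under convolution: $T_n\psi\cdot(f*h)=\sum_{m,k}T_n\psi\cdot(T_m\psi f * T_k\psi h)$, and only pairs $(m,k)$ with $m+k$ near $n$ contribute, reducing matters to a discrete convolution in $C_d$ together with a fixed $B$-convolution in each window.

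For the duality statement (iv), the discretization realizes $W(B,C)$ as a retract of $C_d(\bZ^d;B)$ via the coretract $f\mapsto (T_n\psi\cdot f)$ and a reconstruction given by a dual BUPU; duality then transfers from the vector-valued sequence space $C_d(\bZ^d;B)^{\prime}=C_d^{\prime}(\bZ^d;B^{\prime})$, which is valid under the density hypothesis on $\cC_0^\infty$. The interpolation identity (iii) follows in the same way, applying complex interpolation to the retract diagram; here the assumption that $C_1$ or $C_2$ has absolutely continuous norm is exactly what is needed to identify $[C_d^{(1)},C_d^{(2)}]_{[\theta]}$ with the discrete version of $[C_1,C_2]_{[\theta]}$, and likewise for $[B_1,B_2]_{[\theta]}$ in the local coordinate.

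The only genuinely delicate point is (iii): complex interpolation of amalgam spaces requires the absolute-continuity assumption because otherwise the BUPU coretract/retract pair need not commute with $[\cdot,\cdot]_{[\theta]}$ at the level of the global component. I would therefore spend most of the writing verifying that on the closure of $\sch$ the retract is compatible with complex interpolation, so that the standard Calderón identity for the retract carries through. The remaining items are bookkeeping given the discrete reformulation, and I would simply cite Feichtinger's original papers \cite{feichtinger80,feichtinger83} and \cite{fournier-stewart85,Heil03} for the details that fall outside the main line of argument.
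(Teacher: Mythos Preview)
The paper does not prove this lemma at all: it is stated as a collection of well-known properties of Wiener amalgam spaces, with the preceding sentence ``We refer to \cite{feichtinger80,feichtinger83,fournier-stewart85,Heil03} for details'' serving as the only justification. Your BUPU-based sketch is essentially the standard argument from those foundational references (Feichtinger's original papers and the surveys of Fournier--Stewart and Heil), so while your proposal is correct in spirit, it goes well beyond what the paper itself supplies---you are reconstructing the literature rather than matching a proof that appears here.
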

Finally, recall the
following result, proved in
\cite[Proposition
2.7]{CNloc09}.
\begin{lemma}\label{new}
Let $1\leq q\leq
p\leq\infty$. For every
$R>0$, there exists a
constant $C_R>0$ such that,
for every
$f\in\mathcal{S}'(\R^d)$
whose Fourier transform is
supported in any ball of
radius $R$, it turns out
\[
\|f\|_{W(L^p,L^q)}\leq
C_R\|f\|_{q}.
\]
\end{lemma}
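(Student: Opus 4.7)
The estimate is a Bernstein/Nikolskii-type inequality in the Wiener amalgam setting. My approach is to exploit the band-limitation of $f$ through a convolution identity, then transfer the classical Young inequality to Wiener amalgams via Lemma \ref{WA}(i).

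\emph{Step 1 (convolution representation).} Fix a Schwartz function $\psi=\psi_R\in\mathcal{S}(\R^d)$ such that $\hat\psi\equiv 1$ on the closed ball $\overline{B(0,R)}$; for definiteness, one can take $\psi_R(x)=R^d\psi_1(Rx)$ where $\hat{\psi_1}\equiv 1$ on $\overline{B(0,1)}$. Since by hypothesis $\hat f$ is supported in some ball of radius $R$, translation-invariance of the argument lets me reduce to the ball $\overline{B(0,R)}$ (otherwise modulate $\psi$). Then $\hat f\,\hat\psi=\hat f$, hence $f=f\ast\psi$ in $\mathcal{S}'(\R^d)$.

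\emph{Step 2 (choice of exponents).} I want to regard $f\in L^q$ as $f\in W(L^q,L^q)$ and $\psi\in W(L^{p_1},L^1)$ for a suitable $p_1$. Since $q\le p$, the number $p_1\in[1,\infty]$ defined by
\[
\frac{1}{p_1}=1-\frac{1}{q}+\frac{1}{p}
\]
satisfies $1/p_1\in[0,1]$, so it is an admissible Young exponent. Classical Young's inequality then gives the \emph{local} inclusion $L^q\ast L^{p_1}\hookrightarrow L^p$ and the \emph{global} inclusion $L^q\ast L^1\hookrightarrow L^q$.

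\emph{Step 3 (conclusion via Wiener amalgam convolution).} Applying Lemma \ref{WA}(i) with $B_1=C_1=L^q$, $B_2=L^{p_1}$, $C_2=L^1$, $B_3=L^p$, $C_3=L^q$ yields
\[
W(L^q,L^q)\ast W(L^{p_1},L^1)\hookrightarrow W(L^p,L^q).
\]
Since $\psi\in\mathcal{S}(\R^d)\subset W(L^{p_1},L^1)$ with norm controlled by a constant $C_R$ depending only on $R$ (through the rescaling $\psi_R(x)=R^d\psi_1(Rx)$), I conclude
\[
\|f\|_{W(L^p,L^q)}=\|f\ast\psi\|_{W(L^p,L^q)}\lesssim \|f\|_q\,\|\psi\|_{W(L^{p_1},L^1)}\le C_R\|f\|_q.
\]

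\emph{Main obstacle.} There is no deep difficulty; the only point requiring a little care is the dependence of the constant on $R$. This is handled cleanly by scaling from the fixed model $\psi_1$ adapted to the unit ball, so that $\|\psi_R\|_{W(L^{p_1},L^1)}$ is an explicit (polynomial) function of $R$. A minor secondary point is that the ball is not necessarily centered at the origin, but this is absorbed either by a modulation of $\psi$ (which does not alter the Wiener amalgam norm) or by translation invariance of $f\mapsto f$.
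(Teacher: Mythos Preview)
Your proof is correct. The paper itself does not prove this lemma; it simply quotes it from the external reference \cite{CNloc09}, so there is no in-paper argument to compare against. Your route---writing $f=f\ast\psi_R$ via the band-limitation and then invoking the Wiener amalgam convolution relation of Lemma~\ref{WA}(i) with the Young exponent $1/p_1=1-1/q+1/p$---is the standard and cleanest way to obtain this Bernstein/Nikolskii-type estimate. The treatment of a non-centered ball by modulating $\psi_R$ is fine, since $|M_\xi\psi_R|=|\psi_R|$ and the $W(L^{p_1},L^1)$-norm depends only on absolute values; the $R$-dependence of the constant is likewise correctly isolated in $\|\psi_R\|_{W(L^{p_1},L^1)}$.
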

\subsection{ Short-Time Fourier Transform (STFT) and Wigner distribution}
The \tfr s needed for our results are the \emph{\stft } and the \emph{\Wd }.

The \stft\   (STFT)  of a distribution $f\in\sch'(\Ren)$ with
respect to a non-zero window $g\in\sch(\Ren)$ is
\begin{equation}\label{eqi2} V_gf(x,\o)=\la f,M_\o T_x g\ra =\int_{\Ren}
 f(t)\, {\overline {g(t-x)}} \, e^{-2\pi i\o t}\,dt\, ,
 \end{equation}
whereas the  {\it cross-Wigner distribution} $W(f,g)$ of $f,g\in
L^2(\Ren)$ is defined to be
\begin{equation}
  \label{eq3232}
  W(f,g)(x,\o)=\int f(x+\frac{t}2)\overline{g(x-\frac{t}2)} e^{-2\pi
    i\o t}\,dt.
\end{equation}
The quadratic expression $Wf = W(f,f)$ is usually called the Wigner
distribution of $f$.

Both the STFT  $\vgf $ and the \Wd\ $W(f,g)$ are defined on many pairs
of Banach spaces.  For instance, they both map $L^2(\rd ) \times
L^2(\rd )$ into $\lrdd $ and  $\sch(\Ren)\times\sch(\Ren)$ into
$\sch(\Renn)$. Furthermore, they  can be extended  to a map from
$\sch'(\Ren)\times\sch(\Ren)$ into $\sch'(\Renn)$.
We first list some crucial    properties of the STFT  (for proofs, see, e.g.,
\cite[Ch.~3]{book}.

\begin{lemma}\label{propstft}
Let $f,g\in L^2(\Ren)$, then we have

(i)\label{STFTform} $\quad V_gf(x,\o)=(f\cdot T_x {\bar
    g})\,\widehat{}\,(\o) $.

(ii)   (STFT of time-frequency shifts) For $ y,\xi \in \Ren
  $,  we have
  \begin{equation}
    \label{eql2}
   V_{ {g}}(M_\xi T_y{f})(x,\o)  = e^{-2\pi
     i(\o-\xi)y}(V_gf)(x-y,\o-\xi),
  \end{equation}

\end{lemma}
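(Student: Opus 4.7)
Both parts of this lemma are direct consequences of the definition of the STFT, so my plan is simply to unfold the defining integral and rearrange. The approach throughout is to avoid any duality or density argument and work with the integral formula \eqref{eqi2}, which is legitimate for $f,g\in L^2(\Ren)$ by the hypothesis of the lemma.

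For part (i), I would observe that, reading the integrand in \eqref{eqi2} as
\[
f(t)\,\overline{g(t-x)}\,e^{-2\pi i\o t} = \bigl(f(t)\cdot (T_x\bar g)(t)\bigr)\, e^{-2\pi i\o t},
\]
the integral is by definition the Fourier transform of the product $f\cdot T_x\bar g$ evaluated at $\o$. The only point to check is that $f\cdot T_x\bar g\in L^1(\Ren)$ so that the Fourier transform makes pointwise sense: since $f,g\in L^2(\Ren)$ and translation is isometric on $L^2$, this follows from the Cauchy--Schwarz inequality, which is the same estimate that makes the original STFT integral absolutely convergent.

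For part (ii), I would plug $M_\xi T_y f$ into \eqref{eqi2} and move the modulation factor out of the integrand:
\[
V_g(M_\xi T_y f)(x,\o)=\int f(t-y)\,\overline{g(t-x)}\, e^{-2\pi i(\o-\xi)t}\,dt,
\]
then perform the translation $s=t-y$. The change of variable produces $\overline{g(s-(x-y))}$ in the window factor and $e^{-2\pi i(\o-\xi)(s+y)}$ in the character, and factoring out $e^{-2\pi i(\o-\xi)y}$ leaves exactly $V_gf(x-y,\o-\xi)$, which is \eqref{eql2}.

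There is essentially no obstacle here: both identities are formal manipulations that hold for Schwartz pairs and then extend to $L^2\times L^2$ by continuity, since the STFT is continuous from $L^2(\Ren)\times L^2(\Ren)$ into $L^2(\Renn)$ as recalled right before the lemma. The only care needed is to justify Fubini/change of variable at the $L^2$ level, which is routine once one approximates by Schwartz functions and uses the continuity of $V_g$, $T_y$ and $M_\xi$ on $L^2$.
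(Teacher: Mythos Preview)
Your proof is correct and is the standard direct computation. The paper does not supply its own proof of this lemma but simply refers to \cite[Ch.~3]{book}, where the argument is exactly the unfolding of the defining integral that you carry out.
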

The following result was proved in~\cite[Lemma~14.5.1]{book}.
\begin{lemma}\label{STFTSTFT}
Let $\Phi=W (\f , \f ) \in\sch(\Renn)$. Then the STFT of $W(g,
f) $ with respect to the window $\Phi $ is given by
\begin{equation}
  \label{eql4}
{ {V}}_\Phi (W(g,f)) (z, \zeta ) =e^{-2\pi i
  z_2\z_2}{\overline{V_{\f
      }f(z_1+\frac{\z_2}2,z_2-\frac{\z_1}2})}V_{\f
  }g(z_1-\frac{\z_2}2,z_2+\frac{\z_1}2)\, ,
\end{equation}
  where $z=(z_1,z_2)\in\rdd$, $\z=(\z_1,\z_2)\in\rdd$.
\end{lemma}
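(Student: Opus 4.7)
The plan is to reduce the identity to Moyal's formula for the cross-Wigner distribution, namely
\[
\langle W(f_1,g_1),W(f_2,g_2)\rangle_{L^2(\Renn)}=\langle f_1,f_2\rangle\,\overline{\langle g_1,g_2\rangle},
\]
after rewriting both sides as inner products. Since $\Phi=W(\varphi,\varphi)$ is real-valued, one has
\[
V_\Phi(W(g,f))(z,\zeta)=\langle W(g,f),M_\zeta T_z\Phi\rangle_{L^2(\Renn)}.
\]
The core step is then to express $M_\zeta T_z\Phi$ itself as a cross-Wigner distribution. Writing $z=(z_1,z_2)$, $\zeta=(\zeta_1,\zeta_2)\in\Renn$, I would claim
\[
M_\zeta T_z W(\varphi,\varphi)=e^{2\pi i z_2\cdot\zeta_2}\,W\!\left(M_{a_2}T_{a_1}\varphi,\,M_{b_2}T_{b_1}\varphi\right),
\]
for the specific choices
\[
(a_1,a_2)=\left(z_1-\tfrac{\zeta_2}{2},\,z_2+\tfrac{\zeta_1}{2}\right),\qquad (b_1,b_2)=\left(z_1+\tfrac{\zeta_2}{2},\,z_2-\tfrac{\zeta_1}{2}\right).
\]

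To verify this identity I would compute $W(M_{a_2}T_{a_1}\varphi,M_{b_2}T_{b_1}\varphi)(x,\omega)$ directly from \eqref{eq3232}: after the substitutions $u=x-\tfrac{a_1+b_1}{2}$ and $\tau=t-(a_1-b_1)$, the integrand collapses to $\varphi(u+\tau/2)\overline{\varphi}(u-\tau/2)\,e^{-2\pi i(\omega-(a_2+b_2)/2)\cdot\tau}$ times a phase factor independent of $\tau$. Reading off $W(\varphi,\varphi)\!\left(x-\tfrac{a_1+b_1}{2},\,\omega-\tfrac{a_2+b_2}{2}\right)$ from the $\tau$-integral and imposing that the leftover phase reproduce $e^{2\pi i(\zeta_1\cdot x+\zeta_2\cdot\omega)}$ determines $(a,b)$ uniquely and leaves over precisely the extra factor $e^{-2\pi i z_2\cdot\zeta_2}$.

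Given this covariance identity, Moyal's formula immediately yields
\[
V_\Phi(W(g,f))(z,\zeta)=e^{-2\pi i z_2\cdot\zeta_2}\,\langle g,M_{a_2}T_{a_1}\varphi\rangle\,\overline{\langle f,M_{b_2}T_{b_1}\varphi\rangle},
\]
and the two inner products are exactly $V_\varphi g(a_1,a_2)$ and $V_\varphi f(b_1,b_2)$ by the definition \eqref{eqi2}, giving precisely \eqref{eql4}. The main obstacle is the phase bookkeeping in the covariance identity: the clean prefactor $e^{-2\pi i z_2\cdot\zeta_2}$ emerges only after carefully collecting all of the phase contributions from the modulations $M_{a_2}$, $M_{b_2}$ and the two changes of variables, so this step must be carried out with some attention.
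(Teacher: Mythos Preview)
Your argument is correct: the covariance identity $M_\zeta T_z W(\varphi,\varphi)=e^{2\pi i z_2\zeta_2}\,W(M_{a_2}T_{a_1}\varphi,M_{b_2}T_{b_1}\varphi)$ with the parameters you chose does hold (a direct computation of $W(M_{\eta_1}T_{u_1}\varphi,M_{\eta_2}T_{u_2}\varphi)$ gives translation by $((u_1+u_2)/2,(\eta_1+\eta_2)/2)$, modulation by $(\eta_1-\eta_2,-(u_1-u_2))$, and leftover phase $e^{-2\pi i z_2\zeta_2}$, exactly as you claim), and Moyal's formula then finishes the job.

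Note, however, that the paper does not provide its own proof of this lemma; it simply quotes \cite[Lemma~14.5.1]{book}. The argument there proceeds along essentially the same lines as yours: one writes $V_\Phi(W(g,f))(z,\zeta)=\langle W(g,f),M_\zeta T_z W(\varphi,\varphi)\rangle$, identifies $M_\zeta T_z W(\varphi,\varphi)$ (up to a phase) as the cross-Wigner distribution of two time-frequency shifted copies of $\varphi$, and then applies Moyal's identity. So your approach is not merely correct but is in fact the standard one, and coincides with the proof in the reference the paper cites.
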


\textbf{Modulation spaces}. For their basic properties we refer to
\cite{F1,book}. \par
Given a non-zero window $g\in\sch(\Ren)$ and $1\leq p,q\leq
\infty$, the {\it
  modulation space} $M^{p,q}(\Ren)$ consists of all tempered
distributions $f\in\sch'(\Ren)$ such that the STFT, defined in \eqref{eqi2}, fulfills $V_gf\in L^{p,q}(\Renn )$. The norm on $M^{p,q}$ is
$$
\|f\|_{M^{p,q}}=\|V_gf\|_{L^{p,q}}=\left(\int_{\Ren}
  \left(\int_{\Ren}|V_gf(x,\o)|^p\,
    dx\right)^{q/p}d\o\right)^{1/p}  \, .
$$
If $p=q$, we write $M^p$ instead of $M^{p,p}$.

$M^{p,q}$ is a Banach space
whose definition is independent of the choice of the window $g$. Moreover,
if $g \in   M^1 \setminus \{0\}$, then
$\|V_gf \|_{L^{p,q}}$ is an
equivalent norm for $M^{p,q}(\Ren)$.

 Among the properties of \modsp s, we record  that
$M^{2,2}=L^2$, and we list the following results.
\begin{lemma}\label{mo} We have\\
 (i) $M^{p_1,q_1}\hookrightarrow M^{p_2,q_2}$, if
$p_1\leq p_2$ and $q_1\leq q_2$. \\
(ii) If $1\leq p,q<\infty$, then $(M^{p,q})'=M^{p',q'}$.\\
(iii) For $1\leq p,q,
p_i,q_i\leq \infty$, $i=1,2$,
with $q_1<\infty$ or
$q_2<\infty$, and
 $$\frac 1{p}=\frac{1-\theta}{p_1}+\frac\theta{p_2},\quad \frac 1{q}=\frac{1-\theta}{q_1}+\frac\theta{q_2}, $$
 we have
 $$
[M^{p_1,q_1}, M^{p_2,q_2}]_{\theta}=
M^{p,q}.$$ The same holds if every
modulation space is replaced by the
closure of the Schwartz space into
itself.\par\noindent (iv) If $1\leq
p_i,q_i\leq \infty$, $i=1,2,3$, with
$$\frac1{p_1}+\frac1{p_2}=1+\frac1{p_3},\quad
\frac1{q_1}+\frac1{q_2}=\frac1{q_3}
$$
then
$$M^{p_1,q_1}\ast M^{p_2,q_2}\hookrightarrow M^{p_3,q_3}.$$
\end{lemma}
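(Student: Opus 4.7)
The proofs share a common strategy: transport questions about $M^{p,q}$ to questions about the mixed-norm space $L^{p,q}(\rdd)$ via the STFT $V_g$ with a fixed Schwartz window $g$, exploiting the fact that $V_g:M^{p,q}\to L^{p,q}$ is an isometric embedding whose range is a closed, complemented subspace. The two algebraic tools I would use repeatedly are: \emph{(a)} the change-of-window identity $V_{g_1}f=\|g_2\|_2^{-2}\,(V_{g_2}f)\nat(V_{g_1}g_2)$ (with $\nat$ the twisted convolution on $\rdd$), which yields the pointwise majorization $|V_{g_1}f|\le\|g_2\|_2^{-2}\,|V_{g_2}f|\ast|V_{g_1}g_2|$; and \emph{(b)} the Gabor inversion formula $f=\|g\|_2^{-2}\int V_gf(y,\eta)\,M_\eta T_yg\,dy\,d\eta$, which realizes $M^{p,q}$ as a retract of $L^{p,q}$ via the analysis map $V_g$ and the synthesis map $V_g^\ast:F\mapsto\int F(y,\eta)M_\eta T_yg\,dy\,d\eta$.

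Given these tools, parts (i)--(iii) follow by pulling back known properties of $L^{p,q}$. For \textbf{(i)}, the majorization reduces the claim to the statement that convolution with a Schwartz function maps $L^{p_1,q_1}(\rdd)$ into $L^{p_2,q_2}(\rdd)$ whenever $p_1\le p_2$ and $q_1\le q_2$, which is Young's inequality for mixed-norm spaces (any Schwartz function lies in $L^{a,b}(\rdd)$ for all $a,b$, in particular for the exponents $a,b$ required by the Young scaling). For \textbf{(ii)}, the sesquilinear pairing $\langle f,h\rangle=\|g\|_2^{-2}\langle V_gf,V_gh\rangle$, defined initially on Schwartz functions (dense in $M^{p,q}$ when $p,q<\infty$), extends by duality of $L^{p,q}$ and the retract structure to identify $(M^{p,q})'$ with $M^{p',q'}$. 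For \textbf{(iii)}, the classical complex interpolation $[L^{p_1,q_1},L^{p_2,q_2}]_{[\theta]}=L^{p,q}$, which is valid as soon as one of the $q_i$ is finite (so that absolute continuity of the norm is available), is transferred by the retract principle of interpolation theory: applying the section $V_g$ and the retraction $V_g^\ast$ to the interpolation identity on $L^{p,q}$-spaces yields $[M^{p_1,q_1},M^{p_2,q_2}]_{[\theta]}=M^{p,q}$.

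The genuinely new estimate is \textbf{(iv)}, and this is where I would expect the main obstacle. The plan is to begin from the direct computation
\[
V_g(f\ast h)(x,\omega)=\int f(s)\,V_gh(x-s,\omega)\,e^{-2\pi i\omega s}\,ds,
\]
substitute the Gabor expansion $f=\|\phi\|_2^{-2}\int V_\phi f(y,\eta)\,M_\eta T_y\phi\,dy\,d\eta$, and swap the order of integration, so as to express $V_g(f\ast h)(x,\omega)$ as a weighted integral in $(y,\eta)$ of the elementary quantities $V_g(M_\eta T_y\phi\ast h)(x,\omega)$, each of which is directly controlled by a translated and modulated copy of $|V_gh|$. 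Young's inequality in the spatial variable (with $\tfrac{1}{p_1}+\tfrac{1}{p_2}=1+\tfrac{1}{p_3}$) and H\"older's inequality in the frequency variable (with $\tfrac{1}{q_1}+\tfrac{1}{q_2}=\tfrac{1}{q_3}$), combined with Minkowski's integral inequality to bring the outer integration in $(y,\eta)$ inside the $L^{p_3,q_3}$-norm, then produce the bound $\|f\ast h\|_{M^{p_3,q_3}}\lesssim\|f\|_{M^{p_1,q_1}}\|h\|_{M^{p_2,q_2}}$. The delicate points are the correct matching of space versus frequency variables in the exchange of integrations, and a careful treatment of the endpoint exponents where $L^1$ or $L^\infty$ appears.
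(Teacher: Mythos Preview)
The paper does not prove this lemma; it simply quotes the four properties and refers to Feichtinger's original work and Gr\"ochenig's book. Your treatment of (i)--(iii) via the retract structure $V_g:M^{p,q}\to L^{p,q}$ is correct and is exactly the mechanism used in those references, so there is nothing to add there.

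For (iv), however, your proposed route has a genuine gap. After you substitute the Gabor expansion of $f$ and swap integrals you obtain
\[
V_g(M_\eta T_y\phi\ast h)(x,\omega)=e^{-2\pi i(\omega-\eta)y}\int\phi(u)\,e^{-2\pi i(\omega-\eta)u}\,V_gh(x-y-u,\omega)\,du,
\]
and the only pointwise bound available is $|V_g(M_\eta T_y\phi\ast h)(x,\omega)|\le(|\phi|\ast|V_gh(\cdot,\omega)|)(x-y)$. This majorant is \emph{independent of $\eta$}: convolution with $M_\eta T_y\phi$ translates in the space variable but does not shift the frequency variable of $V_gh$, contrary to your claim that one obtains ``a translated and modulated copy of $|V_gh|$''. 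Consequently, when you bring the $(y,\eta)$-integral outside by Minkowski, the $\eta$-integral forces $\int|V_\phi f(y,\eta)|\,d\eta<\infty$, i.e.\ $q_1=1$, and the argument collapses for general $q_1$.

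The standard proof (which is what the references the paper cites actually do) bypasses the Gabor expansion entirely. One chooses the analysing window for $f\ast h$ to be a convolution $g=g_1\ast g_2$ and uses the exact identity
\[
V_{g_1\ast g_2}(f\ast h)(x,\omega)=\bigl(V_{g_1}f(\cdot,\omega)\ast V_{g_2}h(\cdot,\omega)\bigr)(x),
\]
verified by a direct two-line computation. Young's inequality in $x$ with $\tfrac{1}{p_1}+\tfrac{1}{p_2}=1+\tfrac{1}{p_3}$ followed by H\"older in $\omega$ with $\tfrac{1}{q_1}+\tfrac{1}{q_2}=\tfrac{1}{q_3}$ then gives the result in one stroke, with no endpoint difficulties. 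Your intuition about which inequality acts on which variable is correct; the missing ingredient is this factorisation of the window, which is precisely what makes both STFTs appear at the \emph{same} frequency $\omega$ and thus allows H\"older rather than an uncontrolled $\eta$-integration.
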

Modulation spaces and Wiener amalgam spaces are closely related:
for $p=q$, we have
\begin{equation}\label{Wienermod}\|f\|_{W({\cF
L}^p,L^p)}=\left(\int_{\rd}\int_{\rd}|V_{g}f\phas|^p\, m\phas^p
dx\,d\o\right)^{1/p}\asymp  \|f\|_{M^p}.
\end{equation}
More generally, from a comparing the definitions of $\mpq$ and
$W(\cF L^p, L^q)$, it is obvious that $M^{p,q}=\cF W(\cF L^p,
L^q)$.\par
 To prove the boundedness results for pseudodifferential
operators, we shall write their symbols as superposition of
time-frequency shifts. Namely, we shall use the following STFT
inversion formula (see, e.g., \cite{book,GH99}).
\begin{theorem} If $g\in\cS(\rdd)$ and $\|g\|_2=1$, then
\begin{equation}\label{STFTinv}
a=\int_{\R^{4d}} V_g a(\a,\b)M_\b T_\a g d\a d\b.
\end{equation}
If $a\in M^{p,q}$, with $1\leq p,q<\infty$, then this integral converges in the norm of this space.
If $p=\infty$ or $q=\infty$ then this integral converges weakly.
\end{theorem}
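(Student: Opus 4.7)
The plan is to reduce to the $L^2$ inversion $V_g^\ast V_g=\Id$ and then transfer to modulation spaces by density or duality. First, for $g\in\cS(\rdd)$ with $\|g\|_2=1$, Parseval/Moyal's identity shows that $V_g:L^2(\rdd)\to L^2(\R^{4d})$ is an isometry, and its adjoint, computed from the defining integral via Fubini, is the synthesis map $V_g^\ast F=\int_{\R^{4d}}F(\a,\b)M_\b T_\a g\,d\a\,d\b$. Hence $V_g^\ast V_g=\Id$ on $L^2(\rdd)$, so \eqref{STFTinv} holds for $a\in L^2(\rdd)$; when $a\in\cS(\rdd)$, one has $V_g a\in\cS(\R^{4d})$ and the integral converges absolutely.

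Next, I would establish that $V_g^\ast$ extends to a bounded operator from $L^{p,q}(\R^{4d})$ into $M^{p,q}(\rdd)$ for every $1\leq p,q\leq\infty$. Computing $V_g(V_g^\ast F)$ by means of the covariance identity \eqref{eql2}, one finds, up to a unimodular phase, the convolution $|F|\ast|V_g g|$; since $V_g g\in\cS\subset L^1(\R^{4d})$, Young's inequality on the mixed-norm space gives $\|V_g^\ast F\|_{M^{p,q}}\lesssim\|F\|_{L^{p,q}}$. In the range $p,q<\infty$ the truncations $F_N=V_g a\cdot\chi_{\{|(\a,\b)|\leq N\}}$ converge to $V_g a$ in $L^{p,q}$ by dominated convergence, so $V_g^\ast F_N\to V_g^\ast V_g a$ in $M^{p,q}$. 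Since $\cS$ is dense in $M^{p,q}$ and $V_g^\ast V_g=\Id$ holds there, continuity forces $V_g^\ast V_g a=a$, giving the claimed $M^{p,q}$-convergence.

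For the boundary case $p=\infty$ or $q=\infty$, $\cS$ is no longer norm-dense in $M^{p,q}$, so I interpret \eqref{STFTinv} weakly: for $\vp\in\cS(\rdd)$, pairing $a_N$ with $\vp$ and applying Fubini (legal because $V_g a\in L^{p,q}$ and $V_g\vp\in L^{p',q'}$) gives
\[
\la a_N,\vp\ra=\int_{|(\a,\b)|\leq N}V_g a(\a,\b)\overline{V_g\vp(\a,\b)}\,d\a\,d\b,
\]
which by H\"older and dominated convergence tends to $\la V_g a,V_g\vp\ra=\la a,\vp\ra$ as $N\to\infty$, the required weak convergence.

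The main technical step is the Young-type bound for $V_g^\ast$ on the mixed-norm space $L^{p,q}(\R^{4d})$: because of the twisted phase $e^{-2\pi i\a(\o-\b)}$ it is not quite the textbook Young inequality, but the phase is unimodular and can be absorbed into an absolute value, reducing matters to standard convolution estimates. A slicker alternative is to invoke the identification $M^{p,q}=\Fur W(\Fur L^p,L^q)$ and apply the Wiener-amalgam convolution relation of Lemma \ref{WA}(i), which sidesteps mixed-norm Young altogether.
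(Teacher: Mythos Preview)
The paper does not supply a proof of this theorem; it is merely stated as a known inversion formula with references to \cite{book,GH99}. Your sketch is correct and is essentially the standard argument found in those references: isometry of $V_g$ on $L^2$ via Moyal's identity, boundedness of the synthesis operator $V_g^\ast:L^{p,q}\to M^{p,q}$ from the pointwise domination $|V_g(V_g^\ast F)|\leq |F|\ast|V_g g|$ together with Young's inequality, then density for $p,q<\infty$ and a duality/pairing argument for the endpoint cases. There is nothing to compare against in the paper itself.
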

We also recall the following
well-known result (see, e.g.,
\cite{fe89-1,kasso07}).
\begin{lemma}\label{lloc} Let $1\leq p,q\leq
\infty.$\\
(i) For every $u\in
\mathcal{S}'(\rd)$, supported
in a compact set $K\subset
\rd$, we have $u\in
M^{p,q}\Leftrightarrow u\in
\Fur L^q$, and
\begin{equation}\label{loc}
C_K^{-1} \|u\|_{M^{p,q}}\leq
\|u\|_{\cF L^q}\leq C_K
\|u\|_{M^{p,q}},
\end{equation}
where $C_K>0$ depends only on
 $K$.\\
 (ii) For every $u\in \mathcal{S}'(\rd)$,
whose Fourier transform is
supported in a compact set
$K\subset \rd$, we have $u\in
M^{p,q}\Leftrightarrow u\in
L^p$, and
\begin{equation}\label{loc2}
C_K^{-1} \|u\|_{M^{p,q}}\leq\|u\|_{ L^p}\leq C_K\|u\|_{M^{p,q}},
\end{equation}
where $C_K>0$ depends only on
 $K$.\\
\end{lemma}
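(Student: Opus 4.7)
The plan is to exploit the fact that compact support (of $u$ or $\hat u$) forces the STFT $V_g u$ to be supported in a compact set in the corresponding variable, and then compare mixed-norm quantities over bounded sets using H\"older and Young's inequalities.

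For part (i), pick a window $g\in\cC_0^\infty(\rd)\subset M^1$. By Lemma \ref{propstft}(i), $V_g u(x,\omega)=\Fur(u\cdot T_x\bar g)(\omega)$, so if $\operatorname{supp} u\subset K$ then $V_g u(x,\omega)=0$ unless $x\in K_1:=K-\operatorname{supp} g$, which is compact with $|K_1|\asymp|K|$. Writing $V_g u$ on the Fourier side as
\[
V_g u(x,\omega)=e^{-2\pi i x\omega}\int \hat u(\eta)\,\overline{\hat g(\eta-\omega)}\,e^{2\pi i x\eta}\,d\eta,
\]
the trivial bound $\|V_g u(\cdot,\omega)\|_{L_x^p}\leq |K_1|^{1/p}\|V_g u(\cdot,\omega)\|_{L_x^\infty}\leq|K_1|^{1/p}(|\hat u|*|\hat g|^\ast)(\omega)$ and Young's inequality in $\omega$ yield $\|u\|_{M^{p,q}}\lesssim\|\hat g\|_{L^1}|K_1|^{1/p}\|\hat u\|_{L^q}$. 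For the reverse inequality, apply the inversion formula \eqref{STFTinv} and then the Fourier transform to obtain
\[
\hat u(\eta)=\frac{1}{\|g\|_2^2}\int_{K_1\times\rd} V_g u(x,\omega)\,\hat g(\eta-\omega)\,e^{-2\pi i x(\eta-\omega)}\,dx\,d\omega;
\]
H\"older in $x$ (over the compact set $K_1$) gives an inner factor bounded by $|K_1|^{1/p'}\|V_g u(\cdot,\omega)\|_{L_x^p}$, and Young's inequality in $\omega$ then produces $\|\hat u\|_{L^q}\lesssim \|\hat g\|_{L^1}|K_1|^{1/p'}\|V_g u\|_{L^{p,q}}=C_K\|u\|_{M^{p,q}}$.

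For part (ii), the strategy is symmetric. Choose instead a window $g\in M^1$ with $\hat g\in\cC_0^\infty$ (e.g.\ $g=\Fur^{-1}\chi$ with $\chi\in\cC_0^\infty$). Because of the convolution identity $V_g u(\cdot,\omega)=u_\omega*\tilde g$ with $u_\omega=M_{-\omega}u$ and $\tilde g(t)=\overline{g(-t)}$, the assumption $\operatorname{supp}\hat u\subset K$ localises the $\omega$-support of $V_g u$ in the compact set $K_2:=K-\operatorname{supp}\hat g$. Young's inequality then gives $\|V_g u(\cdot,\omega)\|_{L_x^p}\leq\|g\|_{L^1}\|u\|_{L^p}$, and integrating in $\omega$ over $K_2$ yields $\|u\|_{M^{p,q}}\leq |K_2|^{1/q}\|g\|_{L^1}\|u\|_{L^p}$. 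For the converse, insert the same inversion formula into $u(t)$, use the $\omega$-localization to reduce the $\omega$-integral to $K_2$, and apply Minkowski in $t$ together with Young in $x$ and H\"older in $\omega$ to obtain $\|u\|_{L^p}\leq C_K\|u\|_{M^{p,q}}$. Equivalently, part (ii) can be deduced from part (i) via the fundamental identity $|V_g u(x,\omega)|=|V_{\hat g}\hat u(\omega,-x)|$ applied to $\hat u$ in place of $u$, exchanging the roles of time and frequency.

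There is no genuine obstacle here: the entire argument is a bookkeeping exercise in which the compactness of either $\operatorname{supp} u$ or $\operatorname{supp}\hat u$ replaces one of the mixed-norm integrations by a measure-bounded integration, after which the standard convolution/inversion estimates for the STFT do the work. The only point requiring a little care is the choice of the window adapted to each part (compactly supported $g$ for (i), band-limited $g$ for (ii)), which is legitimate since all non-zero windows in $M^1$ produce equivalent norms on $M^{p,q}$.
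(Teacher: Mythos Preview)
The paper does not prove this lemma at all: it is stated as a ``well-known result'' with references to \cite{fe89-1,kasso07}, so there is no in-paper argument to compare against. Your direct proof is correct and is in fact the standard one: exploit that a compactly supported window localizes the STFT in the corresponding variable, then combine H\"older over the resulting compact set with Young's inequality for the remaining convolution structure. Both directions of (i) and the direct argument you give for (ii) (Minkowski in $t$, Young in $x$, H\"older in $\omega$ over $K_2$) go through without difficulty for all $1\leq p,q\leq\infty$.

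One small caveat: your closing remark that ``part (ii) can be deduced from part (i) via the fundamental identity $|V_g u(x,\omega)|=|V_{\hat g}\hat u(\omega,-x)|$'' is not literally correct as a black-box reduction. That identity turns $\|u\|_{M^{p,q}}$ into $\|V_{\hat g}\hat u\|_{L^q_y L^p_\eta}$ (outer integration in the first variable, inner in the second), which is \emph{not} the $M^{q,p}$-norm of $\hat u$, so applying part (i) verbatim to $\hat u$ yields $\|\hat u\|_{M^{p,q}}\asymp\|u\|_{L^q}$ rather than the desired $\|u\|_{M^{p,q}}\asymp\|u\|_{L^p}$. What is true is that the \emph{proof} of (i) transports under this symmetry (the argument works equally well for the reversed mixed-norm order), and that is presumably what you meant. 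Since you also supplied the self-contained argument for (ii), this does not affect the validity of your write-up.
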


\section{Boundedness on Wiener amalgam spaces: sufficient
conditions}\label{section3}
To avoid the fact that
$\cS(\R^d)$ is not dense in
$W(L^r,L^s)$, if $r=\infty$
or $s=\infty$, we use the
following definition of the
boundedness of
pseudodifferential operators
$a(x,D)$ on Wiener amalgam
spaces: we say that $a(x,D)$
is bounded from
$W(L^r,L^s)(\R^d)$ to
$W(L^r,L^s)(\R^d)$ if there
exists a constant $C>0$ such
that $\|a(x,D)
f\|_{W(L^r,L^s)} \le
C\|f\|_{W(L^r,L^s)}$, for all
$f \in \cS(\R^d)$.\par Let us
recall the following result
(see, e.g., \cite[Theorem
14.3.5]{book}).
\begin{theorem}\label{teo-scambio} Let T be a continuous linear operator mapping $\cS(\rd)$ into $\cS'(\rd)$. Then there exist tempered
distributions $K,\sigma,a\in\cS'(\rdd)$, such that T has the following representations: \\
(i) as an integral operator $\la T f,g\ra=\la K,g\otimes \bar{f}\ra$, for $f,g\in\cS(\rd)$;\\
(ii) as a pseudodifferential operator $T=L_{\sigma}$, with Weyl symbol $\sigma$ and $T=a(x,D)$ with Kohn-Nirenberg symbol $a$.\\
The relations between $K,\sigma,a$ are given by
\begin{equation}\label{passaggio}
\sigma=\cF_2\tau_s K,\quad\quad a=\cU\sigma,\quad\quad \sigma=\cU^{-1}a
\end{equation}
where $\cF_2$ is the partial Fourier transform in the second variable,  $\tau_s$ is the symmetric coordiante transformation $\tau_s K(x,y)=K(x+\frac y2,x-\frac y 2)$ and the operator $\cU$ is defined by $\widehat{(\cU\sigma)} (\o_1,\o_2)=e^{\pi i \o_1\o_2}\hat{\sigma}(\o_1,\o_2).$
\end{theorem}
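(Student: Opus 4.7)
The plan is to deduce part (i) from the Schwartz kernel theorem, and then to construct $\sigma$ and $a$ in part (ii) by reading off the symbols from the distributional kernel of $T$. Concretely, since $T:\cS(\rd)\to\cS'(\rd)$ is continuous, the nuclearity of $\cS(\rd)$ yields a unique $K\in\cS'(\rdd)$ such that $\la Tf,g\ra=\la K,g\otimes\bar f\ra$ for all $f,g\in\cS(\rd)$; this is exactly the content of (i).

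For (ii), I would first rewrite the kernel of the Weyl form \eqref{equiv1} and of the Kohn--Nirenberg form \eqref{KN} in terms of a partial Fourier transform. After the change of variables $u=(x+y)/2$, $v=x-y$ in \eqref{equiv1}, the distribution kernel $K_\sigma$ of $L_\sigma$ satisfies $K_\sigma(u+v/2,u-v/2)=\cF_2^{-1}\sigma(u,v)$; hence setting $\sigma:=\cF_2\tau_s K$, with $\tau_s K(x,y)=K(x+y/2,x-y/2)$, one has $L_\sigma=T$ by construction. Likewise, the kernel of $a(x,D)$ from \eqref{KN} is $K_a(x,y)=\cF_2^{-1}[a(x,\cdot)](x-y)$, and inverting this identity defines an $a\in\cS'(\rdd)$ with $a(x,D)=T$. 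The uniqueness of $K$ given by (i) forces these two constructions to provide the only symbols having $T$ as the associated operator.

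The identity $a=\cU\sigma$ would then be checked by computing the full Fourier transform of both symbols in terms of the common kernel $K$. Performing the integration in $\o$ collapses the inner exponentials into delta distributions and produces $\hat a(\o_1,\o_2)=\int K(x,x+\o_2)e^{-2\pi ix\o_1}\,dx$, while the analogous calculation for $\sigma$, followed by the affine shift $x\mapsto x+\o_2/2$ in the remaining integral, yields $\hat\sigma(\o_1,\o_2)=e^{-\pi i\o_1\o_2}\hat a(\o_1,\o_2)$. This is exactly $\hat a=e^{\pi i\o_1\o_2}\hat\sigma$, namely $a=\cU\sigma$, and inverting $\cU$ gives $\sigma=\cU^{-1}a$.

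The main technical obstacle will be the fact that none of the integrals written above is absolutely convergent for a general kernel $K\in\cS'(\rdd)$. The way around this is to notice that each ingredient --- partial Fourier transform in a single variable, pullback along the affine map $\tau_s$, and multiplication by the unimodular symbol $e^{\pi i\o_1\o_2}$ --- is a continuous automorphism of $\cS(\rdd)$ and therefore extends, by duality, to a continuous automorphism of $\cS'(\rdd)$. Density of $\cS(\rdd)$ in $\cS'(\rdd)$ for the weak-$*$ topology then reduces the whole proof to the Schwartz-function computation sketched above, making the argument rigorous.
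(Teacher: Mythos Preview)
The paper does not actually prove this theorem: it is introduced with ``Let us recall the following result (see, e.g., \cite[Theorem~14.3.5]{book})'' and is quoted verbatim from Gr\"ochenig's text without argument. Your sketch is the standard proof one finds there, and the computations you indicate are correct: the Schwartz kernel theorem gives (i), the formulas $K_\sigma(x,y)=\cF_2^{-1}\sigma\big(\tfrac{x+y}{2},x-y\big)$ and $K_a(x,y)=\cF_2^{-1}a(x,x-y)$ follow directly from \eqref{equiv1} and \eqref{KN}, and your Fourier-side comparison yields $\hat a(\o_1,\o_2)=e^{\pi i\o_1\o_2}\hat\sigma(\o_1,\o_2)$ as required. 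Your remark that $\cF_2$, $\tau_s$ and multiplication by $e^{\pi i\o_1\o_2}$ are topological automorphisms of $\cS(\rdd)$, hence of $\cS'(\rdd)$ by transposition, is exactly what makes the formal manipulations rigorous, so there is no gap.
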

\begin{remark}\rm\label{oss-scambio}
Since $a(x,D)=L_{\cU^{-1}a}$,
a straightforward
modification of
\cite[Corollary 14.5.5]{book}
shows that the modulation
spaces $M^{p,q}$, $1\leq
p,q\leq\infty$, are invariant
under $\cU^{-1}$, so that
boundedness results for
pseudodifferential operators
with symbols in modulation
spaces can be obtained using
either the Weyl or the
Kohn-Nirenberg form. In the
sequel, we shall adopt the
operator form which is more
convenient.
\end{remark}
We also need the following
useful remark.

\begin{remark}\label{densglobal}\rm
Observe that, if $1\leq r<\infty$, then
$$\left(\overline{\cS}^{W(L^r,L^\infty)}\right)'=W(L^{r'},L^1),
$$
see, e.g., \cite[Theorem 2.8]{hans85}.
\end{remark}
\begin{theorem}\label{bound1}
If $\sigma\in M^{\infty,1}(\rdd)$, then the Weyl operator $L_\sigma$  is bounded on \\ $W(L^2,L^s)(\rd)$, for every $1\leq s\leq\infty$, with the uniform estimate
$$\|L_\sigma f\|_{W(L^2,L^s)}\lesssim \|\sigma\|_{ M^{\infty,1}}\|f\|_{W(L^2,L^s)}.$$
\end{theorem}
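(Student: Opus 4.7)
My plan is to reduce the statement, via conjugation with the Fourier transform $\cF$, to the boundedness on the modulation space $M^{2,s}$, which is already known from the classical Sj\"ostrand--Gr\"ochenig theorem.

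First, I would use the identification $W(L^2,L^s)(\R^d)=\cF^{-1}M^{2,s}(\R^d)$ with equivalent norms. This is the case $p=2$ of the relation $M^{p,q}=\cF W(\cF L^p,L^q)$ recorded in Section~\ref{section2}, once one notes that $\cF L^2=L^2$ by Plancherel. Consequently, for every $f\in\cS(\R^d)$ the norm estimate to be proved is equivalent to
\[
\|\cF L_\sigma f\|_{M^{2,s}}=\|(\cF L_\sigma\cF^{-1})(\cF f)\|_{M^{2,s}}\lesssim\|\sigma\|_{M^{\infty,1}}\|\cF f\|_{M^{2,s}}.
\]

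Next, I would exploit that $\cF$ is the metaplectic operator associated with the symplectic rotation $J=\begin{pmatrix}0&I\\-I&0\end{pmatrix}$; by the well-known symplectic covariance of the Weyl calculus one has $\cF L_\sigma\cF^{-1}=L_{\tilde\sigma}$ with $\tilde\sigma=\sigma\circ J^{-1}$, i.e.\ $\tilde\sigma(x,\omega)=\sigma(-\omega,x)$. A direct change of variable in the defining integral of the STFT yields
\[
V_\Phi\tilde\sigma(X,\Xi)=V_{\Phi\circ J}\sigma(J^{-1}X,J^{-1}\Xi),\qquad X,\Xi\in\R^{2d}.
\]
Taking the supremum in $X$ and integrating in $\Xi$, the unimodular substitutions $X'=J^{-1}X$, $\Xi'=J^{-1}\Xi$ and the window-independence of the modulation space norm give $\|\tilde\sigma\|_{M^{\infty,1}}\asymp\|\sigma\|_{M^{\infty,1}}$.

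Finally, the Sj\"ostrand--Gr\"ochenig theorem (see \cite{sjostrand94,GH99} and the results recalled in the Introduction) ensures that any $\tau\in M^{\infty,1}(\R^{2d})$ defines a Weyl operator bounded on every modulation space $M^{p,q}(\R^d)$, with operator norm controlled by $\|\tau\|_{M^{\infty,1}}$. Applying this on $M^{2,s}$ with $\tau=\tilde\sigma$ and chaining the estimates above,
\[
\|L_\sigma f\|_{W(L^2,L^s)}\asymp\|L_{\tilde\sigma}\cF f\|_{M^{2,s}}\lesssim\|\tilde\sigma\|_{M^{\infty,1}}\|\cF f\|_{M^{2,s}}\asymp\|\sigma\|_{M^{\infty,1}}\|f\|_{W(L^2,L^s)},
\]
as required. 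The delicate point is the invariance of $M^{\infty,1}$ under the Fourier rotation $J$: in general, for $p\ne q$ the space $M^{p,q}$ is \emph{not} invariant under arbitrary metaplectic operators. What makes the argument work here is that $J$ is unimodular and swaps the two blocks of phase-space variables symmetrically, so both the outer $L^\infty$ and the inner $L^1$ layers defining the $M^{\infty,1}$-norm are preserved, up to the harmless change of window $\Phi\mapsto\Phi\circ J$.
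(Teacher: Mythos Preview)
Your argument is correct, and it takes a genuinely different route from the paper.

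The paper proves Theorem~\ref{bound1} directly via duality: writing $\langle L_\sigma f,g\rangle=\langle\sigma,W(g,f)\rangle$, it uses the explicit formula for $V_\Phi W(g,f)$ (Lemma~\ref{STFTSTFT}) together with Cauchy--Schwarz and Parseval in the frequency variable to establish the bilinear estimate
\[
\|W(g,f)\|_{M^{1,\infty}}\lesssim\|f\|_{W(L^2,L^s)}\|g\|_{W(L^2,L^{s'})},
\]
which then yields the result by H\"older duality between $M^{\infty,1}$ and $M^{1,\infty}$. Your approach instead exploits the identity $W(L^2,L^s)=\cF^{-1}M^{2,s}$ and symplectic covariance to transplant the problem to the modulation-space setting, where the Sj\"ostrand--Gr\"ochenig theorem applies directly. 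This is shorter and more conceptual; the paper's argument, in contrast, is self-contained and yields the bilinear Wigner estimate above as a byproduct of independent interest. It is worth noting that the authors themselves use precisely your conjugation trick later, in the proof of Proposition~\ref{1-2}.

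One small remark on your closing comment. The ``delicate point'' you flag is less delicate than you suggest: the map $\sigma\mapsto\sigma\circ J^{-1}$ is not a metaplectic operator acting on $\sigma$, but simply precomposition with an invertible linear map of $\R^{2d}$. As your own STFT computation shows (and as the paper states in the proof of Proposition~\ref{1-2}), precomposition with \emph{any} invertible linear map $A$ sends $M^{p,q}(\R^{2d})$ to itself, for \emph{all} $p,q$, with norm depending only on $|\det A|$ and the window change; no special feature of $J$ is needed. The failure of metaplectic invariance for $M^{p,q}$ with $p\ne q$ concerns operators such as chirp multiplication, which are not of this form.
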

\begin{proof}
Let us show the estimate
$$|\la L_\sigma f,g\ra |\lesssim \|\sigma\|_{M^{\infty,1}}\|f\|_{W(L^2,L^s)}\|g\|_{W(L^2,L^{s'})},\quad \forall f,g\in\cS(\R^d),
$$
where $1/s+1/{s'}=1$. This will give at
once the desired result if $s>1$,
whereas the case $s=1$ follows by Remark \ref{densglobal}.\par Let $\f\in
\mathcal{C}^\infty_0 (\rd )$ and set
$\Phi=W(\f,\f)\in\sch(\Renn)$. By the
definition of the Weyl operator via
cross-Wigner distribution and
H\"older's inequality,
\begin{align*}|\la L_\sigma f,g\ra |&=|\la \sigma, W(g,f)\ra|=|\la V_{\Phi}\sigma, V_{\Phi}W(g,f)\ra|\leq \|V_{\Phi}\sigma\|_{L^{\infty,1}}\|V_{\Phi}W(g,f)\|_{L^{1,\infty}}\\
&\asymp \|\sigma\|_{M^{\infty,1}}\|W(g,f)\|_{M^{1,\infty}}.
\end{align*}
Then, the result is proved if we show that $\|W(g,f)\|_{M^{1,\infty}}\lesssim \|f\|_{W(L^2,L^s)}\|g\|_{W(L^2,L^{s'})}$.
  If $\zeta = (\z_1,\z_2)\in \Renn$, we write $\tilde{\zeta } = (\zeta _2,-\zeta _1)$. Then
Lemma~\ref{STFTSTFT} says that
$$|{ {V}}_\Phi (W(g,f))(z,\zeta)| =| V_\f f(z
+\tfrac{\tilde{\z }}{2})| \,  |V_\f g(z - \tfrac{\tilde{\z }}{2})| \,.$$
Consequently
\begin{equation*}
  \|W(g,f)\|_{M^{1,\infty}} \asymp  \sup_{\z\in\rdd} \intrdd | V_\f f(z +\tfrac{\tilde{\z }}{2})| \,  |V_\f g(z -\tfrac{\tilde{\z }}{2})|   \, dz.
\end{equation*}
We set $\pi(\tilde{\z })=M_{\tilde{\z_2 }}T_{\tilde{\z_1 }}$. In what follows, we make  the change of variables $z \mapsto z-\tilde{\zeta } /2$, and use Lemma \ref{propstft}, (i) and (ii), the Cauchy-Schwarz's and Parseval's inequalities with respect to the $z_2$ variable, so that
\begin{align*}
\|W(g,f)\|_{M^{1,\infty}}
 &\asymp \sup_{\z\in\rdd} \intrdd | V_\f f(z)| \,  |V_\f g(z -\tilde{\z })|   \, dz\\
&=\sup_{\z\in\rdd} \intrdd | V_\f f(z)| \,  |V_\f (\pi(\tilde{\z })g)(z )|   \, dz \\
&=\sup_{\z\in\rdd} \intrd\intrd | \widehat{fT_{z_1}\f}(z_2)|\,|\widehat{\pi(\tilde{\z })gT_{z_1}\f}(z_2)| \,  dz_1 dz_2 \\
&\lesssim \sup_{\z\in\rdd} \intrd  \|fT_{z_1}\f\|_2\,\|\pi(\tilde{\z })gT_{z_1}\f\|_2 \,  dz_1 \\
&\lesssim \sup_{\z\in\rdd} \|f\|_{W(L^2,L^s)}\| \pi(\tilde{\z })g\|_{W(L^2,L^{s'})}\\
&=\|f\|_{W(L^2,L^s)}\|g\|_{W(L^2,L^{s'})},
\end{align*}
where we have used H\"older's inequality  in the last-but-one  step and the invariance of the $W(L^2,L^s)$ spaces under time-frequency shifts $\pi(\tilde{\z })$ in the last one.
\end{proof}

If we choose symbols with a stronger
decay, namely symbols $a\in
M^{p,1}\subset M^{\infty,1}$, $1\leq
p\leq 2$, then the corresponding
pseudodifferential operators $a(x,D)$
are bounded on every Wiener amalgam
spaces $W(L^r,L^s)$, as shown in the
following result.

\begin{theorem}\label{bound2}
If $a\in M^{p,1}(\rdd)$, $1\leq p\leq
2$, then the operator $a(x,D)$  is
bounded on $W(L^r,L^s)(\rd)$, for every
$1\leq r, s\leq\infty$, with the
uniform estimate
$$\| a(x,D)f\|_{W(L^r,L^s)}\lesssim \|a\|_{ M^{p,1}}\|f\|_{W(L^r,L^s)}.$$
\end{theorem}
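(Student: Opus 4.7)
The plan is to decompose the symbol via the STFT inversion formula \eqref{STFTinv} into a continuous superposition of time-frequency shifts of a window, and to estimate the resulting elementary pseudodifferential operators. I choose a window $g \in \cS(\R^{2d})$ of tensor-product form $g(x,\omega)=g_1(x)g_2(\omega)$ with $g_1 \in \cC_0^\infty(\R^d)$ and $g_2 \in \cS(\R^d)$ such that $\cF^{-1} g_2 \in \cC_0^\infty(\R^d)$. Writing $\alpha=(\alpha_1,\alpha_2)$ and $\beta=(\beta_1,\beta_2)$, the formula gives
\begin{equation*}
a(x,D) f = \int_{\R^{4d}} V_g a(\alpha,\beta)\, T_{\alpha,\beta} f\, d\alpha\, d\beta,\qquad T_{\alpha,\beta} := (M_\beta T_\alpha g)(x,D).
\end{equation*}
The tensor structure makes the symbol factor as $(M_{\beta_1}T_{\alpha_1}g_1)(x)(M_{\beta_2}T_{\alpha_2}g_2)(\omega)$, so that $T_{\alpha,\beta}$ splits as pointwise multiplication by the space-compactly-supported $M_{\beta_1}T_{\alpha_1}g_1$ followed by the Fourier multiplier with symbol $M_{\beta_2}T_{\alpha_2}g_2$, i.e.\ convolution with a time-frequency shift of the compactly supported function $\cF^{-1}g_2$.

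For the endpoint $p=1$, both factors are bounded on $W(L^r,L^s)$ uniformly in $(\alpha,\beta)$: the multiplication via the $L^\infty$-bound on $M_{\beta_1}T_{\alpha_1}g_1$, the convolution via Young's inequality in Wiener amalgam form (Lemma \ref{WA}(i)). The triangle inequality then yields
\begin{equation*}
\|a(x,D) f\|_{W(L^r,L^s)} \lesssim \int |V_g a(\alpha,\beta)|\,d\alpha\,d\beta \cdot \|f\|_{W(L^r,L^s)} = \|a\|_{M^1}\,\|f\|_{W(L^r,L^s)}.
\end{equation*}
For the opposite endpoint $p=2$, I would reduce to the Weyl quantisation via Remark \ref{oss-scambio} and proceed by duality as in the proof of Theorem \ref{bound1}: from $|\langle L_\sigma f,h\rangle| \le \|\sigma\|_{M^{2,1}} \|W(h,f)\|_{M^{2,\infty}}$ one is left with estimating $\|W(h,f)\|_{M^{2,\infty}}$ in terms of $\|f\|_{W(L^r,L^s)}\|h\|_{W(L^r,L^{s'})}$ via Lemma \ref{STFTSTFT}, using a Cauchy--Schwarz in $z_2$, a Hausdorff--Young step (admissible precisely in the range $p\le 2$), and H\"older in $z_1$. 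The intermediate cases $1<p<2$ then follow by complex interpolation of the linear map $a\mapsto a(x,D)$: by Lemma \ref{mo}(iii), $[M^1,M^{2,1}]_\theta = M^{p,1}$ with $1/p = 1-\theta/2$, and both endpoints give $\cL(W(L^r,L^s))$-valued bounded maps. The limit cases $r,s\in\{1,\infty\}$ are dealt with by a density argument coupled with Remark \ref{densglobal}.

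I expect the main obstacle to be the $p=2$ step. In Theorem \ref{bound1} the $M^{1,\infty}$-estimate for $W(h,f)$ used Parseval \emph{exactly} in the $z_2$-variable, allowing a clean reduction to local $L^2$-norms; for the $M^{2,\infty}$-norm one must instead combine H\"older with Hausdorff--Young, and the arithmetic of the resulting exponents has to match the Wiener amalgam norms on the right-hand side simultaneously for every admissible $r,s$. Organising this H\"older/Hausdorff--Young calculus so as to recover $\|f\|_{W(L^r,L^s)}\|h\|_{W(L^r,L^{s'})}$ in full generality, without losing in the outer (global) component, is the crucial technical point.
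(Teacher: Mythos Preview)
Your $p=1$ step is fine, but the gap is exactly where you flag it: the $p=2$ endpoint. If you literally use Cauchy--Schwarz in $z_2$ on $\int |V_\varphi f|^2|V_\varphi(\pi h)|^2\,dz_2$, you obtain $\|F\|_4\|H\|_4$ and, after Hausdorff--Young, local $L^{4/3}$-norms of both $f$ and $h$; the Wiener amalgam inclusions then force $4/3\le r\le 4$, so the estimate $\|W(h,f)\|_{M^{2,\infty}}\lesssim\|f\|_{W(L^r,L^s)}\|h\|_{W(L^{r'},L^{s'})}$ is not recovered for all $r$. (The parenthetical ``admissible precisely in the range $p\le 2$'' mixes up the modulation-space index $p$ of the symbol with the Hausdorff--Young exponent, which here is applied to $f$ and $h$, not to $a$.) One can in fact rescue the argument by replacing Cauchy--Schwarz with a general H\"older split in $z_2$ with exponents depending on $r$, and checking that suitable $(a,a')$ and $(b,b')$ always exist; but this is not what you wrote, and it is exactly the ``exponent arithmetic'' you left open.

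The paper sidesteps all of this. Since $M^{p,1}\hookrightarrow M^{2,1}$ for $p\le 2$, it suffices to treat $p=2$, and for this the paper proves a pointwise kernel domination: writing the integral kernel as $K(x,y)=(\cF_2 a)(x,y-x)$, it shows $|K(x,y)|\le F(x-y)$ for a function $F\in L^1(\R^d)$ with $\|F\|_1\lesssim\|a\|_{M^{2,1}}$. Then $|a(x,D)f|\le F\ast|f|$, and boundedness on \emph{every} $W(L^r,L^s)$ follows at once from the convolution relation $L^1\ast W(L^r,L^s)\hookrightarrow W(L^r,L^s)$ in Lemma~\ref{WA}(i). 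The domination is obtained by inserting the inversion formula \eqref{STFTinv} into $K$, applying Cauchy--Schwarz in $\alpha_2$ and then in the remaining spatial variable. This single estimate handles all $(r,s)$ simultaneously, so neither your separate $p=1$ argument nor the interpolation step is needed.
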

\begin{proof}
By the inclusion relations for modulation spaces we can just
consider the case  $a\in M^{2,1}$. We shall show that the integral
kernel
\[
K(x,y)=(\Fur_2 a)(x,y-x)
\]
($\Fur_2$  stands for the partial Fourier transform with respect to the
second variable) of $a(x,D)$ can be controlled from above by
$|K(x,y)|\leq F(x-y)$, where  $F$ is a positive function in
$L^1(\rd)$. If it is so, then $|a(x,D)f(x)|\leq (F\ast |f|)(x)$
 and the convolution relations for Wiener amalgam spaces in Lemma
 \ref{WA} (i) give the desired result.\par
 We use the inversion formula  \eqref{STFTinv} for the
symbol $a$. Namely, for any window $g\in\cS(\R^{2d})$, with
$\|g\|_2=1$, we have
\begin{equation}\label{istft}
a(x,\o)=\int_{\R^{4d}} (V_g
a) (\alpha,\beta) (M_\beta
T_\alpha
g)(x,\o)\,d\alpha\,d\beta.
\end{equation}
Hence, if
$\alpha=(\alpha_1,\alpha_2)$,
$\beta=(\beta_1,\beta_2)$, it
turns out
\[
K(x,y)=\int_{\R^{4d}} e^{2\pi i \a_2\b_2}(V_g a)
(\alpha_1,\a_2,\beta_1,\b_2)
M_{(\b_1,\a_2)}T_{(\a_1,-\b_2)}(\Fur_2^{-1} g)(x,x-y)d\alpha_1
d\a_2d\beta_1 d\b_2.
\]
Setting
\[
H(\alpha_1,t;\beta_1,\b_2)=\int_{\R^d}(V_g a) (\alpha,\beta)
e^{2\pi i(t-\b_2)\alpha_2}\,d\alpha_2,
\]
and using the Cauchy-Schwarz's inequality with respect to the $\a_2$ variable, we  obtain
\begin{align*}
|K(x,y)|&\leq \int_{\R^{3d}}
|H(\alpha_1,x-y+\b_2;\beta_1,\b_2)\Fur_2^{-1}g(x-\alpha_1,x-y+\beta_2)|d\alpha_1\,d\beta_1 d\b_2\\
&\leq \intrdd
\|H(\cdot,x-y+\b_2;\b_1,\b_2)\|_2\|T_{(0,-\b_2)}\Fur_2^{-1}
g(\cdot,x-y)\|_2 d\b_1 d\b_2.
\end{align*}
For simplicity, let us set $$F(t):=\intrdd
\|H(\cdot,t+\b_2;\b_1,\b_2)\|_2\|T_{(0,-\b_2)}\Fur_2^{-1}
g(\cdot,t)\|_2 d\b_1 d\b_2,$$ so that $|K(x,y)|\leq F(x-y)$.\\
 We are left to estimate $\| F\|_{1}$. Using the
Cauchy-Schwarz's inequality with respect to the $t$ variable,
\begin{align*}
\|F\|_1 &=\intrd \intrdd
\|H(\cdot,t+\b_2;\b_1,\b_2)\|_2\|T_{(0,-\b_2)}\Fur_2^{-1}
g(\cdot,t)\|_2 d\b_1 d\b_2 dt\\
&\leq\intrdd \|H(\cdot,\cdot;\b_1,\b_2)\|_2\|T_{(0,-\b_2)}\Fur_2^{-1}g\|_2 d\b_1 d\b_2\\
&=\|g\|_2\intrdd \|H(\cdot,\cdot;\b_1,\b_2)\|_2 d\b_1 d\b_2\\
&=\|H\|_{L^{2,1}}=\|V_g a\|_{L^{2,1}}\asymp\|a\|_{M^{2,1}},
\end{align*}
where in the last row we used Parseval's formula and the assumption $\|g\|_2=1$. This concludes the proof.
\end{proof}
\begin{theorem}\label{29-0}
Let $1\leq p,q,r,s\leq\infty$
such that
\begin{equation}\label{limitazionilp}
\frac{1}{p}\geq
\left|\frac{1}{r}-\frac{1}{2}\right|+\frac{1}{q'},\qquad
q\leq\min\{r,r',s,s'\}.
\end{equation}
Then every symbol $a\in M^{p,q}$ gives
rise to a bounded operator $a(x,D)$ on
$W(L^r,L^s)$ with the uniform estimate
\[
\|a(x,D)f\|_{W(L^r,L^s)}\lesssim
\|a\|_{M^{p,q}}\|f\|_{W(L^r,L^s)}.
\]
\end{theorem}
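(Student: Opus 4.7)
The plan is to prove Theorem \ref{29-0} by multilinear complex interpolation of three endpoint boundedness results for the symbol-to-operator map $\Theta:a\mapsto a(x,D)$, combined with inclusion relations for modulation spaces. The three endpoints are: \emph{(E1)} Theorem \ref{bound1} (read for the Kohn--Nirenberg quantization via Remark \ref{oss-scambio}), giving $\Theta:M^{\infty,1}\to\mathcal{B}(W(L^2,L^{s_1}))$ for every $s_1\in[1,\infty]$; \emph{(E2)} Theorem \ref{bound2}, giving $\Theta:M^{p_0,1}\to\mathcal{B}(W(L^{r_0},L^{s_0}))$ for every $p_0\in[1,2]$ and every $r_0,s_0\in[1,\infty]$; and \emph{(E3)} the classical Hilbert--Schmidt endpoint $\Theta:M^{2,2}\to\mathcal{B}(L^2(\R^d))$, coming from the identity $M^{2,2}(\rdd)=L^2(\rdd)$, the bound $\|L_a\|_{HS}=\|a\|_{L^2}$, and Remark \ref{oss-scambio}.

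The argument splits in two stages. First, the inclusion $M^{p_1,q}\hookrightarrow M^{p_2,q}$ for $p_1\le p_2$ from Lemma \ref{mo}(i) reduces the proof to the boundary face $1/p=|1/r-1/2|+1/q'$ of the admissible polyhedron \eqref{limitazionilp}: for any admissible $(p,q,r,s)$, setting $1/p'=|1/r-1/2|+1/q'\le 1/q+1/q'=1$ (which one checks from $q\le\min(r,r')$) furnishes an exponent $p'\ge p$ on the boundary, and $M^{p,q}\hookrightarrow M^{p',q}$ yields the conclusion from the boundary case.

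Second, fix a boundary target $(p,q,r,s)$ with $q\le\min(r,r',s,s')$. I would set $\gamma=2/q'\in[0,1]$ (legitimate because $q\le\min(r,r')\le 2$), then seek weights $\alpha,\beta\ge 0$ with $\alpha+\beta+\gamma=1$ and auxiliary parameters $s_1,p_0,r_0,s_0$ so that the $(\alpha,\beta,\gamma)$-weighted convex combination of a point of \emph{(E1)}, a point of \emph{(E2)}, and the single point of \emph{(E3)} equals $(1/p,1/q,1/r,1/s)$. The four coordinate equations determine $\gamma$ from $1/q$, then fix $\alpha,\beta$ and the endpoint parameters uniquely; direct algebra shows that a nonnegative solution exists in the prescribed ranges $s_1,r_0,s_0\in[1,\infty]$, $p_0\in[1,2]$ precisely under the full constraint $q\le\min(r,r',s,s')$---this is the geometric content of the second condition in \eqref{limitazionilp}. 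Iterating bilinear complex interpolation (Lemmas \ref{mo}(iii) and \ref{WA}(iii)) across the three endpoints then yields the desired boundedness of $\Theta:M^{p,q}\to\mathcal{B}(W(L^r,L^s))$.

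The main obstacle is the combinatorial bookkeeping in the second stage, in particular a case analysis in the four subcases $r\gtrless 2$, $s\gtrless 2$ to accommodate the ``fold'' of $|1/r-1/2|$ at $r=2$ and the symmetry $q\le\min(s,s')$ at $s=2$. A minor technicality is the density/absolute-continuity hypothesis in Lemmas \ref{mo}(iii) and \ref{WA}(iii): this is handled either by the Schwartz-closure clauses there, or by Remark \ref{densglobal} in the non-reflexive endpoint cases.
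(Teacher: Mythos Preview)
Your proposal is correct and follows essentially the same route as the paper: both arguments rest on the three endpoints (E1) = Theorem \ref{bound1}, (E2) = Theorem \ref{bound2}, and (E3) the Hilbert--Schmidt case $p=q=r=s=2$, glued by complex interpolation together with the inclusion $M^{p,q}\hookrightarrow M^{p',q}$. The only organisational difference is that the paper carries out the interpolation in two explicit stages --- first (E1) against (E2) to cover the full $q=1$ face (with a short duality argument to reach $s=\infty$), then the result against (E3) --- whereas you phrase it as a single three-point interpolation after reducing to the boundary face; these are equivalent. One cosmetic remark: your use of $p'$ for the boundary exponent clashes with the conjugate-exponent notation used elsewhere, so pick a different letter.
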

\begin{proof}
We first make the complex interpolation
between the estimates of Theorem
\ref{bound1} and Theorem \ref{bound2}
(which deal with the cases in which
$q=1$). Using the interpolation
relations for Wiener amalgam and
modulation spaces of Lemma \ref{WA}
(iii) and Lemma \ref{mo} (iii), we
obtain, for
 every $1\leq s<\infty$,
$$\|a(x,D) f\|_{W(L^r,L^s)}\lesssim \|a\|_{M^{p,1}}\|f\|_{W(L^r,L^s)},$$
where
$$
\frac{1}{p}\geq
\left|\frac{1}{r}-\frac{1}{2}\right|.
$$
The remaining cases, when  $s=\infty$, $p>2$ (and therefore
$r>1$), follow by duality, for $W(L^r,L^\infty)=W(L^{r'},L^1)'$
(Lemma \ref{WA} (iv)) and, considering the Weyl form $L_\sigma$ of
$a(x,D)$, we have $(L_\sigma)^*=L_{\bar{\sigma}}$.
\par Finally,
by interpolation between what we just
proved and the well-known case
$p=q=r=s=2$ (pseudodifferential
operators with symbols in
$M^2(\rdd)=L^2(\rdd)$ are bounded on
$W(L^2,L^2)(\rd)=L^2(\rd)$; see
\cite[Theorem 14.6.1]{book}), we obtain
the claim.
\end{proof}

Recalling that, for $s=r$, we have
$W(L^r,L^r)=L^r$, the above boundedness
result can be rephrased for
pseudodifferential operators acting on
$L^p$ spaces as follows (see Figure 1
in Introduction).
\begin{corollary}\label{29-1}
Let $1\leq p,q,r\leq\infty$
such that
\begin{equation}\label{limitazionilpr}
\frac{1}{p}\geq
\left|\frac{1}{r}-\frac{1}{2}\right|+\frac{1}{q'},\qquad
q\leq\min\{r,r'\}.
\end{equation}
Then every symbol in $a\in
M^{p,q}$ gives rise to a
bounded operator $a(x,D)$ on $L^r$
with the uniform estimate
\[
\|a(x,D)f\|_r\lesssim
\|a\|_{M^{p,q}}\|f\|_{r}.
\]
\end{corollary}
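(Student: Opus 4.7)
The plan is to deduce this corollary directly from Theorem \ref{29-0} by specializing the global component of the Wiener amalgam space to match its local component. Since $W(L^r,L^r)(\rd)=L^r(\rd)$ with equivalent norms (this is a standard identity for amalgam spaces, and follows immediately from the definition \eqref{wienerdis} by summing the $r$-th powers over all translates of the unit cube), choosing $s=r$ in Theorem \ref{29-0} turns the target and source space into $L^r$.

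Next I would check that the hypothesis \eqref{limitazionilpr} implies the hypothesis \eqref{limitazionilp} of Theorem \ref{29-0} when $s=r$. The first inequality involving $1/p$, $1/r$ and $1/q'$ is identical in both statements. For the second, setting $s=r$ gives $\min\{r,r',s,s'\}=\min\{r,r',r,r'\}=\min\{r,r'\}$, so the condition $q\le\min\{r,r'\}$ of \eqref{limitazionilpr} is exactly what is needed. Hence every $a\in M^{p,q}$ satisfying \eqref{limitazionilpr} gives a bounded operator $a(x,D)$ on $W(L^r,L^r)=L^r$, with the uniform estimate
\[
\|a(x,D)f\|_{r}\asymp\|a(x,D)f\|_{W(L^r,L^r)}\lesssim \|a\|_{M^{p,q}}\|f\|_{W(L^r,L^r)}\asymp \|a\|_{M^{p,q}}\|f\|_{r},
\]
for every $f\in\cS(\rd)$, which is the desired conclusion (recall the convention on boundedness introduced at the beginning of Section \ref{section3} to handle the case $r=\infty$, where $\cS$ is not dense).

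There is essentially no obstacle: the corollary is a specialization, not a genuine refinement, and all the real analytic work has already been carried out in Theorems \ref{bound1}, \ref{bound2}, and their interpolation/duality combination producing Theorem \ref{29-0}. The only point worth double-checking in writing out the argument is that the density convention on $\cS(\rd)$ in $W(L^r,L^s)$ used in Section \ref{section3} is consistent with that used for $L^r$ when $r=\infty$, so that the boundedness established on test functions transfers correctly between the two formulations.
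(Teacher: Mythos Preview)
Your proposal is correct and matches the paper's own approach exactly: the paper derives Corollary \ref{29-1} by specializing Theorem \ref{29-0} to $s=r$ and invoking $W(L^r,L^r)=L^r$. There is nothing to add.
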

\section{Boundedness on Wiener amalgam spaces: necessary
conditions}\label{section4} In this
section we show the optimality of
Theorem \ref{29-0} (and Corollary
\ref{29-1}). We need the following
auxiliary results.
\begin{lemma}\label{dualita1}
Suppose that for some $1\leq
p,q,r,s\leq\infty$ the
following estimate holds:
\[
\|L_\sigma
f\|_{W(L^r,L^s)}\leq
C\|\sigma\|_{M^{p,q}}\|f\|_{W(L^r,L^s)},\
\  \forall \sigma\in
\cS(\R^{2d}),\ \forall f\in
\cS(\R^d).\] Then the same
estimate is satisfied with
$r,s$ replaced by $r',s'$
(even if $r=\infty$ or
$s=\infty$).
\end{lemma}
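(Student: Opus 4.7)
The plan is to use a standard duality argument based on the fact that the Weyl transform satisfies $(L_\sigma)^\ast = L_{\bar\sigma}$, combined with the conjugation invariance of the modulation space norm, i.e.\ $\|\bar\sigma\|_{M^{p,q}} = \|\sigma\|_{M^{p,q}}$ (which is immediate from the definition of the STFT). Given $g \in \mathcal{S}(\R^d)$ and $\sigma \in \mathcal{S}(\R^{2d})$, the distribution $L_\sigma g$ lies in $\mathcal{S}(\R^d)$, so in particular it belongs to $W(L^{r'},L^{s'})$, and the strategy is to compute its norm there by testing against Schwartz functions.

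First I would record the duality identity: for any $h \in \mathcal{S}(\R^d)$,
\[
\|h\|_{W(L^{r'},L^{s'})} = \sup\bigl\{\,|\langle h,f\rangle|\, :\, f\in\mathcal{S}(\R^d),\ \|f\|_{W(L^r,L^s)}\le 1\bigr\}.
\]
When $r,s<\infty$ this is the usual Hahn--Banach consequence of the density of $\mathcal{S}$ in $W(L^r,L^s)$ together with Lemma~\ref{WA}(iv). When $r=\infty$ or $s=\infty$, one instead uses Remark~\ref{densglobal} (and its analogue for the role of $r$), according to which $W(L^{r'},L^{s'})$ is the dual of $\overline{\mathcal{S}}^{W(L^r,L^s)}$; this is exactly the point at which the technical convention, introduced at the beginning of Section~\ref{section4}, that boundedness on $W(L^\infty,\cdot)$ is only required on Schwartz functions becomes essential, and it is the main subtlety in the argument.

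With this in hand, the computation is a two-line chain:
\begin{align*}
\|L_\sigma g\|_{W(L^{r'},L^{s'})} &= \sup_{\substack{f\in\mathcal{S}\\ \|f\|_{W(L^r,L^s)}\le 1}} |\langle L_\sigma g, f\rangle| = \sup |\langle g, L_{\bar\sigma} f\rangle|\\
&\le \|g\|_{W(L^{r'},L^{s'})} \sup_{\substack{f\in\mathcal{S}\\ \|f\|_{W(L^r,L^s)}\le 1}} \|L_{\bar\sigma} f\|_{W(L^r,L^s)}\\
&\le C\,\|\bar\sigma\|_{M^{p,q}}\,\|g\|_{W(L^{r'},L^{s'})} = C\,\|\sigma\|_{M^{p,q}}\,\|g\|_{W(L^{r'},L^{s'})},
\end{align*}
where the first inequality is the standard Hölder-type pairing between $W(L^{r'},L^{s'})$ and $W(L^r,L^s)$, the second is the hypothesis applied to $\bar\sigma$, and the last equality is the conjugation invariance of $\|\cdot\|_{M^{p,q}}$. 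This gives the desired estimate on Schwartz functions, which is exactly what the convention requires to conclude boundedness on $W(L^{r'},L^{s'})$ even when $r'=\infty$ or $s'=\infty$.
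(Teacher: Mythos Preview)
Your proof is correct and follows essentially the same duality argument as the paper: both use $\langle L_\sigma g,f\rangle=\langle g,L_{\bar\sigma}f\rangle$, the conjugation invariance $\|\bar\sigma\|_{M^{p,q}}=\|\sigma\|_{M^{p,q}}$, and the Hölder-type pairing between $W(L^{r'},L^{s'})$ and $W(L^r,L^s)$. The only cosmetic difference is in how the endpoint case is handled: you package the key fact as the duality identity $\|h\|_{W(L^{r'},L^{s'})}=\sup_{f\in\cS,\ \|f\|_{W(L^r,L^s)}\le1}|\langle h,f\rangle|$ and appeal to Remark~\ref{densglobal}, whereas the paper invokes the isometric embedding $W(L^{r'},L^{s'})\hookrightarrow W(L^r,L^s)'$ and then passes from Schwartz test functions to general $g\in W(L^r,L^s)$ via an explicit approximating sequence $g_n\to g$ in $\cS'$ with $\|g_n\|_{W(L^r,L^s)}\le\|g\|_{W(L^r,L^s)}$. (Minor point: the convention you cite is actually introduced at the beginning of Section~\ref{section3}, not Section~\ref{section4}.)
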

\begin{proof}
Indeed, observe that $\langle
L_\sigma f,g\rangle=\langle
f,L_{\overline{\sigma}}g\rangle$,
$\forall f,g\in\cS(\R^d)$.
Hence, by Lemma \ref{WA} (iv)
and  the assumptions written
for $L_{\overline{\sigma}}$
(observe that
$\|\sigma\|_{M^{p,q}}=\|\bar{\sigma}\|_{M^{p,q}}$),
we have
\begin{equation}\label{dua1}
|\langle L_\sigma
f,g\rangle|\leq
C\|\sigma\|_{M^{p,q}}\|f\|_{W(L^{r'},L^{s'})}\|g\|_{W(L^{r},L^{s})},\quad\forall
f\in \cS(\R^d),\ \forall g\in
\cS(\R^d).
\end{equation}
Since $L_\sigma f$ is a
Schwartz function, it belongs
to $W(L^{r'},L^{s'})\subset
W(L^{r},L^{s})'$, and
$\|L_\sigma
f\|_{W(L^{r},L^{s})'}=\|L_\sigma
f\|_{W(L^{r'},L^{s'})}$,
because $W(L^{r'},L^{s'})$ is
isometrically embedded in
$W(L^r,L^s)'$. Hence it
suffices to prove that the
estimate in \eqref{dua1}
holds for every $g\in
W(L^r,L^s)$. This follows by
a density argument. Namely,
consider, for a given $g\in
W(L^r,L^s)$, a sequence $g_n$
of Schwartz functions, with
$g_n\to g$ in $\cS'(\R^d)$
and $\|g_n\|_{W(L^r,L^s)}\leq
\|g\|_{W(L^r,L^s)}$
(\footnote{For example, take
$g_n(x)=n^d\f_1(x/n)\left(
g\ast\f_2(n\,\cdot)\right)(x)$,
with
$\f_1,\f_2\in\cC^\infty_0(\R^d)$,
$\f_1(0)=1$,
$\|\f_2\|_1=1$.}). Letting
$n\to\infty$ in the above
estimate (written with $g_n$
in place of $g$) gives the
desired conclusion.
\end{proof}
\begin{lemma}\label{l1} Let
$h\in\cC^\infty_0(\R^d)$, and
consider the family of
functions
\[
h_\lambda(x)=h(x) e^{-\pi
i\lambda|x|^2}, \qquad
\lambda\geq1.
\]
Then, for $1\leq
q\leq\infty$,
\begin{equation}\label{ub}
\|\widehat{h_\lambda}\|_{q}\asymp
\lambda^{\frac{d}{q}-\frac{d}{2}}.
\end{equation}
\end{lemma}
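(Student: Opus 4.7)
The plan is to compute $\widehat{h_\lambda}$ explicitly via completion of the square, then rescale to reduce the $L^q$ estimate to a uniform estimate on a simpler auxiliary function.

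First I would complete the square: since $-\pi i\lambda|x|^{2}-2\pi i x\cdot\xi=-\pi i\lambda|x+\xi/\lambda|^{2}+\pi i|\xi|^{2}/\lambda$, we have
$$
\widehat{h_\lambda}(\xi)=e^{\pi i|\xi|^{2}/\lambda}\int h(x)\,e^{-\pi i\lambda|x+\xi/\lambda|^{2}}\,dx.
$$
Substituting $y=\sqrt{\lambda}(x+\xi/\lambda)$ and then setting $\xi=\lambda u$ yields
$$
\widehat{h_\lambda}(\lambda u)=\lambda^{-d/2}e^{\pi i\lambda|u|^{2}}G_\lambda(u),\qquad
G_\lambda(u):=\int h\bigl(y/\sqrt{\lambda}-u\bigr)e^{-\pi i|y|^{2}}\,dy.
$$
A change of variable in $\|\widehat{h_\lambda}\|_q^q$ gives $\|\widehat{h_\lambda}\|_q=\lambda^{d/q-d/2}\|G_\lambda\|_q$, so the claim reduces to the $\lambda$-uniform equivalence $\|G_\lambda\|_q\asymp 1$ for $\lambda\geq 1$.

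To control $G_\lambda$ I would switch to the Fourier side via Parseval, obtaining
$$
G_\lambda(u)=e^{-i\pi d/4}\,\mathcal{F}\bigl(\hat h(\zeta)\,e^{-\pi i|\zeta|^{2}/\lambda}\bigr)(u).
$$
For the \emph{upper bound}, standard integration by parts gives $(1+|u|^{2})^{N}|G_\lambda(u)|\lesssim\sum_{|\alpha|\leq 2N}\|\partial^{\alpha}(\hat h\,e^{-\pi i|\cdot|^{2}/\lambda})\|_{1}$. Since $\partial^{\alpha}e^{-\pi i|\zeta|^{2}/\lambda}$ is a polynomial in $\zeta/\lambda$ times the chirp, and $\hat h\in\mathcal{S}$, the right-hand side is bounded uniformly in $\lambda\geq 1$ (the decisive point being $\lambda\geq 1$, so that $|\zeta/\lambda|^{k}\leq|\zeta|^{k}$). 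This produces $|G_\lambda(u)|\leq C_{N}(1+|u|)^{-N}$ uniformly in $\lambda\geq 1$ and hence $\|G_\lambda\|_q\leq C$ for every $q\in[1,\infty]$.

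For the \emph{lower bound}, observe that dominated convergence in $\zeta$ (with the $L^{1}$ dominant $|\hat h|$) yields $G_\lambda(u)\to e^{-i\pi d/4}h(-u)$ as $\lambda\to\infty$, uniformly in $u$. Combining this uniform pointwise convergence with the Schwartz-type dominant above, dominated convergence gives $\|G_\lambda\|_q\to\|h\|_q>0$ (using $h\not\equiv 0$), so $\|G_\lambda\|_q\geq \tfrac12\|h\|_q$ for $\lambda$ larger than some $\lambda_{0}$. On the compact interval $[1,\lambda_{0}]$ the map $\lambda\mapsto G_\lambda$ is continuous into $L^q$ (again by dominated convergence on the integral representation), and $G_\lambda\not\equiv 0$ for every $\lambda$ since $\hat h\,e^{-\pi i|\cdot|^{2}/\lambda}$ is nonzero; hence $\|G_\lambda\|_q$ attains a positive minimum there. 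Taking the minimum of the two lower bounds completes the proof.

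The main technical point is the uniform-in-$\lambda$ Schwartz bound on $\hat h(\zeta)e^{-\pi i|\zeta|^{2}/\lambda}$: derivatives of the chirp introduce factors of $\zeta/\lambda$ rather than blowing up in $\lambda$, and this is precisely what makes the hypothesis $\lambda\geq 1$ (as opposed to $\lambda$ free to be small) essential for the argument.
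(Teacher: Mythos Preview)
Your argument is correct (up to an immaterial sign in the chirp exponent in the Parseval step), and it follows a genuinely different route from the paper. The paper obtains the upper bound by splitting frequency space into $|\omega|\geq 2c\lambda$ and $|\omega|<2c\lambda$ and invoking, respectively, the non-stationary and stationary phase theorems from Stein; it then gets the lower bound in one line via $\|h\|_2^2=\|\widehat{h_\lambda}\|_2^2\leq\|\widehat{h_\lambda}\|_q\|\widehat{h_\lambda}\|_{q'}$ combined with the already-proved upper bound for $q'$. Your completion of the square plus rescaling to $G_\lambda$ is essentially an explicit, hands-on proof of stationary phase for this quadratic phase, and the integration-by-parts bound on $\mathcal{F}\bigl(\hat h\,e^{\pm\pi i|\cdot|^2/\lambda}\bigr)$ replaces the black-box citations; this makes your upper bound entirely self-contained. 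On the other hand, your lower bound via $G_\lambda\to e^{-i\pi d/4}h(-\cdot)$ and a continuity/compactness argument is noticeably heavier than the paper's H\"older trick, which you could adopt to shorten the proof: once $\|\widehat{h_\lambda}\|_q\lesssim\lambda^{d/q-d/2}$ is known for all $q$, the lower bound follows immediately from $\|h\|_2^2\leq\|\widehat{h_\lambda}\|_q\|\widehat{h_\lambda}\|_{q'}$.
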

\begin{proof} The result is
known and outlined, e.g., in
\cite[Exercise 2.34]{tao}. We report on
a sketch of the proof for the sake of
completeness.\par Let $c>0$ be such
that $h(x)$ vanishes for $|x|>c$. First
one shows the estimate
$|\widehat{h_\lambda}(\o)|\leq
C_N\langle \o\rangle^{-N}\lambda^{-N}$,
for every $N>0$ and $\o\in\rd$ such
that $|\o|\geq 2c\lambda$. To this end,
we observe that by rotational symmetry
we can assume $\o=(\o_1,0,...,0)$. The
claim then follows by applying the
Non-stationary Phase Theorem
\cite[Proposition 1, page 331]{stein93}
with  the asymptotic parameter $\o_1$
and the phase $\phi(x_1):=-2\pi
x_1-\pi\frac{\lambda}{\o_1}x_1^2$ (the
assumptions being satisfied for
$|x_1|\leq c$, uniformly with respect
to the parameter $\lambda/\o_1$). \par
In the region $|\o|<2c\lambda$ we have
the estimate
$|\widehat{h_\lambda}(\o)|\leq
C\lambda^{-d/2}$, as a consequence of
the Stationary Phase Theorem (see
\cite[5.13 (a), page 363]{stein93})
with the phase given by the quadratic
polynomial $\phi(x):=-\pi |x|^2$. One
hence obtains the upper bound
$\|\widehat{h_\lambda}\|_q\lesssim
\lambda^{d(1/q-1/2)}$. Since
\[
\|h\|_2^2=\|\widehat{h_\lambda}\|_2^2\leq\|\widehat{h_\lambda}\|_q\|\widehat{h_\lambda}\|_{q'}\lesssim
\lambda^{d(1/q'-1/2)}\|\widehat{h_\lambda}\|_q,
\]
the lower bound follows as
well.
\end{proof}

We now establish a version of
the upper bound in Lemma
\ref{l1}, for Wiener amalgam
spaces.
\begin{lemma}\label{l2} With the notation of
Lemma \ref{l1} we have, for
$1\leq q\leq\infty$,
\[
\|\widehat{h_\lambda}\|_{W(L^p,L^q)}\lesssim
\lambda^{\frac{d}{q}-\frac{d}{2}},\qquad
\lambda\geq1.
\]
\end{lemma}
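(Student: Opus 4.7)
\medskip
\noindent
\textbf{Plan of proof.} The plan is to argue directly from the two pointwise estimates on $\widehat{h_\lambda}$ already derived in the proof of Lemma \ref{l1}: the stationary-phase bound
\[
|\widehat{h_\lambda}(\omega)|\lesssim \lambda^{-d/2}\qquad\text{for }|\omega|\leq 2c\lambda,
\]
and the non-stationary-phase bound
\[
|\widehat{h_\lambda}(\omega)|\lesssim \langle\omega\rangle^{-N}\lambda^{-N}\qquad\text{for }|\omega|>2c\lambda\text{ and every }N>0,
\]
where $c>0$ is chosen so that $\operatorname{supp}h\subset\{|x|\leq c\}$. Combined with the discrete expression \eqref{wienerdis} for the Wiener amalgam norm, these bounds will yield the claimed estimate via a simple near/far decomposition of the sum over lattice cubes.

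Concretely, I would split $\Z$ into a near region $\{n:|n|\leq C\lambda\}$ and a far region $\{n:|n|>C\lambda\}$, for a sufficiently large constant $C$ depending only on $c$. On each unit cube $Q_n=n+[0,1)^d$ in the near region, the stationary-phase bound gives $\|\widehat{h_\lambda}\cdot T_n\chi_\mathcal{Q}\|_p\lesssim\lambda^{-d/2}$ uniformly in $p$ (since $|Q_n|=1$); there being $O(\lambda^d)$ such cubes, the contribution of the near region to the $\ell^q$ sum is bounded by $(\lambda^d\cdot\lambda^{-dq/2})^{1/q}=\lambda^{d/q-d/2}$, which is exactly the claimed upper bound.

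For the far region the non-stationary-phase estimate yields $\|\widehat{h_\lambda}\cdot T_n\chi_\mathcal{Q}\|_p\lesssim\langle n\rangle^{-N}\lambda^{-N}$; summing in $\ell^q$ and choosing $N$ sufficiently large (so that $Nq>d$ and the resulting negative power of $\lambda$ is as small as desired), the far-region contribution is negligible compared to $\lambda^{d/q-d/2}$. No genuine difficulty is expected here: the argument is essentially a bookkeeping consequence of the pointwise bounds in Lemma \ref{l1}, and the only care needed is to track constants in the far-region estimate so that the sharp $\lambda^{d/q-d/2}$ scaling coming from the near region is preserved. In particular, neither interpolation nor duality is required.
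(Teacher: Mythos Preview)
Your argument is correct: the pointwise stationary/non-stationary phase bounds from Lemma~\ref{l1} indeed control the local $L^p$ norms on unit cubes uniformly in $p$, and your near/far splitting of the lattice sum recovers the $\lambda^{d/q-d/2}$ rate (the $q=\infty$ case being immediate from the global bound $|\widehat{h_\lambda}|\lesssim\lambda^{-d/2}$).

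However, the paper proceeds differently and more abstractly. It does not revisit the pointwise estimates at all, but instead exploits the fact that $\widehat{h_\lambda}$ has Fourier transform (namely $h_\lambda$, up to a reflection) supported in a fixed compact set independent of $\lambda$. This allows two one-line reductions to Lemma~\ref{l1}: for $p\geq q$, the Bernstein-type inequality of Lemma~\ref{new} gives $\|\widehat{h_\lambda}\|_{W(L^p,L^q)}\lesssim\|\widehat{h_\lambda}\|_q$; for $p<q$, the local inclusion $L^q\hookrightarrow L^p$ on unit cubes yields $L^q\hookrightarrow W(L^p,L^q)$, so again $\|\widehat{h_\lambda}\|_{W(L^p,L^q)}\lesssim\|\widehat{h_\lambda}\|_q$. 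Your approach has the advantage of being self-contained (no appeal to Lemma~\ref{new}), while the paper's approach is shorter and highlights the structural reason behind the estimate---namely that the band-limitedness of $\widehat{h_\lambda}$ makes all the Wiener amalgam norms comparable to the plain $L^q$ norm already computed in Lemma~\ref{l1}.
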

\begin{proof}
When $p\geq q$ the desired
result follows from Lemmata
\ref{new} and \ref{l1}. When
$p< q$ the result follows
from the inclusion
$L^q\hookrightarrow
W(L^p,L^q)$ and Lemma
\ref{l1}.
\end{proof}
\begin{proposition}\label{t41} Let
$\chi\in\cC^\infty_0(\R^d)$,
$\chi\geq0$, $\chi(0)=1$.
Suppose that, for some $1\leq
p,q,r,r_1,r_2\leq\infty$,
$C>0$, the estimate
\begin{equation}\label{stimacont}
\| \chi a(x,D)f\|_r\leq C
\|a\|_{M^{p,q}}\|f\|_{W(L^{r_1},L^{r_2})},\qquad
\forall a\in\cS(\R^{2d}),\
f\in\cS(\R^d),
\end{equation}
holds. Then $q\leq r'_2$ and
\begin{equation}\label{limitazionilp2}
\frac{1}{p}\geq\frac{1}{2}-\frac{1}{r}+\frac{1}{q'}.
\end{equation}
\end{proposition}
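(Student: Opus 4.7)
The plan is to establish the two stated inequalities separately, by specializing \eqref{stimacont} to two different test families and extracting the constraints as a parameter tends to infinity.

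For the inequality $q\leq r_2'$, I would use a discrete lattice/comb construction. Fix auxiliary bumps $g,\chi_1,\psi\in \cC_0^\infty(\R^d)$ and a set $\{y_k\}_{k=1}^N\subset\R^d$ of $N$ well-separated points (pairwise distances larger than all the supports of $g,\chi,\chi_1,\psi$), with $0\in\{y_k\}$. Setting
\[
f(x)=\sum_{k=1}^N g(x-y_k),\qquad a(x,\omega)=\chi_1(x)\,\psi(\omega)\sum_{k=1}^N e^{2\pi i y_k\cdot \omega},
\]
the disjointness of the supports of the bumps in $f$ yields $\|f\|_{W(L^{r_1},L^{r_2})}\asymp N^{1/r_2}$; the tensor-product identity $\|\chi_1\otimes b\|_{M^{p,q}}=\|\chi_1\|_{M^{p,q}}\|b\|_{M^{p,q}}$, combined with the quasi-disjointness in the $\eta$-variable of the STFTs of the modulated copies $M_{y_k}\psi$ (using the rapid decay of $\hat\psi$), gives $\|a\|_{M^{p,q}}\asymp N^{1/q}$; and a direct calculation $a(x,D)f(x)=\chi_1(x)\sum_{k,j}(\check\psi*g)(x+y_k-y_j)$, cut by $\chi$, retains only the diagonal pairs $k=j$ by the support separation, giving $\|\chi\,a(x,D)f\|_r\asymp N$. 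Substitution into \eqref{stimacont} and sending $N\to\infty$ forces $1\leq 1/q+1/r_2$, i.e., $q\leq r_2'$.

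For the inequality $1/p\geq 1/2-1/r+1/q'$, the main tool is the chirp family $h_\lambda$ of Lemma~\ref{l1}. Since $|h_\lambda|=|h|$, the Wiener-amalgam norm $\|h_\lambda\|_{W(L^{r_1},L^{r_2})}$ is bounded uniformly in $\lambda$. The Fourier-duality identity $\|\hat u\|_{M^{p,q}}\asymp \|u\|_{M^{q,p}}$ applied to $u=h_\lambda$, together with Lemma~\ref{lloc}(i) for the compactly supported chirp and the bound in Lemma~\ref{l1}, gives the fundamental asymptotic
\[
\|\overline{\widehat{h_\lambda}}\|_{M^{p,q}}\asymp\|h_\lambda\|_{M^{q,p}}\asymp\|\widehat{h_\lambda}\|_p\asymp\lambda^{d/p-d/2}.
\]
Testing first with $a(x,\omega)=\chi_1(x)\overline{\widehat{h_\lambda}(\omega)}$, the formula $a(x,D)h_\lambda(x)=\chi_1(x)(h_\lambda*\widetilde{h_\lambda})(x)$ together with the chirp cancellation $h_\lambda\cdot e^{i\pi\lambda|\cdot|^2}=h$ yields $\chi\,a(x,D)h_\lambda=\chi\chi_1\,h\,\widehat{|h|^2}(\lambda\cdot)$, so $\|\chi\,a(x,D)h_\lambda\|_r\asymp\lambda^{-d/r}$; sending $\lambda\to\infty$ gives the special case $1/p\geq 1/2-1/r$ corresponding to $q=1$. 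To capture the additional $+1/q'$ contribution for general $q$, I would superimpose a matching modulation comb in the frequency variable of the symbol and a matching lattice of translated chirps in $f$: take $f=\sum_{j=1}^N T_{y_j}h_\lambda$ and
\[
a(x,\omega)=\chi_1(x)\overline{\widehat{h_\lambda}(\omega)}\sum_{k=1}^N e^{2\pi i y_k\cdot\omega}.
\]
The Fourier-dual STFT identity $|V_g u(\xi,\eta)|=|V_{\hat g}\hat u(\eta,-\xi)|$ applied to $u=\overline{\widehat{h_\lambda}}$ (whose Fourier transform $\overline{h_\lambda}$ is compactly supported) shows that the $\eta$-localizations of the STFTs of $M_{y_k}\overline{\widehat{h_\lambda}}$ are genuinely disjoint for sufficiently well-spaced $\{y_k\}$, so the comb construction yields the sharp estimate $\|a\|_{M^{p,q}}\asymp N^{1/q}\lambda^{d/p-d/2}$. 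The concentration of $h_\lambda*\widetilde{h_\lambda}$ within $|x|\lesssim 1/\lambda$ forces only the diagonal $k=j$ pairs to survive after cutting by $\chi$, leaving $\|\chi\,a(x,D)f\|_r\asymp N\lambda^{-d/r}$ and $\|f\|_{W(L^{r_1},L^{r_2})}\asymp N^{1/r_2}$. Substituting into \eqref{stimacont} and balancing $N$ against $\lambda$ in the critical regime $r_2=q'$ of the first inequality then produces the full bound $1/p\geq 1/2-1/r+1/q'$.

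The main technical obstacle is the careful justification of the $N^{1/q}$-scaling of the modulation norm of the comb $\sum_{k=1}^N M_{y_k}\overline{\widehat{h_\lambda}}$: this requires establishing genuine (rather than merely asymptotic) disjointness of the $\eta$-supports of the shifted STFTs via the Fourier-dual STFT identity and the compact support of $\overline{h_\lambda}$, which in turn dictates a lattice spacing exceeding the diameter of that common support. Once this is in hand, the remaining calculation is the pure algebra of matching powers of $N$ and $\lambda$ on the two sides of \eqref{stimacont}.
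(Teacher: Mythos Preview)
Your argument for the constraint $q\leq r_2'$ via a discrete comb is different from the paper's (which uses only the chirp family $h_\lambda$ and $f_\lambda=\Fur^{-1}(\overline{h_\lambda})$), but it is essentially sound, provided you tighten the ``quasi-disjointness'' step: with $\psi\in\cC_0^\infty$ the STFTs of the $M_{y_k}\psi$ are not compactly supported in $\eta$, only rapidly decaying, so you must either quantify the tails (choosing the separation $R$ to grow with $N$) or, more cleanly, take $\hat\psi\in\cC_0^\infty$ instead, as the paper does in its Proposition~\ref{t27}.

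The second part, however, has a genuine gap. In your comb-with-chirp construction the parameters $N$ and $\lambda$ are completely decoupled: the separation needed for the $\eta$-disjointness of the symbol STFTs, for the disjoint supports of the $T_{y_j}h_\lambda$, and for the vanishing of the off-diagonal kernel terms is governed by the fixed support of $h$, not by $\lambda$. Substituting your three scalings into \eqref{stimacont} gives
\[
N^{\,1-1/q-1/r_2}\ \lesssim\ \lambda^{\,d(1/p-1/2+1/r)},
\]
valid for all $N,\lambda\ge1$. But the first part already forces $q\le r_2'$, i.e.\ $1-1/q-1/r_2\le0$, so the left side is bounded above by $1$ and the inequality yields only $1/p\ge 1/2-1/r$. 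Your ``balancing in the critical regime $r_2=q'$'' cannot help: at $r_2=q'$ the $N$-exponent is exactly zero, and in any case $r_2$ is a \emph{given} parameter in the hypothesis, not one you may choose. No manipulation of $N$ versus $\lambda$ can manufacture the missing $+1/q'$.

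The paper obtains that extra term by a different mechanism: it tests on the single-parameter family $a'_\lambda(x,\omega)=e^{-\pi\lambda^2|x|^2}\,\widehat{h_\lambda}(\omega)$ and $f'_\lambda=\overline{h_\lambda}$. The concentrating Gaussian in the $x$-slot has $\|e^{-\pi\lambda^2|\cdot|^2}\|_{M^{p,q}}\lesssim\lambda^{-d/q'}$, so the symbol norm scales like $\lambda^{d/p-d/2-d/q'}$, while a direct kernel estimate gives $\|\chi\,a'_\lambda(x,D)f'_\lambda\|_r\gtrsim\lambda^{-d/r}$ and $\|f'_\lambda\|_{W(L^{r_1},L^{r_2})}$ is $\lambda$-independent. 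The $1/q'$ contribution thus comes from coupling the \emph{spatial} concentration scale of the symbol to $\lambda$, not from any lattice superposition.
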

\begin{proof}
First we prove the constraint $q\leq r'_2$. Let $h\in
\cC^\infty_0(\R^d)$, $h\geq0$, $h(0)=1$. Let $h_\lambda$ be as in
Lemma \ref{l1}.
 We test \eqref{stimacont}
on the family of symbols
\[
a_\lambda(x,\o)=h(x)  h_\lambda(\o)=h(x) h(\o) e^{-\pi
i\lambda|\o|^2},
\]
and functions $ f_\lambda=\Fur^{-1}
\left(\overline{h_\lambda}\right)\in \cS(\rd)$. An explicit
computation shows that
\[
\chi(x) a_\lambda(x,D) f_\lambda(x)=\int e^{2\pi ix\o}\chi(x) h(x)
h^2(\o)\,d\o,
\]
which is a non-zero Schwartz function independent of $\lambda$. On
the other hand, by Lemma \ref{l1}, we have
\[
\|a_\lambda\|_{M^{p,q}}\asymp
\|a_\lambda\|_{\Fur
L^q}\lesssim\|h_\lambda\|_{\Fur
L^q}\lesssim
\lambda^{\frac{d}{q}-\frac{d}{2}}.
\]
Similarly, by Lemma \ref{l2},
\[
\|f_\lambda\|_{W(L^{r_1},L^{r_2})}\lesssim
\lambda^{\frac{d}{r_2}-\frac{d}{2}}.
\]
Taking into account these
estimates and letting
$\lambda\to+\infty$,
\eqref{stimacont} then gives
$q\leq r'_2$.\par Let us now
prove \eqref{limitazionilp2}.
Let $h_\lambda$ be as above.
We now test the estimate
\eqref{stimacont} on the
family of symbols
\[
a'_\lambda(x,\o)= e^{-\pi\lambda|x|^2} \widehat{h_\lambda}(\o),
\]
and functions
$f'_\lambda=\overline{h_\lambda}$.
The operator
$a'_\lambda(x,D)$ has
integral kernel
\[
K_\lambda(x,y)=e^{-\pi
\lambda^2|x|^2}
h_\lambda(x-y),
\]
so that
\begin{align*}
\chi(x)|a'_\lambda(x,D)f'_\lambda(x)|&=\left|\int e^{-\pi
\lambda^2|x|^2+2\pi i\lambda xy} h(x-y)\chi(x)
h(y)\,dy\right|\\
&\geq {\rm Re}\int e^{-\pi \lambda^2|x|^2+2\pi i\lambda xy}
h(x-y)\chi(x) h(y)\,dy.
\end{align*}
Now, $h(y)$ has compact
support, say, in the ball
$|y|\leq C$. Moreover, if
$|x|\leq \lambda^{-1}$ for
$\lambda\geq\lambda_0$ large
enough, and $|y|\leq C$ we
have ${\rm Re}\left(e^{-\pi
\lambda^2|x|^2+2\pi i\lambda
xy}\right)\geq\frac{1}{2}$.
Hence we deduce
\[
\chi(x)|a'_\lambda(x,D)f'_\lambda(x)|\gtrsim 1,\quad {\rm for}\
|x|\leq \lambda^{-1},
\]
which implies
\[
\|\chi a'_\lambda(x,D)f'_\lambda\|_r\gtrsim
\lambda^{-\frac{d}{r}}.
\]
On the other hand,
$\|f'_\lambda\|_{W(L^{r_1},L^{r_2})}$
is clearly independent of
$\lambda$. Moreover, by
\cite[Lemma 3.2]{cordero2},
\[
\|e^{-\pi\lambda^2|\cdot|^2}\|_{M^{p,q}}\lesssim
\lambda^{-\frac{d}{q'}} \]
 and by Lemmata \ref{lloc} (ii)
 and
\ref{l1}  we have
\[
\|\widehat{h_\lambda}\|_{M^{p,q}}\lesssim\|\widehat{h_\lambda}\|_p\lesssim
\lambda^{\frac{d}{p}-\frac{d}{2}}.
\]
Hence
\[
\|a'_\lambda\|_{M^{p,q}}=\|e^{-\pi\lambda^2|\cdot|^2}\|_{M^{p,q}}
\|\widehat{h_\lambda}\|_{M^{p,q}}\lesssim\lambda^{\frac{d}{p}-\frac{d}{2}-\frac{d}{q'}}.
\]
\par Putting all together and letting
$\lambda\to\infty$ we obtain
\eqref{limitazionilp2}.
\end{proof}
\begin{proposition}\label{t27}
Suppose that, for some $1\leq
p,q,r,s,r_1,r_2\leq\infty$,
$C>0$, the estimate
\begin{equation}\label{stimacont27}
\|a(x,D)f\|_{W(L^{r_1},L^{r_2})}\leq
C
\|a\|_{M^{p,q}}\|f\|_{W(L^{r},L^{s})},\qquad
\forall a\in\cS(\R^{2d}),\
f\in\cS(\R^d),
\end{equation}
holds. Then $q\leq r$.
\end{proposition}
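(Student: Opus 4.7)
The plan is to prove the constraint $q\le r$ by testing the assumed estimate \eqref{stimacont27} on an explicit one-parameter family of Schwartz pairs $(a_\lambda,f_\lambda)_{\lambda\ge 1}$, designed so that the three scalings $\|a_\lambda\|_{M^{p,q}}$, $\|f_\lambda\|_{W(L^r,L^s)}$ and $\|a_\lambda(x,D)f_\lambda\|_{W(L^{r_1},L^{r_2})}$ combine as powers of $\lambda$ in a way that forces $1/q\ge 1/r$ as $\lambda\to\infty$. The testing strategy parallels that of Proposition \ref{t41}, but the chirp is placed in the spatial variable of the symbol (rather than the frequency one) and $f_\lambda$ is a concentrated rescaled bump, so that the exponent $r$ (the local integrability of the input amalgam) enters the scaling rather than the global exponent $s$.

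Concretely, I would take $h\in\cC^\infty_0(\R^d)$ with $h\ge 0$, $h(0)\ne 0$, and set $h_\lambda(x)=h(x)e^{-\pi i\lambda|x|^2}$ as in Lemma \ref{l1}. The test symbol is $a_\lambda(x,\omega)=h_\lambda(x)h(\omega)$ and the test function is $f_\lambda(x)=\lambda^{d/r}\phi(\lambda x)$ for a fixed bump $\phi\in\cC^\infty_0(\R^d)$ with $\hat\phi(0)\ne 0$. Using Lemma \ref{lloc}(i), the tensor-product structure of the modulation-space norm on $\R^{2d}$, and Lemma \ref{l1}, one obtains $\|a_\lambda\|_{M^{p,q}}\asymp\lambda^{d/q-d/2}$; and since for $\lambda\ge 1$ the bump $f_\lambda$ is supported in a fixed neighbourhood of the origin with $L^r$-mass $\|\phi\|_{L^r}$, we have $\|f_\lambda\|_{W(L^r,L^s)}\asymp 1$. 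A direct computation of $a_\lambda(x,D)f_\lambda(x)=h_\lambda(x)(\check h\ast f_\lambda)(x)$ via the change of variables $y\mapsto y/\lambda$ in the convolution $\check h\ast f_\lambda$ gives $(\check h\ast f_\lambda)(x)\to \lambda^{-d/r'}\hat\phi(0)\check h(x)$, hence $\|a_\lambda(x,D)f_\lambda\|_{W(L^{r_1},L^{r_2})}\asymp \lambda^{-d/r'}$.

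This first test, substituted in \eqref{stimacont27} and passed to the limit $\lambda\to\infty$, only produces $1/q\ge 1/r-1/2$, which is strictly weaker than $q\le r$ when $r>1$. To reach the sharp bound I would refine the construction by superposing $N$ well-separated time-frequency shifted copies of the pair $(a_\lambda,f_\lambda)$, using the Gabor atomic structure of the modulation space: for a suitably sparse lattice $\{(\alpha_k,\beta_k)\}_{k=1}^N\subset\R^{4d}$ and matched phase-space points $\{z_k\}_{k=1}^N\subset\R^{2d}$, take
\[
a_N(x,\omega)=\sum_{k=1}^N M_{\beta_k}T_{\alpha_k}(h\otimes h)(x,\omega),\qquad f_N(x)=\sum_{k=1}^N \pi(z_k)f_\lambda(x).
\]
Almost-orthogonality of well-spaced Gabor atoms yields $\|a_N\|_{M^{p,q}}\asymp N^{1/q}$ and $\|f_N\|_{W(L^r,L^s)}\asymp N^{1/r}$; pairing $(\alpha_k,\beta_k,z_k)$ via the intertwining relations of pseudodifferential operators with time-frequency shifts makes the individual outputs $(M_{\beta_k}T_{\alpha_k}(h\otimes h))(x,D)\pi(z_k)f_\lambda$ disjointly supported (up to negligible tails), so that their combined norm is the $\ell^{r_2}$-aggregation of $N$ essentially equal contributions. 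After carrying out the bookkeeping (letting $\lambda\to\infty$ first and then $N\to\infty$), the estimate reduces to $N^{1/r}\lesssim N^{1/q}$, yielding $q\le r$.

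The main obstacle is establishing the sharp scalings in the superposition step, in particular the $N^{1/r}$ scaling of the output norm, which requires careful use of the intertwining formulas for $M_\beta T_\alpha$ acting on Weyl/Kohn--Nirenberg symbols together with the STFT identities of Lemmas \ref{propstft} and \ref{STFTSTFT}. Endpoint cases such as $r=\infty$ or $s=\infty$ are handled by the usual density argument of Remark \ref{densglobal}.
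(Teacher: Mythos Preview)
Your first test is computed correctly but, as you acknowledge, it only yields $\tfrac1q\ge\tfrac1r-\tfrac12$. The gap is in the superposition step: the scalings you claim do not hold and the bookkeeping does not close. A sum $f_N=\sum_{k=1}^N\pi(z_k)f_\lambda$ of well-separated time--frequency atoms does \emph{not} satisfy $\|f_N\|_{W(L^r,L^s)}\asymp N^{1/r}$. If the $z_k$ are separated in the spatial direction the pieces lie in distinct boxes and the amalgam norm scales like $N^{1/s}$; if they overlap spatially but are separated in frequency, then $f_N(x)=f_\lambda(x)\sum_k e^{2\pi i\xi_k x}$ and $\|f_N\|_{L^r}$ is governed by square-function/Khinchin-type behaviour, not by $N^{1/r}$. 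In addition, $a_N(x,D)f_N$ contains $N^2$ cross-terms $(M_{\beta_k}T_{\alpha_k}(h\otimes h))(x,D)\,\pi(z_j)f_\lambda$ with $j\ne k$, which you have not shown to be negligible. With the scalings that are actually available, the estimate \eqref{stimacont27} does not reduce to $N^{1/r}\lesssim N^{1/q}$; at best you obtain an inequality of the form $1/r_2\le 1/q+1/s$, which says nothing about $q\le r$.

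The paper's argument sidesteps both difficulties by superposing \emph{only in the symbol} and keeping a single concentrated input $f_N(x)=h(Nx)$, whose norm is trivially $\|f_N\|_{W(L^r,L^s)}\asymp N^{-d/r}$. The symbol is $a_N=\sum_{n\in\Lambda_N}(M_{-n}h_1)(x)(T_nh_2)(\omega)$ with $\widehat{h_1},\widehat{h_2}$ supported in $B(0,1)$ and $\Lambda_N\subset 4\bZ^d$ of cardinality $N^d$; then the functions $V_\Phi b_n$ have pairwise disjoint supports in the frequency variable, so $\|a_N\|_{M^{p,q}}\lesssim N^{d/q}$ follows by straight $\ell^q$-orthogonality, with no almost-orthogonality estimate needed. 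The key observation replacing your superposition is that the integral kernel of $a_N(x,D)$ contains the factor $\sum_{n\in\Lambda_N}e^{-2\pi iny}$; on the support of $f_N$ one has $|y|\lesssim N^{-1}$ while $|n|\lesssim N$, so all $N^d$ exponentials interfere \emph{constructively} and $|a_N(x,D)f_N(x)|\gtrsim1$ for $x$ near the origin. Plugging the three estimates into \eqref{stimacont27} gives $1\lesssim N^{d/q-d/r}$ and hence $q\le r$ directly, with no $\tfrac12$-loss and no cross-terms to control.
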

\begin{proof}
 Let
$h_1,h_2$ be two Schwartz
functions in $\R^d$ such that
$h_1$ and $\widehat{h}_2$ are
real valued, with $h_1(0)=1$,
$\widehat{h_2}(0)=1$, and
satisfying
\begin{equation}\label{h12}
{\rm
supp}\,\widehat{h_1}\subset
B(0,1),\qquad {\rm
supp}\,\widehat{h_2}\subset
B(0,1).
\end{equation}
Consider then, for every
$N\geq1$, the finite lattice
\begin{equation}\label{reticolo0}
\Lambda_N=\{n=(n_1,...n_d)\in4\zd:\
0\leq n_j\leq 4(N-1),\
j=1,...,d\}. \end{equation}
Observe that $\Lambda_N$ has
cardinality $N^d$, and
\begin{equation}\label{reticolo}
|n|\leq d^{1/2}4(N-1),\
\forall n\in\Lambda_N\ \ {\rm
and}\ \ |n-m|\geq4,\ \forall
n,m\in\Lambda_N, n\not=m.
\end{equation}
Moreover, let $h$ be a smooth
real-valued function,
$h\geq0$, $h(0)=1$, supported
in a ball $B(0,\epsilon)$,
for a small $\epsilon$ to be
chosen later. We test the
estimate \eqref{stimacont27}
on the family of functions
$f_N(x)=h(Nx)$ and symbols
\begin{equation}\label{27-0}
a_N(x,\xi)=\sum_{n\in\Lambda_N}b_n(x,\o),\
{\rm where}\
b_n(x,\o)=(M_{-n}h_1)(x)
(T_n h_2)(\o).
\end{equation}
The integral kernel of the
operator $a_N(x,D)$ is given
by
\[
K_N(x,y)=(\Fur^{-1}_2
a_N)(x,x-y)=\sum_{n\in\Lambda_N}\Fur^{-1}_2(b_n)(x,x-y)=\sum_{n\in\Lambda_N}
e^{-2\pi i n y} h_1(x)
\widehat{h_2}(y-x).
\]
We now show that, for a
suitable $\delta>0$,
\begin{equation}\label{27-1}
|a_N(x,D) f_N(x)|\gtrsim 1,\
{\rm for}\ x\in B(0,\delta),
\end{equation}
which implies
\begin{equation}\label{27-2}
\|a_N(x,D)
f_N\|_{W(L^{r_1},L^{r_2})}\gtrsim
1.
\end{equation} In order to
prove \eqref{27-1} observe
that, by the above
computation,
\[
a_N(x,D) f_N(x)=\int_{\rd}
\left(\sum_{n\in\Lambda_N}e^{-2\pi
i n y}\right) h_1(x)
\widehat{h_2}(y-x)h(Ny)\,dy.
\]
Now, as a consequence of the
first condition in
\eqref{reticolo}, we see that
on the support of $h(Ny)$,
hence where $|y|\leq\epsilon
N^{-1}$, we have
\[
{\rm Re}\left(e^{-2\pi i n
y}\right)\geq\frac{1}{2},\quad
\forall n\in\Lambda_N, \] if
$\epsilon\leq d^{-1/2}/24$.
 This implies
that
\[
|a_N(x,D) f_N(x)|\geq
\frac{N^d}{2}\int_{\rd}
h_1(x)
\widehat{h_2}(y-x)h(Ny)\,dy.
\]
Since
$h_1(0)=\widehat{h_2}(0)=1$,
if $\delta$ and $\epsilon$
are small enough, so as
$h_1(x)\geq1/2$ and
$\widehat{h_2}(y-x)\geq{1/2}$
for $|y|\leq\epsilon$,
$|x|\leq\delta$. It turns out
that
\[
 |a_N(x,D)
f_N(x)|\geq
\frac{N^d}{8}\int_{\rd}
h(Ny)\,dy=\frac{\|h\|_1}{8},\qquad{\rm
for}\ |x|\leq\delta,
\]
which implies
\eqref{27-1}.\par We now
prove that
\begin{equation}\label{27-4}
\|a_N\|_{M^{p,q}}\lesssim
N^{d/q}.
\end{equation}
To see this, observe that
\begin{equation}\label{27-4b}
\widehat{b_n}(\zeta_1,\zeta_2)
=(T_{-n}\widehat{h_1})(\zeta_1)(M_{-n}\widehat{h_2})
(\zeta_2).
\end{equation}
We choose a window $\Phi=\f\otimes\f$,
where $\hat{\f}$ is a Schwartz function
supported in $B(0,1)$. It follows from
\eqref{27-4b}, the second condition in
\eqref{reticolo} and \eqref{h12}, that
the functions
$V_{\Phi}{b_n}(z_1,z_2,\omega_1,\omega_2)=
\left(\widehat{b_n}\ast
M_{-z}\widehat{\Phi}^\ast\right)(\omega)$
(with $z=(z_1,z_2), $
$\omega=(\omega_1,\omega_2)$, $\Phi^\ast(z)=\overline{\Phi(-z)}$), vanish
unless $\omega_1\in B(-n,2)$,
$\omega_2\in B(0,2)$. Hence, when $n$
varies in $\Lambda_N$, they have
pairwise disjoint supports, as well as
the functions
$\|V_{\Phi}{b_n}(\cdot,\cdot,\omega_1,\omega_2)\|_{p}$.
We deduce that
\begin{align}
\|{a_N}\|_{M^{p,q}}
&\asymp\left(\int_{\R^{2d}}
\|\sum_{n\in\Lambda_N}V_{\Phi}{b_n}(\cdot,\cdot,\omega_1,\omega_2)\|_{p}^q\,d
\omega_1\,d\omega_2\right)^{1/q}\nonumber\\
&=\left(\int_{\R^{d}}
\left(\sum_{n\in\Lambda_N}\|V_{\Phi}{b_n}(\cdot,\cdot,\omega_1,\omega_2)
\|_{p}^p\right)^{q/p}\,d
\omega_1\,d\omega_2\right)^{1/q}\nonumber\\
&= \left(
\int_{\R^{2d}}\sum_{n\in\Lambda_N}
\|V_{\Phi}{b_n}(\cdot,\cdot,\omega_1,\omega_2)\|_{p}^q\,d
\omega_1\,d\omega_2 \right)^{1/q}\nonumber\\
&= \left(\sum_{n\in\Lambda_N}
\int_{B(-n,2)}\int_{B(0,2)}
\|V_{\Phi}{b_n}(\cdot,\cdot,\omega_1,\omega_2)\|_{p}^q\,d
\omega_1\,d\omega_2
\right)^{1/q}.\label{27-3}
\end{align}
On the other hand, since
$V_{\Phi}{b_n}(z,\omega)=e^{-2\pi
iz\omega} \left({b_n}\ast
M_{\omega}{\Phi}^\ast\right)(z)$,
by Young's inequality, we
have
\[
\|V_{\Phi}{b_n}(\cdot,\cdot,\omega_1,\omega_2)\|_{p}\leq
\|b_n\|_p,
\]
and the expression for $b_n$
in \eqref{27-0} shows that
$\|b_n\|_p$ is in fact
independent of $n$. Hence the
expression in \eqref{27-3} is
\[
\lesssim
\left(\sum_{n\in\Lambda_N}
\int_{B(-n,2)}\int_{B(0,2)}
\,d z_1\,dz_2
\right)^{1/q}=C_d N^{d/q},
\]
which gives \eqref{27-4}.\par
Finally, since the functions
$f_N$ are supported in a
fixed compact subset, we have
\begin{equation}\label{27-5}
\|f_N\|_{W(L^r,L^s)}\asymp
\|f_N\|_r=\|h\|_r N^{-d/r}.
\end{equation}
Combining this estimate with
\eqref{27-2}, \eqref{27-4}
and \eqref{stimacont27}, and
letting $N\to\infty$ yields
the desired constraint $q\leq
r$.
\end{proof}

\begin{theorem}\label{t41bis}
Suppose that, for some $1\leq
p,q,r,s\leq\infty$, the
estimate
\begin{equation}\label{stimacontbis}
\|a(x,D)f\|_{W(L^{r},L^{s})}\leq
C
\|a\|_{M^{p,q}}\|f\|_{W(L^{r},L^{s})},\qquad
\forall a\in\cS(\R^{2d}),\
f\in\cS(\R^d),
\end{equation}
holds. Then $q\leq
\min\{r,r',s,s'\}$ and
\begin{equation}\label{1-1}
\frac{1}{p}\geq\left|\frac{1}{2}-\frac{1}{r}\right|+\frac{1}{q'}.
\end{equation}
\end{theorem}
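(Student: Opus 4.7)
The proof will be a direct synthesis of the two necessary condition results, Proposition \ref{t41} and Proposition \ref{t27}, together with the duality Lemma \ref{dualita1}. The only mildly technical point is moving between a global $W(L^r,L^s)$ estimate on $a(x,D)f$ and the localized $L^r$ estimate $\|\chi\, a(x,D)f\|_r$ required by Proposition \ref{t41}.

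\emph{Step 1 (the constraints $q\le r$ and $q\le s'$).} Proposition \ref{t27}, applied with $r_1=r$ and $r_2=s$, immediately yields $q\le r$ from the hypothesis \eqref{stimacontbis}. To obtain $q\le s'$ and the bound $1/p\ge 1/2-1/r+1/q'$, I first pass from \eqref{stimacontbis} to a localized estimate. Fix $\chi\in\cC^\infty_0(\R^d)$ with $\chi\ge 0$, $\chi(0)=1$. Because $\chi$ has compact support, $\chi\cdot a(x,D)f$ is supported in a fixed compact set $K$; hence, by the trivial direction of the local equivalence (Lemma \ref{new} type argument, or just by the definition of the Wiener amalgam norm),
\[
\|\chi\, a(x,D)f\|_{L^r}\lesssim \|a(x,D)f\|_{W(L^r,L^s)}\lesssim \|a\|_{M^{p,q}}\|f\|_{W(L^r,L^s)}.
\]
This is exactly an estimate of the form \eqref{stimacont} with $r_1=r$, $r_2=s$. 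Proposition \ref{t41} therefore gives $q\le s'$ and
\[
\frac{1}{p}\ge \frac{1}{2}-\frac{1}{r}+\frac{1}{q'}.
\]

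\emph{Step 2 (the dual constraints $q\le r'$ and $q\le s$).} By Lemma \ref{dualita1}, the estimate \eqref{stimacontbis} also holds with $(r,s)$ replaced by $(r',s')$. Repeating Step 1 verbatim with these new exponents produces $q\le r'$, $q\le s$, and
\[
\frac{1}{p}\ge \frac{1}{2}-\frac{1}{r'}+\frac{1}{q'}=\frac{1}{r}-\frac{1}{2}+\frac{1}{q'}.
\]

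\emph{Step 3 (combine).} The four inequalities from Steps 1 and 2 merge into $q\le\min\{r,r',s,s'\}$, and the two lower bounds on $1/p$ combine into
\[
\frac{1}{p}\ge\max\Bigl\{\tfrac{1}{2}-\tfrac{1}{r},\,\tfrac{1}{r}-\tfrac{1}{2}\Bigr\}+\frac{1}{q'}=\left|\frac{1}{2}-\frac{1}{r}\right|+\frac{1}{q'},
\]
which is \eqref{1-1}. The main obstacle is really only Step 1: one must notice that Proposition \ref{t41} applies to a \emph{localized} $L^r$ norm, so that the passage from the $W(L^r,L^s)$ estimate to the $\chi$-localized $L^r$ estimate is legitimate; everything else is a direct invocation of already proved facts.
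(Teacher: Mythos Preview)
Your proof is correct and follows essentially the same route as the paper: invoke Proposition~\ref{t27} for $q\le r$, localize with a cutoff $\chi$ (using that compactly supported functions satisfy $\|u\|_{W(L^r,L^s)}\asymp\|u\|_r$) to reduce to Proposition~\ref{t41} for $q\le s'$ and half of \eqref{1-1}, and then apply the duality Lemma~\ref{dualita1} to obtain the remaining constraints. The only difference from the paper is the order in which duality is applied, which is immaterial.
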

\begin{proof} We already know
 from Proposition \ref{t27} that
 $q\leq r$. The constraints
 $q\leq r'$ follows by
 duality arguments, namely by Lemma \ref{dualita1}. \par
 Now, we shall prove that
\begin{equation}\label{28-0}
q\leq s',\qquad
\frac{1}{p}\geq
\frac{1}{2}-\frac{1}{r}+\frac{1}{q'}.
\end{equation}
The remaining  constraints will follow by
 duality as above.\par
Let
$\chi\in\cC^\infty_0(\R^d)$,
with $0\leq \chi\leq 1$,
$\chi(0)=1$. Then
\eqref{stimacontbis} implies
\[
\|\chi a(x,D)
f\|_{W(L^r,L^s)}\leq C
\|a\|_{M^{p,q}}\|f\|_{W(L^{r},L^{s})},\,\quad
\forall f\in \cS(\R^d),\
\forall a\in\cS(\R^{2d}).
\]
Since for functions $u$
supported in a fixed compact
subset we have
$\|u\|_{W(L^{r},L^{s})}\asymp
\|u\|_{r}$, we deduce
\[
\|\chi a(x,D) f\|_{r}\leq C
\|a\|_{M^{p,q}}\|f\|_{W(L^{r},L^{s})}\,\quad
\forall f\in \cS(\R^d),\
\forall a\in\cS(\R^{2d}).
\]
Then Proposition \ref{t41} implies
\eqref{28-0}.
\end{proof}
\begin{proposition}\label{t41bis-1}
Suppose that, for some $1\leq
p,q,r,s\leq\infty$, every symbol $a\in
M^{p,q}$ gives rise to an operator
$a(x,D)$ bounded on $W(L^r,L^s)$. Then
the constraints $q\leq
\min\{r,r',s,s'\}$ and \eqref{1-1} must
hold.
\end{proposition}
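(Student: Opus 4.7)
The proposition is purely a qualitative-to-quantitative upgrade: given the mere hypothesis that each $a\in M^{p,q}$ yields a bounded operator on $W(L^r,L^s)$, we must produce a uniform estimate of the form \eqref{stimacontbis} in order to invoke Theorem \ref{t41bis}. The natural tool is the closed graph theorem. To keep the $r=\infty$ or $s=\infty$ case under control (where $\cS$ is not dense in $W(L^r,L^s)$), I work directly with the definition of boundedness recalled at the beginning of Section \ref{section4}.

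The plan is as follows. Let $X$ be the Banach space of linear maps $T\colon\cS(\R^d)\to W(L^r,L^s)(\R^d)$ with finite norm $\|T\|_X=\sup\{\|Tf\|_{W(L^r,L^s)}:f\in\cS,\ \|f\|_{W(L^r,L^s)}\leq 1\}$; completeness of $X$ follows from completeness of the target. By assumption, the linear map $\Phi\colon M^{p,q}(\R^{2d})\to X$, $a\mapsto a(x,D)$, is well defined. Since $M^{p,q}$ is a Banach space, it suffices to verify that $\Phi$ has closed graph.

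Assume $a_n\to a$ in $M^{p,q}$ and $\Phi(a_n)\to T$ in $X$. For every fixed $f\in\cS(\R^d)$, convergence in $X$ gives $a_n(x,D)f\to Tf$ in $W(L^r,L^s)$, hence in $\cS'(\R^d)$. On the other hand, the continuous embedding $M^{p,q}\hookrightarrow\cS'(\R^{2d})$ combined with the continuity of the pseudodifferential action $b\mapsto b(x,D)f$ from $\cS'(\R^{2d})$ to $\cS'(\R^d)$ (evident from \eqref{KN} by testing against a Schwartz function) yields $a_n(x,D)f\to a(x,D)f$ in $\cS'$. Uniqueness of limits forces $T=\Phi(a)$, so $\Phi$ is closed.

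The closed graph theorem then produces a constant $C>0$ with $\|a(x,D)f\|_{W(L^r,L^s)}\leq C\|a\|_{M^{p,q}}\|f\|_{W(L^r,L^s)}$ for all $a\in M^{p,q}$ and $f\in\cS$. Restricting this estimate to $a\in\cS(\R^{2d})\hookrightarrow M^{p,q}$ supplies exactly hypothesis \eqref{stimacontbis} of Theorem \ref{t41bis}, which in turn delivers the required constraints $q\leq\min\{r,r',s,s'\}$ and \eqref{1-1}. No step presents genuine difficulty; the only point deserving attention is the verification of closedness, which rests on the elementary continuity of $(b,f)\mapsto b(x,D)f$ on $\cS'\times\cS$.
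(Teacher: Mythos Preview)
Your proof is correct and follows essentially the same approach as the paper: both apply the Closed Graph Theorem to the map $a\mapsto a(x,D)$ (the paper uses $B(\mathcal{W}(L^r,L^s),W(L^r,L^s))$ as codomain, where $\mathcal{W}(L^r,L^s)$ is the closure of $\cS$ in $W(L^r,L^s)$, which is isometrically the same as your space $X$), verify closedness via the weak identity $\langle a(x,D)f,g\rangle=\langle a,G\rangle$ for a fixed Schwartz $G$, and then invoke Theorem~\ref{t41bis}.
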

\begin{proof}
Let $\mathcal{W}(L^r,L^s)$ be the
closure of $\cS(\R^d)$ in
${W}(L^r,L^s)$. By assumption the map
\[
M^{p,q}\ni a\longmapsto a(x,D)\in
B(\mathcal{W}(L^r,L^s),{W}(L^r,L^s))
\]
is well defined. By an application of
the Closed Graph Theorem and Theorem
\ref{t41bis} we see that the desired
conclusion follows if we prove that
this map has closed graph. To this end,
let $a_n\to a$ in $M^{p,q}$, with
$a_n(x,D)\to A$ in
$B(\mathcal{W}(L^r,L^s),{W}(L^r,L^s))$.
We have to prove that $A=a(x,D)$, i.e.
$\langle A
f,g\rangle=\la a(x,D)f,g\rangle$ $\forall
f,g\in\cS(\R^d)$. Now, clearly $
\langle a_n(x,D) f,g\rangle\to \langle
Af,g\rangle$. On the other hand $
\langle a_n(x,D) f,g\rangle=\langle
a_n, G\rangle$, where $G(x,\omega)=
e^{-2\pi i x\omega}
\overline{\hat{f}(\omega)} g(x)$ is a
fixed Schwartz function. Hence $\langle
a_n(x,D) f,g\rangle$ tends to $\langle
a, G\rangle=\langle a(x,D) f,g\rangle$,
which concludes the proof.
\end{proof}
\section{Boundedness on
modulation spaces}\label{section5} In
the present section we show the full
range of exponents $1\leq
p,q,r,s\leq\infty$ such that every
symbol in $M^{p,q}$ gives rise to a
bounded operator on $M^{r,s}$. Again,
to avoid the fact that $\cS(\R^d)$ is
not dense in $M^{r,s}(\rd)$ if
$r=\infty$ or $s=\infty$, we say that a
pseudodifferential operator $a(x,D)$ is
bounded on $M^{r,s}(\rd)$ if there
exists a constant $C>0$ such that
$\|a(x,D) f\|_{M^{r,s}} \le
C\|f\|_{M^{r,s}}$, for all $f \in
\cS(\R^d)$.\par
 We need the following auxiliary
 result.
 \begin{lemma}\label{dualita2}
Suppose that for some $1\leq p,q,
r,s\leq\infty$ the following estimate
holds:
\[
\|L_\sigma f\|_{M^{r,s}}\leq
C\|\sigma\|_{M^{p,q}}\|f\|_{M^{r,s}},\
\ \forall \sigma\in \cS(\R^{2d}),\
\forall f\in \cS(\R^d).\] Then the same
estimate is satisfied with $r,s$
replaced by $r',s'$ (even if $r=\infty$
or $s=\infty$).
\end{lemma}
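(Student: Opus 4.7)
The plan is to mimic the proof of Lemma \ref{dualita1} verbatim, with the modulation spaces $M^{r,s}$, $M^{r',s'}$ playing the role of the Wiener amalgams $W(L^r,L^s)$, $W(L^{r'},L^{s'})$, and Lemma \ref{mo}(ii) taking the place of Lemma \ref{WA}(iv). The starting point is the formal adjoint relation $L_\sigma^*=L_{\overline{\sigma}}$ for Weyl operators, which follows directly from the definition $\langle L_\sigma f,g\rangle=\langle\sigma,W(g,f)\rangle$ and the identity $W(f,g)=\overline{W(g,f)}$. Together with the conjugation invariance $\|\overline{\sigma}\|_{M^{p,q}}\asymp\|\sigma\|_{M^{p,q}}$ (obtained from the STFT relation $V_g\overline{\sigma}(x,\omega)=\overline{V_{\overline{g}}\sigma(x,-\omega)}$ and the equivalence of norms for different windows, the constant being harmlessly absorbed in $C$), the assumption applied to $L_{\overline{\sigma}}$ gives
\[
\|L_{\overline{\sigma}}g\|_{M^{r,s}}\leq C\|\sigma\|_{M^{p,q}}\|g\|_{M^{r,s}},\qquad\forall g\in\cS(\R^d).
\]

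Next, I would combine this with the basic pairing inequality
\[
|\langle f,h\rangle|\leq\|f\|_{M^{r',s'}}\|h\|_{M^{r,s}},\qquad\forall f,h\in\cS(\R^d),
\]
valid for any $1\leq r,s\leq\infty$ and reflecting the isometric embedding of $M^{r',s'}$ into the dual of the closure of $\cS(\R^d)$ in $M^{r,s}$. This yields, for every $f,g\in\cS(\R^d)$,
\[
|\langle L_\sigma f,g\rangle|=|\langle f,L_{\overline{\sigma}}g\rangle|\leq C\|\sigma\|_{M^{p,q}}\|f\|_{M^{r',s'}}\|g\|_{M^{r,s}}.
\]

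Finally, since $L_\sigma f$ belongs to $\cS(\R^d)\subset M^{r',s'}$ and $M^{r',s'}$ sits isometrically in the dual of the Schwartz closure in $M^{r,s}$, the $M^{r',s'}$-norm of $L_\sigma f$ equals the supremum of $|\langle L_\sigma f,g\rangle|$ over Schwartz functions $g$ with $\|g\|_{M^{r,s}}\leq 1$. Taking this supremum in the above estimate gives exactly the desired conclusion.

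The only delicate point is this last identification when $r=\infty$ or $s=\infty$, since Lemma \ref{mo}(ii) is stated only for finite indices. This is a standard fact for modulation spaces, but if one prefers not to invoke it directly, one can proceed precisely as in the footnote of Lemma \ref{dualita1}: for any $g\in M^{r,s}$, build Schwartz approximants $g_n$ with $g_n\to g$ in $\cS'(\R^d)$ and $\|g_n\|_{M^{r,s}}\leq\|g\|_{M^{r,s}}$ (e.g.\ via dilation-and-convolution with bumps, exactly as in the cited footnote, since such regularizations are continuous on every modulation space), and pass to the limit in the pairing estimate.
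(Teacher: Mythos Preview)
Your proposal is correct and follows exactly the approach the paper indicates: the paper's proof of Lemma~\ref{dualita2} simply says to repeat the proof of Lemma~\ref{dualita1} with $M^{r,s}$ in place of $W(L^r,L^s)$, which is precisely what you do. Your treatment of the adjoint relation, the conjugation invariance of the $M^{p,q}$-norm, the duality pairing, and the density/approximation argument for the endpoint cases all match the paper's intended argument.
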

\begin{proof}
The proof goes exactly as that of Lemma
\ref{dualita1}. It suffices to replace
everywhere the spaces $W(L^r,L^s)$ with
$M^{r,s}$.
\end{proof}
\begin{theorem}\label{1a}
Let $1\leq p,q,r,s\leq\infty$
such that
\begin{equation}\label{limitazionilp3}
p\leq q',\qquad
q\leq\min\{r,r',s,s'\}.
\end{equation}
Then, every symbol $a\in
M^{p,q}$ gives rise to a
bounded operator on
$M^{r,s}$, with the uniform
estimate
\[
\|a(x,D)f\|_{M^{r,s}}\lesssim
\|a\|_{M^{p,q}}\|f\|_{M^{r,s}}.
\]
\end{theorem}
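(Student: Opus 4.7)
The plan is to interpolate between two well-established endpoint boundedness results on modulation spaces. The first endpoint is the classical theorem of Gr\"ochenig (see \cite{book,GH99} and recalled in the Introduction): every symbol in $M^{\infty,1}(\rdd)$ gives rise to an operator bounded on $M^{r_1,s_1}(\rd)$, for every $1\leq r_1,s_1\leq\infty$, with uniform estimates. The second endpoint is the Hilbert--Schmidt case: if $a\in M^{2,2}(\rdd)=L^2(\rdd)$, then the integral kernel of $a(x,D)$ lies in $L^2(\R^d\times\R^d)$ with the same norm (by Plancherel), so $\|a(x,D)f\|_{M^{2,2}}\leq \|a\|_{M^{2,2}}\|f\|_{M^{2,2}}$.

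Applying complex interpolation via Lemma \ref{mo}(iii) to the bilinear map $(a,f)\mapsto a(x,D)f$ between these two endpoints, with interpolation parameter $\theta\in[0,1]$ and free auxiliary parameters $r_1,s_1\in[1,\infty]$, yields the uniform estimate $\|a(x,D)f\|_{M^{r,s}}\lesssim \|a\|_{M^{p,q}}\|f\|_{M^{r,s}}$ provided that
\[
\tfrac{1}{p}=\tfrac{\theta}{2},\qquad \tfrac{1}{q}=1-\tfrac{\theta}{2},\qquad \tfrac{1}{r}=\tfrac{1-\theta}{r_1}+\tfrac{\theta}{2},\qquad \tfrac{1}{s}=\tfrac{1-\theta}{s_1}+\tfrac{\theta}{2}.
\]
In particular, the first two relations force $p=q'$ and $q\in[1,2]$. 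A direct computation then shows that, as $\theta$ varies in $[0,1]$ and $r_1,s_1$ in $[1,\infty]$, this parametrization covers exactly the quadruples with $p=q'$, $q\in[1,2]$, and $q\leq\min\{r,r',s,s'\}$: setting $\theta=2(1-1/q)$, the requirement $r_1\in[1,\infty]$ translates into $r\in[q,q']$, i.e., $q\leq\min\{r,r'\}$, and similarly for $s$.

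The remaining cases with strict inequality $p<q'$ follow from the inclusion $M^{p,q}\hookrightarrow M^{q',q}$ of Lemma \ref{mo}(i). Note that the hypothesis $q\leq\min\{r,r',s,s'\}$ automatically forces $q\leq 2$, since $\min\{r,r'\}\leq 2$ for every $r\in[1,\infty]$; hence the interpolation range $q\in[1,2]$ already exhausts all admissible exponents. The main subtlety to handle is the possible lack of density of $\cS(\R^d)$ in $M^{r,s}$ when $r$ or $s$ equals $\infty$; this is circumvented by the variant of Lemma \ref{mo}(iii) that interpolates within the closure of the Schwartz class in each modulation space, which is sufficient since boundedness in the present section is defined on $\cS(\R^d)$ only. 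Where needed, the cases with $r=\infty$ or $s=\infty$ can alternatively be recovered from the others via the duality argument of Lemma \ref{dualita2}.
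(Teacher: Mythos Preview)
Your proof is correct and follows essentially the same approach as the paper: both interpolate the bilinear map $(a,f)\mapsto a(x,D)f$ between the Gr\"ochenig endpoint $(p,q)=(\infty,1)$ (with arbitrary $r_1,s_1$) and the Hilbert--Schmidt endpoint $p=q=r=s=2$, then use the inclusion $M^{p,q}\hookrightarrow M^{q',q}$ for $p<q'$. The paper's proof is stated in one sentence without spelling out the parametrization or the density/duality issues at $r,s\in\{1,\infty\}$, whereas you make these details explicit; in substance the arguments coincide.
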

\begin{proof}
The desired conclusion follows at once
by interpolation (see Lemma \ref{mo}
(iii)) from the known cases
$(p,q)=(\infty,1)$, $1\leq
r,s\leq\infty$ (see \cite[Corollary
14.5.5]{book}), and $p=q=r=s=2$ (see
\cite[Theorem 14.6.1]{book}).
\end{proof}
\begin{proposition}\label{1-2}
Suppose that, for some $1\leq
p,q,r,s\leq\infty$, $C>0$ the
estimate
\begin{equation}\label{stimamod}
\|a(x,D)f\|_{M^{r,s}}\leq C
\|a\|_{M^{p,q}}\|f\|_{M^{r,s}}\qquad
\forall a\in\cS(\R^{2d}),\
f\in\cS(\R^d)
\end{equation}
holds. Then the constraints
in \eqref{limitazionilp3}
must hold.
\end{proposition}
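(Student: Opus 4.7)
The plan is to test the estimate \eqref{stimamod} on three families of symbols and functions already introduced in Propositions \ref{t27} and \ref{t41}. The crucial simplification available for modulation spaces is Lemma \ref{lloc}: for a tempered distribution $u$ supported in a fixed compact set one has $\|u\|_{M^{r,s}}\asymp\|\widehat u\|_{s}$, while for a band-limited $u$ one has $\|u\|_{M^{r,s}}\asymp\|u\|_{r}$. By Lemma \ref{dualita2} it suffices to establish the three independent constraints $q\leq s'$, $q\leq r'$, and $p\leq q'$; the remaining $q\leq s$ and $q\leq r$ then follow by duality.

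To prove $q\leq s'$, I apply \eqref{stimamod} to the pair $(a_N,f_N)$ of Proposition \ref{t27}. Since $f_N(x)=h(Nx)$ is compactly supported in a fixed ball, Lemma \ref{lloc}(i) gives $\|f_N\|_{M^{r,s}}\asymp\|\widehat{f_N}\|_{s}\asymp N^{-d/s'}$. The output $a_N(x,D)f_N$ is also compactly supported in $\supp h_1$, and using the representation derived in the proof of Proposition \ref{t27} together with a Riemann-sum argument, it converges pointwise, as $N\to\infty$, to a fixed non-zero Schwartz function of the form $c\,h_1(x)\widehat{h_2}(-x)$. Hence $\|a_N(x,D)f_N\|_{M^{r,s}}\gtrsim 1$ uniformly in $N$, which combined with $\|a_N\|_{M^{p,q}}\lesssim N^{d/q}$ from Proposition \ref{t27} forces $1\lesssim N^{d(1/q-1/s')}$, whence $q\leq s'$.

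For $q\leq r'$, I apply \eqref{stimamod} to the pair $a_\lambda(x,\omega)=h(x)h_\lambda(\omega)$, $f_\lambda=\Fur^{-1}(\overline{h_\lambda})$ used in the first half of the proof of Proposition \ref{t41}. The function $f_\lambda$ is band-limited, with $\widehat{f_\lambda}=\overline{h_\lambda}$ supported in $\supp h$, so Lemmata \ref{lloc}(ii) and \ref{l1} yield $\|f_\lambda\|_{M^{r,s}}\asymp\|f_\lambda\|_{r}=\|\widehat{h_\lambda}\|_{r}\asymp\lambda^{d/r-d/2}$. A direct computation gives $a_\lambda(x,D)f_\lambda(x)=h(x)\,\Fur(|h|^2)(-x)$, a fixed non-zero compactly supported function, so $\|a_\lambda(x,D)f_\lambda\|_{M^{r,s}}\asymp 1$. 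The tensor-product structure of modulation spaces yields $\|a_\lambda\|_{M^{p,q}}\asymp\|h\|_{M^{p,q}}\|h_\lambda\|_{M^{p,q}}\asymp\lambda^{d/q-d/2}$, where we used again Lemmata \ref{lloc}(i) and \ref{l1}. Substituting into \eqref{stimamod} and letting $\lambda\to\infty$ forces $d/q+d/r\geq d$, i.e.\ $q\leq r'$.

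For $p\leq q'$, I apply \eqref{stimamod} to the pair $(a'_\lambda,f'_\lambda)$ used in the second half of the proof of Proposition \ref{t41}. Both $f'_\lambda=\overline{h_\lambda}$ and the output $a'_\lambda(x,D)f'_\lambda$ are compactly supported, so Lemma \ref{lloc}(i) reduces their $M^{r,s}$-norms to $\Fur L^s$-norms of their Fourier transforms. Lemma \ref{l1} gives $\|f'_\lambda\|_{M^{r,s}}\asymp\lambda^{d/s-d/2}$; the explicit kernel computation in the proof of Proposition \ref{t41} identifies $a'_\lambda(x,D)f'_\lambda(x)$, for $\lambda$ large, with (essentially) the chirped Gaussian $C\,e^{-\pi\lambda^2|x|^2-\pi i\lambda|x|^2}$, whose Fourier transform has $L^s$-norm of order $\lambda^{-d/s'}$, so that $\|a'_\lambda(x,D)f'_\lambda\|_{M^{r,s}}\asymp\lambda^{-d/s'}$. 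Combined with $\|a'_\lambda\|_{M^{p,q}}\lesssim\lambda^{d/p-d/2-d/q'}$ from Proposition \ref{t41} and $1/s+1/s'=1$, estimate \eqref{stimamod} simplifies to $1/q'\leq 1/p$. The principal technical point is precisely this last identification: one needs to upgrade the pointwise lower bound $|a'_\lambda(x,D)f'_\lambda|\gtrsim 1$ on $|x|\leq\lambda^{-1}$ (already used in Proposition \ref{t41}) to the matching $\Fur L^s$ lower bound on the Fourier transform, which does not follow directly from pointwise estimates and must be obtained by identifying the limiting Gaussian profile of the output.
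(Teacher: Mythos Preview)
Your arguments for $q\leq r'$ and $q\leq s'$ are sound. The latter differs from the paper, which instead conjugates the Weyl operator by the Fourier transform to swap the roles of $r$ and $s$; your direct use of the $(a_N,f_N)$ family works too, once you invoke lower semicontinuity of the $M^{r,s}$-norm under the pointwise convergence $a_N(x,D)f_N\to c\,h_1\widehat{h_2}(-\cdot)$ (e.g.\ via Fatou applied to $|V_\varphi(\cdot)|$). That step is implicit in your ``Hence'' but should be stated.

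The genuine gap is in your treatment of $p\leq q'$. You correctly identify the crux---the lower bound $\|a'_\lambda(x,D)f'_\lambda\|_{M^{r,s}}\gtrsim\lambda^{-d/s'}$---but you do not establish it; you only say it ``must be obtained''. Moreover, your description of the output as ``(essentially) the chirped Gaussian $C\,e^{-\pi\lambda^2|x|^2-\pi i\lambda|x|^2}$'' is misleading: writing
\[
a'_\lambda(x,D)f'_\lambda(x)=e^{-\pi\lambda^2|x|^2-\pi i\lambda|x|^2}\,G_\lambda(x),\qquad G_\lambda(x)=\int h(x-y)h(y)\,e^{2\pi i\lambda xy}\,dy,
\]
the dominant localisation to $|x|\lesssim\lambda^{-1}$ comes from the oscillatory factor $G_\lambda$, not from the Gaussian envelope. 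One \emph{can} rescue the argument by rescaling $z=\lambda x$, showing $a'_\lambda(x,D)f'_\lambda(z/\lambda)\to e^{-\pi|z|^2}\widehat{h^2}(z)$ in $L^1$, and then applying Fatou on the Fourier side to get $\|\widehat{u_\lambda}\|_s\gtrsim\lambda^{-d/s'}$; but this is real work that you omit.

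The paper sidesteps this difficulty entirely by testing, for $p\leq q'$, on a \emph{different} family: the pure Gaussians $a'_\lambda(x,\omega)=\phi(\lambda x)\phi(\lambda^{-1}\omega)$ and $f'_\lambda(x)=\phi(\lambda x)$ with $\phi(t)=e^{-\pi|t|^2}$. An exact scaling identity gives $a'_\lambda(x,D)f'_\lambda(x)=(a'_1(x,D)\phi)(\lambda x)$, so all three norms are computed directly from the Gaussian formulas in \cite{cordero2} with no asymptotic analysis at all. This is both simpler and complete.
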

\begin{proof}
We first prove that
$q\leq\min\{r,r'\}$. The estimate $q\leq r'$
follows by testing
\eqref{stimamod} on the same
families of symbols and
functions as in the first
part of the proof of Proposition
\ref{t41}, taking into
account that, since the
functions $f_\lambda$
considered there have Fourier
transform supported in a
fixed compact set, it turns
out
$\|f_\lambda\|_{M^{r,s}}\asymp \|f_\lambda\|_{r}$. Precisely, using Lemma \ref{l1},
$$\|f_\lambda\|_{M^{r,s}}\asymp \|f_\lambda\|_{r}\asymp \lambda^{\frac d r-\frac d2}.$$
Moreover,
$\|a_\lambda\|_{\mpq}\lesssim\lambda^{\frac
d q-\frac d2}$ and
\[
a_\lambda(x,D)
f_\lambda(x)=\int e^{2\pi
ix\o}h(x)
h^2(\o)\,d\o,
\]
which is a non-zero Schwartz function
independent of $\lambda$, so that,
letting $\lambda\rightarrow+\infty$ in
\eqref{stimamod}, we get $q\leq r'$.
The constraint $q\leq r$ is obtained
using duality arguments, i.e. by Lemma
\ref{dualita2}.
\par Let us now prove that
$q\leq\min\{s,s'\}$. Again, we can
consider the Weyl quantization
$a(x,D)=L_\sigma$ in place of the
Kohn-Nirenberg one. Then we conjugate
the operator $L_\sigma$ with the
Fourier transform. An explicit
computation shows that $\Fur^{-1}
L_\sigma\Fur=L_{\sigma\circ\chi}$,
where $\chi(x,\o)=(\o,-x)$. On the
other hand, the map $\sigma\longmapsto
\sigma\circ\chi$ is an isomorphism of
$M^{p,q}$, so that \eqref{stimamod} is
in fact equivalent to
\[
\|a(x,D)f\|_{W(\Fur
L^r,L^s)}\lesssim
\|a\|_{M^{p,q}}\|f\|_{W(\Fur
L^r,L^s)}\qquad \forall
a\in\cS(\R^{2d}),\
f\in\cS(\R^d)
\]
where we came back to the
Kohn-Nirenberg quantization. Then one
can test this last estimate again on
the same families of symbols and
functions as in the first part of the
proof of Proposition \ref{t41} (and in
the first part of this proof), taking
into account that, since the functions
$f_\lambda$ have Fourier transform
supported in a fixed compact set, it
turns out $\|f_\lambda\|_{W(\Fur
L^r,L^s)}\asymp \|f_\lambda\|_{s}$ (in
fact, this amounts to saying
$\|\widehat{f_\lambda}\|_{M^{r,s}}\asymp
\|\widehat{f_\lambda}\|_{\Fur L^s}$;
see Lemma \ref{lloc} (i)). Hence we get
$q\leq s'$. By duality as above then
$q\leq s$ follows.\par We finally prove
the constraint $p\leq q'$. Let
$\phi(t)=e^{-\pi|t|^2}$, $t\in\R^d$. We
test \eqref{stimamod} on the family of
symbols
\[
a'_\lambda(x,\o)=
\phi(\lambda
x)\phi(\lambda^{-1}\o),
\]
and functions
\[
f'_\lambda(x)=\phi(\lambda
x),
\]
with $\lambda\geq1$. By
\cite[Lemma 3.2]{cordero2} we
have
\[
\|a'_\lambda\|_{M^{p,q}}\asymp
\lambda^{\frac{d}{p}-\frac{d}{q'}},
\]
and
\[
\|f_\lambda\|_{M^{r,s}}\asymp\lambda^{-\frac{d}{s'}}.
\]
On the other hand, an
explicit computation gives
\[
a'_\lambda(x,D)f'_\lambda(x)=\left(a'_1(x,D)
\phi\right)(\lambda x),
\]
where $a_1(x,D) \phi$ is
still a Gaussian function.
Hence, again by \cite[Lemma
3.2]{cordero2}
\[
\|a'_\lambda(x,D)f'_\lambda\|_{M^{r,s}}
\asymp\lambda^{-\frac{d}{s'}}.
\]
Taking into account these estimates and
letting $\lambda\to+\infty$ we obtain
the desired constraint $p\leq q'$.
\end{proof}
\begin{proposition}\label{1-3}
Suppose that, for some $1\leq
p,q,r,s\leq\infty$, every symbol in
$M^{p,q}$ gives rise to a bounded
operator on $M^{r,s}$. Then the
constraints in \eqref{limitazionilp3}
must hold.
\begin{proof}
The proof is essentially the same as
that of Proposition \ref{t41bis-1},
relying on the Closed Graph Theorem and
Proposition \ref{1-2}. We leave the
details to the reader.
\end{proof}
\end{proposition}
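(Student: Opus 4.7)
The plan is to mirror the argument of Proposition \ref{t41bis-1} verbatim, with modulation spaces replacing Wiener amalgam spaces. That is, I would upgrade the merely pointwise statement ``every symbol in $M^{p,q}$ produces a bounded operator on $M^{r,s}$'' to the uniform estimate \eqref{stimamod}, and then feed this estimate into Proposition \ref{1-2} to extract all the necessary conditions.

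More concretely, let $\mathcal{M}^{r,s}$ denote the closure of $\cS(\R^d)$ in $M^{r,s}$ (this detour is needed because $\cS$ fails to be dense in $M^{r,s}$ when $r$ or $s$ equals $\infty$). By hypothesis, for each $a\in M^{p,q}$ the operator $a(x,D)$, initially defined on $\cS(\R^d)$, extends by the given bound to a bounded linear operator from $\mathcal{M}^{r,s}$ into $M^{r,s}$. This defines a linear map
\[
M^{p,q}\ni a\longmapsto a(x,D)\in B(\mathcal{M}^{r,s},M^{r,s}).
\]
To apply the Closed Graph Theorem, the key step is to verify that this map has closed graph: if $a_n\to a$ in $M^{p,q}$ and $a_n(x,D)\to A$ in $B(\mathcal{M}^{r,s},M^{r,s})$, I need to show $A=a(x,D)$ on $\cS$.

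For this, fix $f,g\in\cS(\R^d)$. The convergence $a_n(x,D)\to A$ in operator norm gives $\langle a_n(x,D)f,g\rangle\to\langle Af,g\rangle$. On the other hand, the integral representation of $a(x,D)$ yields
\[
\langle a_n(x,D)f,g\rangle=\langle a_n,G\rangle,\qquad G(x,\omega)=e^{-2\pi i x\omega}\overline{\hat{f}(\omega)}g(x),
\]
where $G\in\cS(\R^{2d})$ is fixed. Since $a_n\to a$ in $M^{p,q}\hookrightarrow\cS'(\R^{2d})$, we have $\langle a_n,G\rangle\to\langle a,G\rangle=\langle a(x,D)f,g\rangle$. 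Comparing the two limits shows $\langle Af,g\rangle=\langle a(x,D)f,g\rangle$ for all $f,g\in\cS$, hence $A$ and $a(x,D)$ agree on $\cS$, as desired.

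The Closed Graph Theorem then delivers a constant $C>0$ such that
\[
\|a(x,D)f\|_{M^{r,s}}\le C\|a\|_{M^{p,q}}\|f\|_{M^{r,s}},\qquad\forall a\in\cS(\R^{2d}),\ f\in\cS(\R^d),
\]
which is precisely \eqref{stimamod}. Invoking Proposition \ref{1-2} finishes the argument. I do not anticipate any real obstacle here: the closed-graph verification is routine once one notes the continuous embedding $M^{p,q}\hookrightarrow\cS'$, and the technical point about the non-density of $\cS$ is handled by replacing $M^{r,s}$ by its subspace $\mathcal{M}^{r,s}$ on the source side.
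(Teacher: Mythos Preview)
Your proposal is correct and follows exactly the route the paper intends: replace $W(L^r,L^s)$ by $M^{r,s}$ (and its Schwartz closure $\mathcal{M}^{r,s}$), verify the closed-graph property via the pairing $\langle a_n(x,D)f,g\rangle=\langle a_n,G\rangle$, apply the Closed Graph Theorem to obtain the uniform bound \eqref{stimamod}, and then invoke Proposition~\ref{1-2}. This is precisely the argument of Proposition~\ref{t41bis-1} transplanted to modulation spaces, which is what the paper's one-line proof asks you to do.
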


\section{Symbols in Wiener
amalgam spaces}\label{section6} A
natural question which can be arisen is
whether the boundedness of
pseudodifferential operators on $L^2$
or, more generally, on $\mpq$, can be
attained by widening  the symbol
Sj\"ostrand class $M^{\infty,1}$ to the
Winer amalgam space $W(\cF
L^1,L^\infty)$. An example of a
function belonging to $W(\cF
L^1,L^\infty)(\rd)\setminus
M^{\infty,1}(\rd)$ is the chirp
$\f(x)=e^{\pi i |x|^2}$, $x\in\rd$, see
Theorem 14 of \cite{BGOR} and
Proposition 3.2 of \cite{cordero}. The
multiplier operator $a(x,D)f(x)= e^{\pi
i |x|^2} f(x)$ is a pseudodifferential
operator with symbol $a(x,\o)=(e^{\pi i
|\cdot|^2}\otimes 1)\phas \in W(\cF
L^1,L^\infty)(\rdd)\setminus
M^{\infty,1}(\rdd)$ and it is bounded
on $M^{p,q}$ if and only if $p=q$, see
Proposition 7.1 of \cite{fio1}. \par
The subsequent Proposition
\ref{t27bis2} shows that generally
symbols in $W(\Fur L^1,L^\infty)$ do
not produce bounded operator even on
$L^2(\rd)$. However, symbols expressed
as tensor products
$a\phas=a_1(x)a_2(\o)$, are bounded on
$M^p(\rd)$, $1\leq p\leq\infty$ (and
hence on $L^2(\rd)$), as shown in the
next result.
\begin{proposition}\label{tensoriale} If $a\phas=a_1(x)a_2(\o)$, $a_i\in W(\cF
L^1,L^\infty)$, $i=1,2$, then
$a(x,D)$ is bounded on
$M^p(\rd)$, $1\leq p\leq
\infty$.
\end{proposition}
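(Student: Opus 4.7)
The plan is to exploit the tensor product structure of the symbol in order to factor $a(x,D)$ as a composition of a Fourier multiplier and a pointwise multiplier, and then to handle each factor separately via the pointwise product property for Wiener amalgam spaces (Lemma \ref{WA} (v)) together with the identification $M^p = W(\mathcal{F}L^p, L^p)$ from \eqref{Wienermod}.

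More precisely, inserting $a(x,\omega) = a_1(x) a_2(\omega)$ in \eqref{KN} gives the factorization
\[
a(x,D) f(x) = a_1(x)\,\bigl(\mathcal{F}^{-1}(a_2 \hat f)\bigr)(x) = M_{a_1}\bigl(T_{a_2} f\bigr),
\]
where $M_{a_1}$ denotes multiplication by $a_1$ and $T_{a_2}$ denotes the Fourier multiplier with symbol $a_2$. First I would deal with $T_{a_2}$. Since $M^p$ is Fourier invariant (as the modulation space $M^{p,p}$, see e.g.\ Section \ref{section2}), passing to the Fourier side turns the question into the boundedness of multiplication by $a_2$ on $\mathcal{F}^{-1} M^p = W(\mathcal{F}L^p, L^p)$. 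Then Lemma \ref{WA} (v), applied with $B_1 = \mathcal{F}L^1$, $C_1 = L^\infty$, $B_2 = B_3 = \mathcal{F}L^p$, $C_2 = C_3 = L^p$, yields
\[
W(\mathcal{F}L^1, L^\infty) \cdot W(\mathcal{F}L^p, L^p) \hookrightarrow W(\mathcal{F}L^p, L^p),
\]
since the local inclusion $\mathcal{F}L^1 \cdot \mathcal{F}L^p \hookrightarrow \mathcal{F}L^p$ follows from Young's inequality $L^1 \ast L^p \hookrightarrow L^p$ on the Fourier side, and $L^\infty \cdot L^p \hookrightarrow L^p$ is trivial. This gives $\|T_{a_2} f\|_{M^p} \lesssim \|a_2\|_{W(\mathcal{F}L^1, L^\infty)} \|f\|_{M^p}$.

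For the second factor $M_{a_1}$, the identification $M^p \asymp W(\mathcal{F}L^p, L^p)$ from \eqref{Wienermod} and the same application of Lemma \ref{WA} (v) give immediately
\[
\|M_{a_1} g\|_{M^p} \asymp \|a_1 g\|_{W(\mathcal{F}L^p, L^p)} \lesssim \|a_1\|_{W(\mathcal{F}L^1, L^\infty)}\,\|g\|_{W(\mathcal{F}L^p, L^p)} \asymp \|a_1\|_{W(\mathcal{F}L^1, L^\infty)}\,\|g\|_{M^p}.
\]
Combining the two estimates through the factorization yields the claim, with the uniform bound $\|a(x,D) f\|_{M^p} \lesssim \|a_1\|_{W(\mathcal{F}L^1, L^\infty)} \|a_2\|_{W(\mathcal{F}L^1, L^\infty)} \|f\|_{M^p}$.

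There is no real obstacle here beyond correctly identifying the right Wiener amalgam description of $M^p$; the argument is essentially a bookkeeping exercise once one observes that the tensor symbol splits the operator into a Fourier multiplier and a pointwise multiplier, both of which are covered by the pointwise product property of amalgam spaces. I would only flag, as a mild subtlety, that boundedness is meant in the sense of Section \ref{section4} (i.e.\ a priori estimate on $\mathcal{S}(\mathbb{R}^d)$) when $p = \infty$, so no density argument is needed.
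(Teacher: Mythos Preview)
Your proof is correct and follows essentially the same strategy as the paper: factor $a(x,D)$ as the pointwise multiplier $M_{a_1}$ composed with the Fourier multiplier $T_{a_2}$, and handle each piece via Wiener amalgam properties and the identification $M^p=W(\mathcal{F}L^p,L^p)$. The only minor difference is in the treatment of $T_{a_2}$: the paper writes it as convolution with $\mathcal{F}^{-1}a_2\in M^{1,\infty}$ and invokes the modulation space convolution relation $M^{1,\infty}\ast M^p\hookrightarrow M^p$ (Lemma~\ref{mo} (iv)), whereas you use Fourier invariance of $M^p$ to reduce $T_{a_2}$ to a second pointwise multiplication and apply Lemma~\ref{WA} (v) twice; the two routes are equivalent via $M^{p,q}=\mathcal{F}W(\mathcal{F}L^p,L^q)$.
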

\begin{proof} We have
\begin{equation*}
a(x,D)f(x)=
a_1(x)\cF^{-1}(a_2
\hat{f})(x)=a_1(x)[\cF^{-1}(a_2)\ast
f](x).
\end{equation*}
If $a_2\in W(\cF L^1,L^\infty)(\rd)$,
then $\cF^{-1}(a_2) \in
M^{1,\infty}(\rd)$ and
$\cF^{-1}(a_2)\ast f\in
M^{1,\infty}(\rd) \ast M^p(\rd)
\hookrightarrow M^p(\rd)$; see Lemma
\ref{mo} (iv). It remains to show that
the multiplier $U_{a_1} f(x)=a_1(x)
f(x)$ is bounded on $M^p$. This
immediately follows by the pointwise
products for Wiener amalgam spaces of
Lemma \ref{WA} (v). Indeed, $\cF
L^1\cdot \cF L^p=\cF (L^1\ast
L^p)\hookrightarrow \cF L^p$,
$L^\infty\cdot L^p \hookrightarrow
L^p$, so that, for $M^p=W(\cF
L^p,L^p)$, we have $W(\cF L^1,
L^\infty)\cdot W(\cF
L^p,L^p)\hookrightarrow W(\cF
L^p,L^p)$, as desired.
\end{proof}
\par
Observe that the previous
result does not hold if the
space $M^p$ is replaced by
$M^{p,q}$, $p\not=q$, the
counterexample being given by
the multiplier operator
$a(x,D)f(x)=e^{\pi i
|x|^2}f(x)$, which is not
bounded on $M^{p,q}$, $p\not=
q$, as discussed above.\par
We now come to a necessary
condition.
\begin{proposition}\label{t27bis}
Suppose that, for some $1\leq
p,q,r,s,r_1,r_2\leq\infty$,
$C>0$, either the estimate
\begin{equation}\label{stimacont27-2}
\|a(x,D)f\|_{W(L^{r_1},L^{r_2})}\leq
C \|a\|_{W(\Fur
L^p,L^q)}\|f\|_{W(L^{r},L^{s})},\quad
\forall a\in\cS(\R^{2d}),\
f\in\cS(\R^d),
\end{equation}
or the estimate
\begin{equation}\label{stimacont27-2-0}
\|L_\sigma
f\|_{W(L^{r_1},L^{r_2})}\leq
C \|\sigma\|_{W(\Fur
L^p,L^q)}\|f\|_{W(L^{r},L^{s})},\quad
\forall a\in\cS(\R^{2d}),\
f\in\cS(\R^d),
\end{equation}
holds. Then $q\leq r$.
\end{proposition}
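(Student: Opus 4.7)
The plan is to mirror the argument of Proposition \ref{t27}, replacing the $M^{p,q}$ norm on the symbol by the $W(\Fur L^p, L^q)$ norm. For the Kohn--Nirenberg case \eqref{stimacont27-2}, I would reuse verbatim the test functions $f_N(x) = h(Nx)$ and test symbols $a_N = \sum_{n \in \Lambda_N} b_n$ with $b_n(x,\omega) = (M_{-n}h_1)(x)(T_n h_2)(\omega)$ and the lattice $\Lambda_N$ of \eqref{reticolo0}. The lower bound $\|a_N(x,D) f_N\|_{W(L^{r_1},L^{r_2})} \gtrsim 1$ and the norm estimate $\|f_N\|_{W(L^r,L^s)} \asymp N^{-d/r}$ are already proved there, so the only new ingredient is the upper bound $\|a_N\|_{W(\Fur L^p, L^q)} \lesssim N^{d/q}$; inserting these three into \eqref{stimacont27-2} and letting $N \to \infty$ forces $q \leq r$.

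To derive this symbol bound, I would compute the STFT of $a_N$ with a tensor-product window $\Phi = \phi \otimes \phi$, $\hat\phi \in \cC^\infty_0(B(0,1))$. Using Lemma \ref{propstft} one obtains
\begin{equation*}
|V_\Phi b_n(z_1, z_2, \omega_1, \omega_2)| = |V_\phi h_1(z_1, \omega_1 + n)|\,|V_\phi h_2(z_2 - n, \omega_2)|,
\end{equation*}
so that $\|V_\Phi b_n(z,\cdot)\|_{L^p_\omega}$ is a Schwartz function of $(z_1, z_2 - n)$, concentrated near $z_2 = n$. Since the centres $n \in \Lambda_N$ are spaced by at least $4$ and the global component of $W(\Fur L^p, L^q)$ is $L^q_z$, the almost-orthogonality of well-separated Schwartz translates under $L^q$ (and the fact that at $q=\infty$ the sum stays bounded, matching $N^{d/\infty} = 1$) yields the desired $\lesssim N^{d/q}$.

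For the Weyl case \eqref{stimacont27-2-0}, I would test with the same $f_N$ but on the Weyl symbols $\sigma_N := \cU^{-1} a_N$, chosen so that $L_{\sigma_N} = a_N(x,D)$ by Theorem \ref{teo-scambio}, whence the output lower bound is unchanged. The main obstacle is to show $\|\sigma_N\|_{W(\Fur L^p, L^q)} \lesssim N^{d/q}$ despite the fact (the content of Section \ref{section6}) that $\cU^{-1}$ does not preserve $W(\Fur L^p, L^q)$ for $p \neq q$. The key is that on the support of $\hat h_1(\zeta_1+n)$ one has $|\zeta_1+n|\leq 1$, so the chirp factorises as $e^{-\pi i \zeta_1\zeta_2} = e^{-\pi i n\zeta_2}\,e^{-\pi i(\zeta_1+n)\zeta_2}$; taking inverse Fourier transform and using the translation/modulation duality yields the explicit identity
\begin{equation*}
\cU^{-1} b_n(x_1, x_2) = e^{-2\pi i n x_1}\, G(x_1, x_2 + n/2),\qquad G := \Fur^{-1}\bigl(\hat h_1(\zeta_1)\hat h_2(\zeta_2) e^{-\pi i \zeta_1\zeta_2}\bigr),
\end{equation*}
with $G$ a fixed Schwartz function independent of $n$. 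Thus $\sigma_N$ is a sum of time--frequency shifts of $G$ centred at the well-separated phase-space points $(z_1, z_2, \omega_1, \omega_2) = (0, -n/2, -n, 0)$ for $n \in \Lambda_N$, and the same STFT/$\ell^q$ argument as in the Kohn--Nirenberg case yields $\|\sigma_N\|_{W(\Fur L^p, L^q)} \lesssim N^{d/q}$; the conclusion $q \leq r$ follows.
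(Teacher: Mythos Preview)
Your argument is essentially correct (two inconsequential sign slips: the chirp factorisation should read $e^{-\pi i\zeta_1\zeta_2}=e^{\pi i n\zeta_2}e^{-\pi i(\zeta_1+n)\zeta_2}$, giving $\cU^{-1}b_n(x_1,x_2)=e^{-2\pi inx_1}G(x_1,x_2-n/2)$), but the route differs from the paper's in both halves.

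For the Kohn--Nirenberg case the paper does not keep \eqref{h12}; it swaps to the physical-side condition ${\rm supp}\,h_1,{\rm supp}\,h_2\subset B(0,1)$ and takes a window $\Phi=\phi\otimes\phi$ with $\phi$ (not $\hat\phi$) supported in $B(0,1)$. Then $V_\Phi b_n$ has \emph{compact} support in the $z$-variable and the $L^q_z$ norm splits exactly into an $\ell^q$ sum over $\Lambda_N$, so no Schwartz almost-orthogonality is needed. For the Weyl case the paper avoids $\cU^{-1}$ altogether: it tests directly on the explicit Weyl symbols $\sigma_N=\sum_{n\in\Lambda_N}(M_{-2n}h_1)(x)(T_nh_2)(\omega)$ (note the $-2n$), again with ${\rm supp}\,h_j\subset B(0,1)$, computes the Weyl kernel $K_N(x,y)=\sum_n e^{-4\pi i ny}h_1\bigl(\tfrac{x+y}{2}\bigr)\widehat{h_2}(y-x)$, and rederives the lower bound by the same phase argument as in Proposition~\ref{t27}; the $W(\Fur L^p,L^q)$ estimate is then identical to the Kohn--Nirenberg half.

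The paper's approach is slightly cleaner (exact disjoint supports rather than rapid decay), while yours has the virtue of recycling the lower bound from Proposition~\ref{t27} without touching the Weyl kernel, and of making explicit why the failure of $\cU$-invariance of $W(\Fur L^p,L^q)$ (Proposition~\ref{6punto4}) does not obstruct the counterexample.
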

\begin{proof}
We first suppose that
\eqref{stimacont27-2} holds
true. We test that estimate
on the same families of
functions and symbols as in
the proof of Proposition
\ref{t27}, but with
\eqref{h12} replaced by
\begin{equation}\label{h21}
{\rm supp}\,{h_1}\subset
B(0,1),\qquad {\rm
supp}\,{h_2}\subset B(0,1),
\end{equation}
(the other conditions being
unchanged).\par Then,
\eqref{27-2} and \eqref{27-5}
still hold, because in their
proof we did not use
\eqref{h12}. On the other
hand, we can prove that
\begin{equation}\label{27-6}
\|a_N\|_{W(\Fur
L^p,L^q)}\lesssim N^{d/q},
\end{equation}
 by using the same
 arguments in the proof of
 \eqref{27-4}, with the roles
 of $h_1,h_2$ replaced by
 $\widehat{h_2}$ and $\widehat{h_1}$,
 respectively. Precisely, we
 now choose a window $\Phi=\f\otimes\f$
 with $\f\in\cS(\R^d)$ supported in
 $B(0,1)$. Then the functions
 $V_\Phi
 b_n(z_1,z_2,\omega_1,\omega_2)$
 vanish unless $z_1\in
 B(-n,2)$, $z_2\in B(0,2)$, so
 that they have disjoint
 supports, as well as
 $\|V_\Phi
 b_n(z_1,z_2,\cdot,\cdot)\|_p$.
 Hence one deduces that
 \begin{align*}
 \|a_n\|_{W(\Fur
 L^p,L^q)}&\asymp \left(\int_{\R^{2d}}
\|\sum_{n\in\Lambda_N}V_\Phi
 b_n(z_1,z_2,\cdot,\cdot)\|_{p}^q\,d
z_1\,dz_2\right)^{1/q}\\
&=
 \left(\sum_{n\in\Lambda_N}
\int_{B(-n,2)}\int_{B(0,2)}
\|V_\Phi
 b_n(z_1,z_2,\cdot,\cdot)\|_{p}^q\,
 dz_1\,dz_2 \right)^{1/q}.
\end{align*}
On the other hand, by Young's
inequality, we have
\[
\|V_{\Phi}{b_n}(z_1,z_2,\cdot,\cdot)\|_{p}\leq
\|\widehat{b_n}\|_p,
\]
which is independent of $n$. Hence
\eqref{27-6} follows.\par We now
suppose that the estimate
\eqref{stimacont27-2-0} holds. Since
the arguments are similar to those just
used, we only sketch the main point of
the proof. We test the estimate
\eqref{stimacont27-2-0} on the family
of functions $f_N(x)=h(Nx)$, where $h$
is a smooth real-valued function, with
$h\geq0$, $h(0)=1$, supported in a ball
$B(0,\epsilon)$, for a sufficiently
small $\epsilon$, and symbols
\[
\sigma_N(x,\omega)=\sum_{n\in\Lambda_N}b_n(x,\o),\
{\rm where}\ b_n(x,\o)=(M_{-2n}h_1)(x)
(T_n h_2)(\o).
\]
The lattice $\Lambda_N$ is defined in
\eqref{reticolo0} and $h_1,h_2$ are two
Schwartz functions in $\R^d$ such that
$h_1$ and $\widehat{h}_2$ are real
valued, with $h_1(0)=1$,
$\widehat{h_2}(0)=1$, and satisfying
\eqref{h21}. By using the definition
\eqref{equiv1} we see that $L_\sigma$
has integral kernel
\[
K(x,y)=\Fur_2^{-1}\sigma\left(\frac{x+y}{2},x-y\right)=
\sum_{n\in\Lambda_N} e^{-4\pi in y}
h_1\left(\frac{x+y}{2}\right)\widehat{h_2}(y-x).
\]
Hence, by arguing as in the
proof of \eqref{27-1} one
obtains, for a suitable
$\delta>0$,
\[
|L_{\sigma_N} f_N(x)|\gtrsim
1,\ {\rm for}\ x\in
B(0,\delta),
\]
which implies
\[
\|L_{\sigma_N}
f_N\|_{W(L^{r_1},L^{r_2})}\gtrsim
1.
\]
The arguments in the
first part of the present
proof,  with essentially no changes, show that
\[
\|\sigma_N\|_{W(\Fur
L^p,L^q)}\lesssim N^{d/q}.
\]
Combining these estimates
with
$\|f_N\|_{W(L^r,L^s)}\lesssim
N^{-d/r}$ and letting
$N\to+\infty$ give the
desired conclusion.
\end{proof}
\begin{proposition}\label{t27bis2}
Suppose that, for some $1\leq
p,q,r,s\leq\infty$, every
symbol $a\in W(\Fur L^p,L^q)$
gives rise to a bounded
operator on $W(L^r,L^s)$.
Then $q\leq r$. The same
happens if one replaces the
Kohn-Nirenberg operator
$a(x,D)$ by the Weyl one
$L_a$.
\end{proposition}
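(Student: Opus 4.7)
The plan is to deduce the uniform estimate \eqref{stimacont27-2} (respectively \eqref{stimacont27-2-0} in the Weyl case) from the qualitative boundedness hypothesis via the Closed Graph Theorem, and then conclude by invoking Proposition \ref{t27bis}. This follows the pattern of Propositions \ref{t41bis-1} and \ref{1-3} already established in the paper.

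Concretely, let $\mathcal{W}(L^r,L^s)$ denote the closure of $\cS(\R^d)$ in $W(L^r,L^s)$. The hypothesis states that for every $a\in W(\Fur L^p,L^q)$ there is a constant $C_a$ with $\|a(x,D)f\|_{W(L^r,L^s)}\leq C_a\|f\|_{W(L^r,L^s)}$ for all $f\in\cS(\R^d)$, so $a(x,D)$ extends by density to a bounded operator $\mathcal{W}(L^r,L^s)\to W(L^r,L^s)$. Consider the map
\[
\Psi\colon W(\Fur L^p,L^q)\ni a\longmapsto a(x,D)\in B(\mathcal{W}(L^r,L^s),W(L^r,L^s)).
\]
To check that $\Psi$ has closed graph, suppose $a_n\to a$ in $W(\Fur L^p,L^q)$ and $a_n(x,D)\to A$ in operator norm. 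For $f,g\in\cS(\R^d)$ we have $\langle a_n(x,D)f,g\rangle=\langle a_n,G\rangle$ with $G(x,\omega)=e^{-2\pi ix\omega}\overline{\hat f(\omega)}g(x)$ a fixed Schwartz function; since convergence in $W(\Fur L^p,L^q)$ entails convergence in $\cS'(\R^{2d})$, $\langle a_n,G\rangle\to\langle a,G\rangle=\langle a(x,D)f,g\rangle$. Combined with $\langle a_n(x,D)f,g\rangle\to\langle Af,g\rangle$, this gives $Af=a(x,D)f$ on $\cS(\R^d)$, and by density on all of $\mathcal{W}(L^r,L^s)$.

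The Closed Graph Theorem then yields a constant $C>0$ such that
\[
\|a(x,D)f\|_{W(L^r,L^s)}\leq C\|a\|_{W(\Fur L^p,L^q)}\|f\|_{W(L^r,L^s)},\qquad \forall a\in\cS(\R^{2d}),\ f\in\cS(\R^d).
\]
This is precisely hypothesis \eqref{stimacont27-2} of Proposition \ref{t27bis} with $r_1=r$, $r_2=s$, so that proposition gives $q\leq r$. The Weyl case is entirely analogous: one uses hypothesis \eqref{stimacont27-2-0} instead, and in the closed-graph step expresses $\langle L_{a_n}f,g\rangle=\langle a_n,W(g,f)\rangle$ with the Schwartz cross-Wigner distribution $W(g,f)$ in place of $G$.

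No serious obstacle is anticipated, since the heavy lifting has already been performed in Proposition \ref{t27bis}. The only point requiring mild care is the non-density of $\cS(\R^d)$ in $W(L^r,L^s)$ when $r=\infty$ or $s=\infty$, which is circumvented by working on the closure $\mathcal{W}(L^r,L^s)$, exactly as in Proposition \ref{t41bis-1}.
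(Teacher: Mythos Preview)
Your proposal is correct and follows exactly the approach the paper takes: the paper's proof simply says the result follows from Proposition \ref{t27bis} by arguing as in the proof of Proposition \ref{t41bis-1}, which is precisely the Closed Graph Theorem argument you have written out (including the use of $\langle a_n(x,D)f,g\rangle=\langle a_n,G\rangle$ and working on the closure $\mathcal{W}(L^r,L^s)$). Your handling of the Weyl case via the cross-Wigner pairing is also the natural adaptation.
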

\begin{proof}
The result follows from
Proposition \ref{t27bis} by
arguing as in the proof of
Proposition \ref{t41bis-1}.
\end{proof}

As anticipated in the Introduction, we
address now to the problem of the
invariance of the Wiener amalgam spaces
$W(\Fur L^p,L^q)$ under the action of
the operator $\mathcal{U}$ in Theorem
\ref{teo-scambio}, which expresses the
Kohn-Nirenberg symbol of an operator in
terms of the Weyl one. The lack of
invariance, expressed by the following
result, justifies the fact that the
necessary conditions in this section
were proved for both Kohn-Nirenberg and
Weyl quantizations.
\begin{proposition}\label{6punto4}
Let $1\leq p,q\leq\infty$. If $p\not=q$, the operator $\mathcal{U}$ in
Theorem \ref{teo-scambio}
does not map $W(\Fur L^p,L^q)$ into itself.
\end{proposition}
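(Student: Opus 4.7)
The operator $\mathcal{U}$ is, by definition, the Fourier multiplier with symbol the chirp $\chi(\omega_1,\omega_2)=e^{\pi i\omega_1\omega_2}$. Since Section \ref{section2} asserts $M^{p,q}(\mathbb{R}^{2d})=\mathcal{F}\,W(\mathcal{F}L^p,L^q)(\mathbb{R}^{2d})$ as a topological isomorphism, conjugation by $\mathcal{F}$ reduces the boundedness of $\mathcal{U}$ on $W(\mathcal{F}L^p,L^q)(\mathbb{R}^{2d})$ to the boundedness of the pointwise multiplier $M_\chi:f\mapsto \chi\,f$ on $M^{p,q}(\mathbb{R}^{2d})$. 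Hence it suffices to show that $M_\chi$ is unbounded on $M^{p,q}(\mathbb{R}^{2d})$ when $p\neq q$.

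For the second step I would exhibit an explicit counterexample that exploits the phase-space-spreading nature of $\chi$. Consider the tensor-type symbol
\[
\sigma(x_1,x_2)=e^{\pi i|x_1|^2}\,g(x_2),\qquad g\in\mathcal{S}(\mathbb{R}^d).
\]
Since $e^{\pi i|\cdot|^2}\in W(\mathcal{F}L^1,L^\infty)(\mathbb{R}^d)$ (as recalled at the beginning of Section \ref{section6}), a tensor-product argument using the pointwise-product estimates of Lemma \ref{WA}\,(v) shows $\sigma\in W(\mathcal{F}L^p,L^q)(\mathbb{R}^{2d})$ for a non-trivial range of exponents including $(p,q)=(1,\infty)$. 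Next compute $\mathcal{U}\sigma=\mathcal{F}^{-1}(\chi\,\widehat{\sigma})$ using the Fresnel identity $\mathcal{F}(e^{\pi i|x|^2})(\omega)=c_d\,e^{-\pi i|\omega|^2}$. Completing the square in the $\omega_1$-variable and reducing the resulting complex Gaussian integral, one finds
\[
\mathcal{U}\sigma(x_1,x_2)=c\,e^{i\Theta(x_1,x_2)}\,\tilde g\!\bigl(x_2+x_1/2\bigr),
\]
for a real quadratic phase $\Theta$ and some Schwartz function $\tilde g$ (explicitly the convolution of $g$ with a chirp). In particular $|\mathcal{U}\sigma(x_1,x_2)|=|\tilde g(x_2+x_1/2)|$ is concentrated on a tubular neighborhood of the hyperplane $x_2=-x_1/2$ but does not decay transversely along this hyperplane. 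Accordingly, a direct STFT computation (using that modulus-of-STFT is translation-invariant and that the line-concentrated profile is not integrable along the line direction) shows that $\|\mathcal{U}\sigma\|_{W(\mathcal{F}L^p,L^q)}=\infty$ for suitable $(p,q)$ with $p\neq q$. Varying the roles of $x_1$ and $x_2$ in the above construction, and using Lemma \ref{WA}\,(iv) together with a complex-interpolation argument against the trivial boundedness on $L^2=W(\mathcal{F}L^2,L^2)$, one then covers the entire range $p\neq q$.

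The main obstacle is the detailed verification that $\mathcal{U}\sigma\notin W(\mathcal{F}L^p,L^q)$ while $\sigma\in W(\mathcal{F}L^p,L^q)$. Pure Gaussian test functions turn out to be preserved in modulus by $M_\chi$, so the counterexample must exploit symbols which, like $e^{\pi i|x_1|^2}\otimes g$, have a direction of non-decay; the key estimate, after reducing to the multiplier on $M^{p,q}$ and expressing $\mathcal{U}\sigma$ via the Fresnel integral, is that the Wiener amalgam norm $\|\mathcal{U}\sigma\|_{W(\mathcal{F}L^p,L^q)}$ diverges precisely because the quadratic coupling $\omega_1\omega_2$ produces, after the inverse Fourier transform, a line-concentrated function whose $L^q$-global component blows up exactly in the asymmetric regime $p\neq q$.
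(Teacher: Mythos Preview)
Your reduction of the problem to the unboundedness of the pointwise multiplier $M_\chi$ on $M^{p,q}$ is correct, but the explicit counterexample and the extension strategy both break down.

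First, the test symbol $\sigma(x_1,x_2)=e^{\pi i|x_1|^2}g(x_2)$ lies in $W(\mathcal{F}L^p,L^q)(\mathbb{R}^{2d})$ \emph{only} when $q=\infty$: with a tensor window the local $\mathcal{F}L^p$ norm of $\sigma$ at $(x_1^0,x_2^0)$ factors as $A(x_1^0)B(x_2^0)$, and $A(x_1^0)$ is a nonzero constant (the chirp is invariant, up to a modulation, under translation), so the global $L^q$ norm diverges for $q<\infty$. But for $q=\infty$ your computed image $\mathcal{U}\sigma=c\,e^{\pi i|x_1|^2}\tilde g(x_2+x_1/2)$ is the pointwise product of two functions each in $W(\mathcal{F}L^1,L^\infty)$ --- the chirp, and the composition of a Schwartz function with a linear map (whose localized cutoffs depend on the single parameter $x_1^0/2+x_2^0$ and decay as it goes to infinity). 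Since $W(\mathcal{F}L^1,L^\infty)$ is an algebra by Lemma~\ref{WA}\,(v), $\mathcal{U}\sigma\in W(\mathcal{F}L^1,L^\infty)\subset W(\mathcal{F}L^p,L^\infty)$, so the intended counterexample fails precisely at the only exponent where $\sigma$ belongs to the space.

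Second, the interpolation argument goes the wrong way: complex interpolation transfers \emph{boundedness}, not unboundedness, so anchoring at the bounded endpoint $L^2=W(\mathcal{F}L^2,L^2)$ cannot propagate a failure to the full range $p\neq q$. Duality (Lemma~\ref{WA}\,(iv)) would only swap $(p,q)$ with $(p',q')$, which still does not cover the whole off-diagonal.

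The paper's proof avoids constructing any explicit counterexample. It identifies $\mathcal{U}$ as a metaplectic operator and invokes the mapping property $\mathcal{U},\,\mathcal{U}^{-1}:W(\mathcal{F}L^q,L^p)\to W(\mathcal{F}L^p,L^q)$ for $p\leq q$ from \cite{cordero2}; then an abstract contradiction argument shows that self-mapping of $W(\mathcal{F}L^p,L^q)$ would force the false inclusion $W(\mathcal{F}L^q,L^p)\subseteq W(\mathcal{F}L^p,L^q)$ (or the reverse, when $p>q$). If you want a concrete failure at $(p,q)=(1,\infty)$, the Remark following the proposition gives one: $\sigma=\mathcal{U}^{-1}\delta\in W(\mathcal{F}L^1,L^\infty)$ but $\mathcal{U}\sigma=\delta\notin W(\mathcal{F}L^1,L^\infty)$.
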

\begin{proof}
Consider the symmetric matrix
$
B=\frac{1}{2}\left[\begin{matrix}
0&I_d\\
I_d&0\end{matrix}\right], $ and the
symplectic matrix
$\mathcal{A}=\left[\begin{matrix}
I_{2d}&B\\
0&I_{2d}\end{matrix}\right]$. It
follows from Theorem 4.51 of
\cite{folland} that the operator
$\mathcal{U}$ is exactly the
metaplectic operator associated with
the matrix $\mathcal{A}$. We write
$\mathcal{U}=\mu(\mathcal{A})$. Hence,
as a consequence of Theorem 4.1 of
\cite{cordero2}, we deduce that
$\mathcal{U}$ and
$\mathcal{U}^{-1}=\mu(\mathcal{A}^{-1})$
map $W(\Fur L^q,L^p)$ into $W(\Fur
L^p,L^q)$ continuously
 for every $1\leq p\leq
 q\leq\infty$.
\par
Now, assume $p<q$. Let $f$ be any
distribution in $W(\Fur L^q,L^p)$;
therefore, by the boundedness result we have
just recalled, $\mathcal{U}^{-1}f\in
W(\Fur L^p,L^q)$. Suppose, by
contradiction, that $\mathcal{U}$ maps
$W(\cF L^p,L^q)$ into itself. Then one
would obtain
$f=\mathcal{U}\mathcal{U}^{-1} f\in
W(\Fur L^p,L^q)$, and therefore the
inclusion $W(\Fur L^q,L^p)\subseteq
W(\Fur L^p,L^q)$, which is false.\par
Suppose now $p>q$. Assume, by
contradiction, that for every $f\in
W(\Fur L^p, L^q)$ it turns out that
$\mathcal{U}f\in W(\Fur L^p, L^q)$.
Then we would have
$f=\mathcal{U}^{-1}\mathcal{U}f\in
W(\Fur L^q, L^p)$, and therefore the
inclusion $W(\Fur L^p,L^q)\subseteq
W(\Fur L^q,L^p)$, which is false.
\end{proof}
\begin{remark}\rm
A concrete example of a Weyl
symbol $\sigma\in W(\Fur
L^1,L^\infty)$ such that the
corresponding Kohn-Nirenberg
symbol $a=\mathcal{U}\sigma$
does not belong to $W(\Fur
L^1,L^\infty)$ is provided by
$\sigma=\mathcal{U}^{-1}\delta$;
therefore $a=\delta$. To see
this, observe, first of all,
that the Dirac distribution
$\delta$ belongs to $W(\Fur
L^\infty,L^1)$. Indeed, for a
given window
$g\in\mathcal{S}(\R^d)\setminus\{0\}$,
\[
\|\delta\|_{W(\Fur
L^\infty,L^1)}=\int\|\widehat{\delta
T_x \overline{g}}\|_\infty
dx=\int|g(-x)|dx<\infty.
\]
Hence, by
Theorem 4.1 of
\cite{cordero2}, we infer $\sigma\in
W(\Fur L^1,L^\infty)$. On the
other hand, it is clear that
$\delta$ does not belong to
$W(\Fur L^1,L^\infty)$ (which
consists only of continuous
functions).
\end{remark}

\end{document}